\documentclass{amsart}
\usepackage{amssymb,amsmath}
\usepackage{graphicx}
\usepackage{subfigure}
\usepackage{upgreek}
\usepackage{tikz}
\usepackage{tikz-cd}
\usepackage{hyperref}
\usepackage{mathtools}
\usetikzlibrary{matrix}
\usepackage{cmap}
\usepackage[T1]{fontenc}

\usepackage[utf8]{inputenc}
\usetikzlibrary{matrix}
\usepackage[all]{xy}
\usepackage{optparams,eurosym}
\usepackage{amsthm}
\usepackage{amsmath,amssymb}
\usepackage{dbnsymb}
\headheight=7pt         \topmargin=14pt
\textheight=574pt       \textwidth=445pt
\oddsidemargin=18pt     \evensidemargin=18pt 
\newcommand{\rp}[1]{\mathbb{RP}^{#1}}
\newcommand{\brac}[1]{\langle{#1}\rangle}
\newcommand{\f}{\mathbb{F}_2}
\newcommand{\un}{v_+}
\newcommand{\coun}{v_-}

\newtheorem{theorem}{Theorem}[section]

\newtheorem{lemma}[theorem]{Lemma}
\newtheorem{proposition}[theorem]{Proposition}
\newtheorem{definition}[theorem]{Definition}
\newtheorem{non-theorem}{Non-Theorem}

\newtheorem*{remark}{Remark}

\numberwithin{equation}{subsubsection}

\begin{document}

	\title{Khovanov-type homologies of null homologous links in $\mathbb{RP}^3$}
	\author{Daren Chen}
	
		\maketitle

		\begin{abstract} Let $L$ be a null homologous link in $\mathbb{RP}^3$. We define Khovanov-type homologies of $L$ which depend on an extra input $\alpha = (V_0,V_1,f,g)$ consisting of two graded vectors spaces and two maps between them. With some specific choice of $\alpha = \alpha_{APS}$, we recover the categorification of the Kauffman bracket due to Asaeda-Przytycki-Sikora. With another choice of $\alpha = \alpha_{HF}$, we construct a spectral sequence from our theory converging to the Heegaard Floer homology of the even branched double cover of $\mathbb{RP}^3$.
	\end{abstract}
		
	\section{Introduction}

	In \cite{MR2113902}, Asaeda, Przytycki and Sikora extended the original construction of Khovanov homology in \cite{MR1740682} to links in interval-bundles over surfaces, categorifying the Kauffman bracket. In particular, their construction gives a homology theory for links in $\rp{3}$, by viewing $\rp{3} \backslash \{*\}$ as the twisted $I$-bundle over $\rp{2}$. For $I$-bundles over non-orientable surfaces like $\rp{2}$, their theory was defined with $\mathbb{F}_2$ coefficients. In \cite{MR3189291}, Gabrov{\v{s}}ek extended the definition for links in $\rp{3}$ to $\mathbb{Z}$ coefficients by fixing a sign convention. 
	
	In the first half of this paper, we are going to generalize the construction in  \cite{MR2113902}  to get a family of Khovanov-type link homologies $\widetilde{\mathit{Kh}}^{\alpha}(L)$ for null homologous links in $\rp{3}$ with  $\mathbb{F}_2$ coefficients. Our homology theory depends on an extra input $\alpha = (V_0,V_1,f,g)$ called a \textbf{dyad},  consisting of two graded vector spaces $V_0$, $V_1$ and maps $f:V_0\rightarrow V_1$, $g:V_1 \rightarrow V_0$ between them such that $f \circ g =0, \,\,g\circ f =0$. 
	
	\begin{theorem}
		For each dyad $\alpha$, the homology $\widetilde{\mathit{Kh}}^{\alpha}(L)$ is an invariant of null homologous links in $\rp{3}$.
		
	\end{theorem}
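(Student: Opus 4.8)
The plan is the standard one for a Khovanov-type theory: exhibit the invariance as invariance of the chain homotopy type of the underlying complex $\widetilde{\mathit{CKh}}^{\alpha}(D)$ under a complete generating set of diagrammatic moves, and verify them one at a time. First I would fix a diagram model: since $\rp{3}\backslash\{*\}$ is the orientation $I$-bundle over $\rp{2}$, a null homologous link avoiding $*$ projects to a diagram $D$ in $\rp{2}$ (equivalently, a disk with antipodal boundary identification), and two such diagrams present isotopic links if and only if they differ by a finite sequence of the three classical Reidemeister moves $R1,R2,R3$ performed inside a disk, together with the extra crosscap move $R4$ (and its variants) that slides a strand past the antipodal identification; I would cite Drobotukhina's diagrammatic calculus here. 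Over $\f$ there are no signs to track, so one only has to remark in passing that $\widetilde{\mathit{CKh}}^{\alpha}(D)$ is independent of the ordering of the crossings of $D$ and of the auxiliary data distinguishing the two roles of $V_0$ and $V_1$; this is also where the null homologous hypothesis enters, since it guarantees that every resolution of $D$ contains an even number of essential circles, so that this data exists and is transported coherently through all the moves. Because homology is a chain homotopy invariant, it then remains to produce, for each move, a chain homotopy equivalence between the complexes of the two diagrams.

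The two tools I would use throughout are \emph{delooping} and \emph{Gaussian elimination}, in the form familiar from the work of Bar-Natan and Morrison: a state diagram containing a null homotopic circle is chain homotopy equivalent to a direct sum of two grading-shifted copies of the same diagram with that circle deleted, the equivalence being assembled from the unit and counit of the Frobenius algebra that the construction puts on trivial circles; and any entry of a differential which is an isomorphism may be cancelled, at the cost of correcting the remaining entries by the usual zig-zag term.

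For $R1,R2,R3$ this becomes essentially automatic. Each of these moves is supported in a disk $\Delta\subset\rp{2}$ disjoint from the crosscap, and every state circle contained in $\Delta$ is null homotopic in $\rp{2}$; hence every circle that is created, destroyed or rerouted by the move is a trivial circle carrying the standard Frobenius algebra, and the dyad maps $f,g$ never appear in the relevant local tensor factor. The computation is therefore verbatim the one in Khovanov's original setting: $R1$ follows by delooping the new trivial circle and cancelling through the (co)unit, $R2$ by Gaussian elimination of an acyclic pair of summands, and $R3$ (after delooping) by a composite of $R2$-type cancellations. I would present this once, stressing only that no step of it sees the global topology of $\rp{2}$.

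The real content, and the step I expect to be the main obstacle, is the crosscap move $R4$. Here the circle types genuinely change: pushing a strand through the crosscap can turn a trivial circle into an essential one, or change the number of essential circles by two, so the saddle maps attached to the two resolutions of the crossing involved are built out of $f$ and $g$ rather than out of the Frobenius structure. The plan is to resolve that crossing, deloop whichever trivial circles appear, and recognize one of the two resulting subcomplexes as chain homotopy trivial so that it can be Gaussian-eliminated; the crucial point is that the composites of $f$ and $g$ occurring in the cancelling isomorphism and in the zig-zag correction terms collapse precisely because $f\circ g=0$ and $g\circ f=0$, so that the two identities defining a dyad are exactly what make this move work. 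Along the way I would need a short lemma describing how a neighborhood of the crosscap couples to the rest of the diagram before and after the slide, together with a check that the even parity of the number of essential circles is preserved throughout (so that the entire argument stays inside the setting in which $\widetilde{\mathit{CKh}}^{\alpha}$ is defined). Assembling the chain homotopy equivalences obtained from $R1,R2,R3,R4$ and the variant crosscap moves then yields the asserted invariance of $\widetilde{\mathit{Kh}}^{\alpha}(L)$.
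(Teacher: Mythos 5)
Your overall frame (reduce to a Reidemeister calculus for diagrams in $\rp{2}$ and produce chain homotopy equivalences move by move) is the paper's frame too, and your treatment of R-I--R-III is essentially the paper's: the moves are supported in a disk, the circles they touch are trivial, and the cancellation only uses the standard (co)multiplication maps, not the edges of the cube that involve $f$ and $g$. But the step you identify as ``the real content'' rests on a misconception. The two extra moves across the boundary identification do not create or destroy crossings; each resolution of the diagram before the move is isotopic in $\rp{2}$ to the corresponding resolution after it, so the cube of resolutions --- circles, bifurcation types, and all --- is \emph{literally unchanged}, and no delooping, Gaussian elimination, or use of $f\circ g=g\circ f=0$ is needed or even available there. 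The dyad axioms are consumed elsewhere: they are what make $d^2=0$ for the squares of the cube containing $1\rightarrow1$ bifurcations (which occur at ordinary crossings of the diagram, as part of the differential, not as part of any Reidemeister move). Relatedly, your picture of resolutions containing essential circles is off: for a null homologous link every circle of every resolution is nullhomotopic (two essential embedded circles in $\rp{2}$ must intersect, so a null homologous disjoint union contains none), and the datum that distinguishes $V_0$ from $V_1$ is not a count of essential circles but the parity $e_s(P)$ of the number of circles whose disk side contains an auxiliary point $P$.

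That auxiliary point is the genuine gap in your proposal. The complex depends on the choice of $P$: moving $P$ across the diagram replaces the dyad $\alpha=(V_0,V_1,f,g)$ by its dual $(V_1,V_0,g,f)$, and the two parity classes of basepoints are permuted in the region swept out by a Reidemeister move. To get an invariant of the link rather than of the pair $(L,P)$ one must single out a canonical class; the paper does this by showing $\widetilde{\mathit{CKh}}^{P,\alpha}$ depends on $P$ only through the parity of the linking number of $L$ with the circle $C_P$ (defined via a Seifert surface, which exists exactly because $L$ is null homologous), and then always taking $P$ with even linking number and not swept by the move. Your phrase ``transported coherently through all the moves'' is precisely the assertion that needs this argument; indeed, for links that are nontrivial in $H_1(\rp{3})$ the analogous construction only yields an invariant of the link together with a choice of $P$, so the issue is not cosmetic. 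Finally, the reduced complex also carries a marked point $M$ on the diagram (the marked circle carries $V_i$ rather than $V$), and independence of $M$ requires a separate change-of-variables chain isomorphism before one may assume, as you implicitly do, that every circle touched by a move carries the Frobenius algebra $V$.
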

	
	With a specific choice of the dyad $\alpha_{APS} = (V,V,0,0)$ where $V=\langle\un,\coun \rangle$, we recover a reduced version of the homology defined in \cite{MR2113902} for null homologous links in $\rp{3}$. The novelty of our construction is that we associate to each smoothing $L_s$ of the link $L$ an extra parameter $e_s(P) \in \left\{0,1\right\}$, and the vector space we associate to the smoothing $L_s$ in the chain complex $\widetilde{\mathit{CKh}}^{\alpha}(L)$ will be different depending on the values of $e_s(P)$. Here $P$ is a point in the complement of the link projection in $\rp{2}$, and $e_s(P)$ counts the number of circles mod $2$ in the smoothing $L_s$ which encircles $P$. The Euler characteristic of the homology theory $\widetilde{\mathit{Kh}}^{\alpha}(L)$ is a linear combination of the even and odd Jones polynomials of $L$, with coefficients given by the graded dimension of $V_0$ and $V_1$ respectively. We will also introduce an unreduced version $\textit{Kh}^{\alpha}(L)$ of the homology, and discuss briefly what happens to other links in $\rp{3}$ which are non-trivial in $H_1(\rp{3},\mathbb{Z})$.

	In the second half of the paper, we will relate the Heegaard Floer homology of a branched double cover of $\rp{3}$ over a null homologous link $L$ to the Khovanov-type homology $\widetilde{\mathit{Kh}}^{\alpha_{\textit{HF}}} (m(L))$ for another choice of the dyad $\alpha_{HF} = (W,\bar{V},f,g)$. In this case, $W = \langle a,b,c,d \rangle$, $\bar{V} = \langle \bar{v}_+,\bar{v}_-\rangle$, and
	\begin{align*}
		f(a)=f(d)=0,\,\,\,\,\,\,\, &f(b)=f(c)=\overline{v}_- \\
		g(\overline{v}_-) = 0,\,\,\,\,\,\,&g(\overline{v}_+)=b+c.
	\end{align*}
   For a link $L$ in $S^3$, we can form the branched double cover $\Sigma(S^3,L)$ of $S^3$. In \cite{MR2141852}, Ozsv{\'a}th and Szab{\'o} defined a spectral sequence which converges to $\widehat{HF}(\Sigma(S^3,L))$ with $\mathbb{F}_2$ coefficients.  The $E^2$ term of this spectral sequence gives the reduced Khovanov homology of the mirror link $m(L)$ of $L$. We consider a extension of this construction for null homologous links in $\rp{3}$, and obtain the following result.
	
	\begin{theorem}
			Let $L$ be a null homologous link in $\rp{3}$. There is a spectral sequence whose $E^2$ term consists of the Khovanov-type homology $\widetilde{\mathit{Kh}}^{\alpha_{\textit{HF}}}(m(L))$ of the mirror of $L$ with the dyad $\alpha_{HF} = (W,\overline{V},f,g)$, which converges to the Heegaard Floer homology $\widehat{HF}(\Sigma_0(\rp{3},L))$ of the even branched double cover $\Sigma_0(\rp{3},L)$ of $\rp{3}$. 
	\end{theorem}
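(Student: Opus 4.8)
The plan is to adapt the construction of Ozsv\'ath and Szab\'o in \cite{MR2141852}, replacing $S^3$ throughout by $\rp{3}$ and ordinary branched double covers by the even branched double cover $\Sigma_0$. Fix a diagram $D$ of $L$ in $\rp{2}$ with crossings $c_1,\dots,c_n$, and for $v\in\{0,1\}^n$ let $D(v)\subset\rp{2}$ be the corresponding complete resolution. Since $L$ is null homologous and each $D(v)$ is obtained from $L$ by saddle cobordisms, $[D(v)]=[L]=0$ in $H_1(\rp{3};\f)$; because disjointly embedded circles in $\rp{2}$ can include at most one $\pi_1$-essential curve, and the parity of the number of essential components equals $[D(v)]=0$, every $D(v)$ in fact consists entirely of null-homotopic circles. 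This is exactly what makes the branching homomorphism $\phi_v\colon H_1(\rp{3}\setminus D(v);\f)\to\f$ sending meridians to $1$ and the generator of $H_1(\rp{3})$ to $0$ well defined: a non--null-homologous $L$ would have a resolution carrying an essential circle $C$, and then $\mu_C=0$ in $H_1(\rp{3}\setminus C;\f)$, so $\phi_v$ could not exist --- this is where the hypothesis is used. The goal is to produce a hypercube of chain complexes $\{\widehat{CF}(\Sigma_0(\rp{3},D(v)))\}_{v}$ whose total complex computes $\widehat{HF}(\Sigma_0(\rp{3},L))$ and whose $E^1$-page is $\widetilde{\mathit{CKh}}^{\alpha_{HF}}(m(L))$.

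Next I would analyze the vertices. When $D(v)$ has $k$ circles it is, as a link in $\rp{3}$, the $k$-component unlink in a ball, so the connected-sum behavior of branched double covers gives $\Sigma_0(\rp{3},D(v))\cong\rp{3}\#\rp{3}\#^{k-1}(S^1\times S^2)$ --- two $\rp{3}$-summands from the disconnected double cover of the $\rp{3}$ outside the ball, and $S^1\times S^2$-summands exactly as in $S^3$. Hence $\widehat{HF}(\Sigma_0(\rp{3},D(v));\f)$ has rank $2^{k+1}=\dim\bigl(W\otimes\overline{V}^{\otimes(k-1)}\bigr)$, and I would set up an explicit isomorphism matching $\widehat{HF}(\rp{3})^{\otimes 2}$ with $W=\langle a,b,c,d\rangle$ and each $\widehat{HF}(S^1\times S^2)$ with a copy of $\overline{V}=\langle\overline{v}_+,\overline{v}_-\rangle$. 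The parameter $e_{D(v)}(P)$ --- equivalently the nesting of the disks bounded by the circles of $D(v)$ relative to $P$ and to the essential curve $\rp{1}\subset\rp{2}$ --- is precisely the datum that singles out which circle plays the role of the $W$-factor, so that the assignment agrees with $F^{\alpha_{HF}}_{D(v)}$, compatibly (up to overall shifts) with the homological and quantum gradings.

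For the edges and the higher structure: along an edge $v\to v'$ the resolutions differ by a single saddle supported in a ball, which together with the singular local model forms an unoriented skein triple; passing to even branched double covers converts this into a triple of closed $3$-manifolds related by a Dehn surgery, hence an exact triangle in $\widehat{HF}$ and a well-defined edge map $\widehat{HF}(\Sigma_0(\rp{3},D(v)))\to\widehat{HF}(\Sigma_0(\rp{3},D(v')))$. Because the saddle, the skein triple and the surgery are all local, the higher compatibility data needed by the link-surgeries spectral sequence of \cite{MR2141852} --- the null-homotopies of length-two compositions and the higher polygon maps --- are constructed just as in the $S^3$ case, by positioning the Heegaard diagrams to agree away from the surgery regions; nothing here uses anything particular to $S^3$. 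Feeding the resulting hyperbox into the link-surgeries spectral sequence then yields a spectral sequence converging to $\widehat{HF}(\Sigma_0(\rp{3},L))$, with $E^1=\bigoplus_v\widehat{HF}(\Sigma_0(\rp{3},D(v)))$ and $d_1$ the sum of the edge maps.

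The main obstacle is the final step: identifying $(E^1,d_1)$ with $\widetilde{\mathit{CKh}}^{\alpha_{HF}}(m(L))$. The mirror appears as in \cite{MR2141852} because mirroring $L$ interchanges the two resolutions at each crossing, so the cube of even branched double covers of $D$'s resolutions, with its natural maps, is the cube of resolutions of $m(L)$. When a saddle merges or splits two ordinary ($\overline{V}$-type) circles the edge map is the usual Frobenius multiplication or comultiplication, exactly as in $S^3$; the delicate cases are the saddles that change $e(P)$, i.e.\ that alter the $W$-factor, and I must show that these induce precisely the dyad maps $f\colon W\to\overline{V}$ with $f(a)=f(d)=0$, $f(b)=f(c)=\overline{v}_-$ and $g\colon\overline{V}\to W$ with $g(\overline{v}_-)=0$, $g(\overline{v}_+)=b+c$. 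Concretely this means fixing the bases $\{a,b,c,d\}$ of $\widehat{HF}(\rp{3})^{\otimes 2}$ and $\{\overline{v}_+,\overline{v}_-\}$ of $\widehat{HF}(S^1\times S^2)$ so that the cobordism/triangle map computed from an explicit Heegaard triple for the relevant surgery is literally $f$ (respectively $g$); the defining relations $f\circ g=g\circ f=0$ of a dyad then correspond to $d_1^2=0$. Finally, independence of the spectral sequence --- hence of its $E^2$-page --- from the choice of diagram follows from the invariance of $\widetilde{\mathit{Kh}}^{\alpha_{HF}}$ and of $\widehat{HF}$ together with the naturality of the construction under diagram moves, again following \cite{MR2141852}.
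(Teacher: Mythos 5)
Your overall strategy is the same as the paper's (run the Ozsv\'ath--Szab\'o link surgery spectral sequence on the branched double cover and match $(E^1,d_1)$ with $\widetilde{\mathit{CKh}}^{\alpha_{HF}}(m(L))$), but your identification of the vertex groups is wrong, and the error is fatal to the rest of the argument. You claim that every complete resolution $D(v)$ with $k$ circles has $\Sigma_0(\rp{3},D(v))\cong(\rp{3}\#\rp{3})\#(S^1\times S^2)^{\#(k-1)}$ because $D(v)$ is ``the $k$-component unlink in a ball.'' Even granting that $D(v)$ is an unlink as a link, the branched double cover is not determined by the link type: it is determined by the branching homomorphism $h$, and the cover that actually appears at the vertex $v$ of the surgery hypercube is the one obtained from $\Sigma_0(\rp{3},L)=\Sigma(\rp{3},L,C_P)$ by Dehn surgeries, i.e.\ the cover with $h([C_P])=0$. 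Which of the two covers of the unlink this is depends on the position of $P$ relative to the disks bounded by the circles of $D(v)$ --- precisely the parity $e_{D(v)}(P)$. One gets $(\rp{3}\#\rp{3})\#(S^1\times S^2)^{\#(k-1)}$ when $e_{D(v)}(P)=0$ but $(S^1\times S^2)^{\#k}$ when $e_{D(v)}(P)=1$, with Floer homologies $W\otimes V^{\otimes(k-1)}$ and $\overline{V}\otimes V^{\otimes(k-1)}$ of ranks $2^{k+1}$ and $2^{k}$ respectively. Your uniform answer $W\otimes\overline{V}^{\otimes(k-1)}$ contradicts the very chain complex $\widetilde{\mathit{CKh}}^{\alpha_{HF}}$ you are trying to recover (which assigns $V_{e_s(P)}\otimes V^{\otimes(k_s-1)}$), and it makes the $1\to1$ bifurcation impossible to interpret: that edge preserves the number of circles, so in your setup it would be a map between two copies of the same rank-$2^{k+1}$ group, whereas the dyad maps $f\colon W\to\overline{V}$ and $g\colon\overline{V}\to W$ go between spaces of ranks $4$ and $2$. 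Relatedly, $e_s(P)\in\{0,1\}$ is a parity deciding \emph{which} of the two covers occurs, not ``the datum that singles out which circle plays the role of the $W$-factor'' (that role is played by the marked point $M$).

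Separately, you explicitly defer the computation showing that the $1\to1$ edge maps are the specific maps $f$ and $g$ of $\alpha_{HF}$, calling it ``the main obstacle.'' That computation is the technical heart of the theorem: one must identify the surgery curve and its framing (via explicit Kirby diagrams for the cobordisms $\rp{3}\#\rp{3}\to S^1\times S^2$ and $S^1\times S^2\to\rp{3}\#\rp{3}$), compute the degree shift $-1/2$ of the cobordism maps from the linking matrix to isolate the relevant $\text{Spin}^c$ structures, and then apply the structure theorem for cobordism maps on manifolds with standard $HF^{\infty}$ (the $H_1$-action killing/quotienting argument) to pin down $f(b)=f(c)=\overline{v}_-$, $f(a)=f(d)=0$, $g(\overline{v}_+)=b+c$, $g(\overline{v}_-)=0$. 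Asserting that this ``should'' work out is not a proof; without it the $E^1$ differential is not identified and the theorem does not follow.
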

	
	We only consider null homologous links in $\rp{3}$ because the branched double cover $\Sigma(\rp{3},L)$ only exists for null homologous link $L$. What's more, there are two branched  double covers for each null homologous link $L$, and we will make a specific choice, called the even branched cover $\Sigma_0(\rp{3},L)$. The construction of the spectral sequence is essentially the same as the one in \cite{MR2141852}, with some particular treatment of the cobordism corresponding to $1\rightarrow1$ bifurcation, which is special for link projections to $\rp{2}$. Another difference is that when defining the spectral sequence, we need to use both of the two branched double covers. For each smoothing $L_s$ of $L$, we will use the even branched double cover $\Sigma_0(\rp{3},L_s)$ if the extra parameter $e_s(P)$ we introduced earlier equals to $0$, and the odd branched double cover $\Sigma_1(\rp{3},L_s)$ if $e_s(P)=1$. This is the reason we want to introduce the extra input $\alpha = (V_0,V_1,f,g)$ into our homology theory, where $V_0,V_1$ basically correspond to the two different branched double covers, and $f,g$ correspond to the induced maps on $\widehat{HF}$ between the two branched double covers by performing surgeries associated to $1\rightarrow1$ bifurcations.

	 Here is the organization of this paper. In Section 2, we define the Khovanov-type homology $\widetilde{Kh}^{\alpha}(L)$ combinatorially. For each null homologous link projection $L$, an arbitrary point $P$ in the complement of $L$ in $\rp{2}$ and each dyad $\alpha$, we define a chain complex $\widetilde{\mathit{CKh}}^{P,\alpha}(L)$ in Section $\ref{sec1.1}$. The differential is presented in Section $\ref{sec1.2}$. In Section $\ref{sec1.3}$, we fix a canonical choice of the point $P$, and show that the homology $\widetilde{\mathit{Kh}}^{\alpha}(L)$ is an invariant of null homologous links in $\rp{3}$. In Section $\ref{sec1.4}$, we express the Euler characteristic of $\widetilde{\mathit{Kh}}^{\alpha}(L)$ through skein relations. Section $\ref{sec1.5}$ briefly describes the unreduced versions $\mathit{CKh}^{\alpha}(L)$ and $\mathit{Kh}^{\alpha}(L)$. We give an example calculation of $\widetilde{\mathit{Kh}}^{\alpha}(L)$ for a specific link projection $L$ with some specific choices of $\alpha$ in Section $\ref{sec1.6}$. Finally in Section $\ref{section2.7}$,  we discuss the situation for other links in $\rp{3}$. In Section 3, we give the  construction of the spectral sequence converging to $\widehat{HF}(\Sigma_0(\rp{3},L))$. We discuss the branched double cover $\Sigma(\rp{3},L)$ of $\rp{3}$ in Section $\ref{sec2.1}$. Then we quickly review the construction in \cite{MR2141852} in Section $\ref{sec2.2}$. In Section $\ref{sec2.3}$, we compute the $E^2$ term of our spectral sequence, and show it equals to $\widetilde{Kh}^{\alpha_{HF}}(m(L))$. 
	 
	 \vspace*{3mm}
	 \textbf{Acknowledgments:} This work is partially supported by NSF grant number DMS-2003488. The author wishes to thank his advisor Ciprian Manolescu, who introduced the author to this topic, shared many insightful points of view and helped generously in writing up this paper.

	\section{Definition of the homology}
 An oriented link $K$ in $\rp{3}$ is \textbf{null homologous} if $[K]=0$ in $H_1(\rp{3},\mathbb{Z})$. Note that a null homologous link could have an even number of components which are non-trivial in $H_1(\rp{3},\mathbb{Z})$. Given a null homologous link $K$ in $\rp{3}$, we consider its projection $L$ to $\rp{2}$, by identifying $\rp{3}\backslash \{*\}$ with $\rp{2}\widetilde{\times} I$, the twisted $I$-bundle over $\rp{2}$.  We will associate a  Khovanov-type chain complex to the link projection $L$, and show its homology is an oriented link invariant for null homologous links in $\rp{3}$. 
 
 
  First we introduce some basic algebra notions. All vector spaces in this paper are over $\mathbb{F}_2$ unless stated otherwise. Let $V$ be the graded vector space spanned by $\un$ and $\coun$, with quantum gradings $qdeg( \un) = 1$ and $qdeg(\coun) = -1$. As in the usual definition of Khovanov homology, $V$ has the structure of a Frobenius algebra, with multiplication $m: V \otimes V \longmapsto V$ such that \[ m(\un\otimes\un)=\un, \,\, m(\un\otimes \coun)=m(\coun\otimes \un)=\coun, \,\, m(\coun\otimes \coun)=0,\] and comultiplication $\Delta: V \longmapsto V\otimes V$  such that \[\Delta(\un) = \un\otimes \coun + \coun\otimes \un,\,\,\,\Delta(\coun)=\coun\otimes \coun.\] Note that both $m$ and $\Delta$ change the quantum degree by $-1$.
 
 \begin{definition}
 	A \textbf{dyad} is a tuple $ \alpha = (V_0,V_1, f,g)$, where $V_0$ and $V_1$ are graded vector spaces, $f:V_0 \longmapsto V_1$ and $g:V_1 \longmapsto V_0$ are linear maps, both of quantum degree $-1$, and $f\circ g = 0,\,\, g\circ f=0$. The \textbf{dual dyad} $\alpha^*$ of a dyad $\alpha = (V_0,V_1, f,g)$ is $\alpha^*=(V_1,V_0,g,f)$, obtained by switching $V_1$ with $V_0$ and $g$ with $f$. 
 \end{definition}
 
  For each dyad $\alpha=(V_0,V_1, f,g)$, we give each $V_i$ a trivial right-bimodule structure over $V$, with multiplication $m: V_i \otimes V \longmapsto V_i$ such that \[m(y\otimes \un)=y, \,\,\,m(y \otimes \coun)=0,\,\,\, \forall y \in V_i, \,\,i=1,2,\]
 and comultiplication $\Delta: V_i \longmapsto V_i \otimes V$ such that \[\Delta (y) = y\otimes \coun,\,\,\, \forall y \in V_i, \,\,i=1,2.\]
 Again, both $m$ and $\Delta$ have quantum degree $-1$.
 

\subsection{The cube of resolutions}
\label{sec1.1}

Label the crossings of a null homologous link projection $L$ in $\rp{2}$ from $1$ to $n$. A \textbf{state} $s\in \{0,1\}^n$ is a choice of $0$ or $1$ for each crossing. Given a state $s$, we form a smoothing of $L$ according to the rule in Figure $\ref{fig:1}$. Then, we associate a vector space to each smoothing $L_s$ as follows.

\begin{figure}[t]
	\[
	{
		\fontsize{9pt}{11pt}\selectfont
		\def\svgscale{0.6}
\begingroup%
  \makeatletter%
  \providecommand\color[2][]{%
    \errmessage{(Inkscape) Color is used for the text in Inkscape, but the package 'color.sty' is not loaded}%
    \renewcommand\color[2][]{}%
  }%
  \providecommand\transparent[1]{%
    \errmessage{(Inkscape) Transparency is used (non-zero) for the text in Inkscape, but the package 'transparent.sty' is not loaded}%
    \renewcommand\transparent[1]{}%
  }%
  \providecommand\rotatebox[2]{#2}%
  \newcommand*\fsize{\dimexpr\f@size pt\relax}%
  \newcommand*\lineheight[1]{\fontsize{\fsize}{#1\fsize}\selectfont}%
  \ifx\svgwidth\undefined%
    \setlength{\unitlength}{589.01869859bp}%
    \ifx\svgscale\undefined%
      \relax%
    \else%
      \setlength{\unitlength}{\unitlength * \real{\svgscale}}%
    \fi%
  \else%
    \setlength{\unitlength}{\svgwidth}%
  \fi%
  \global\let\svgwidth\undefined%
  \global\let\svgscale\undefined%
  \makeatother%
  \begin{picture}(1,0.266363)%
    \lineheight{1}%
    \setlength\tabcolsep{0pt}%
    \put(0,0){\includegraphics[width=\unitlength,page=1]{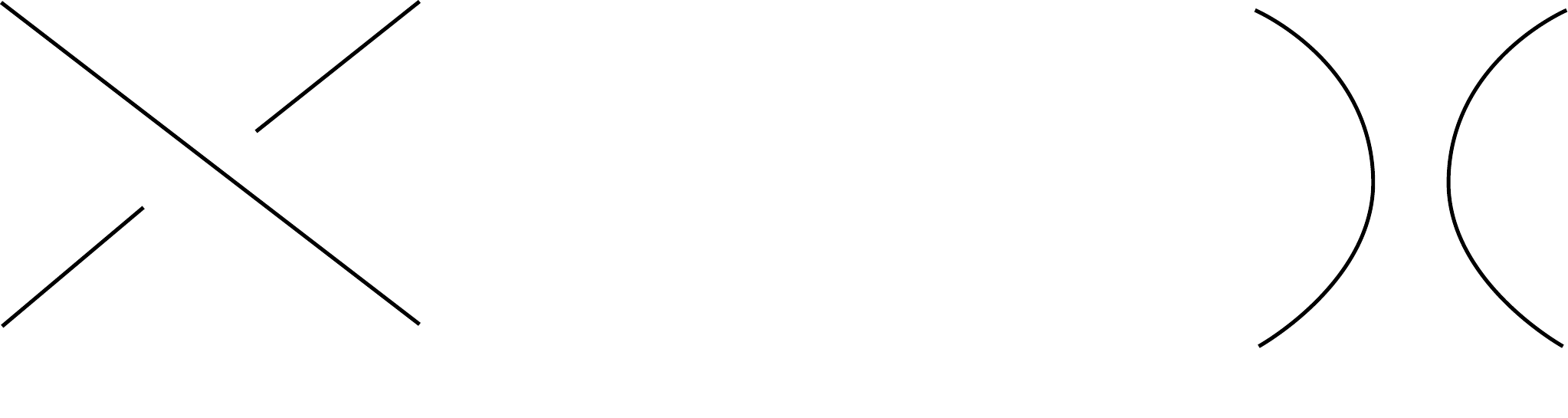}}%
    \put(0.4479226,0.00519316){\color[rgb]{0,0,0}\makebox(0,0)[lt]{\lineheight{1.25}\smash{\begin{tabular}[t]{l}0 Smoothing\end{tabular}}}}%
    \put(0.84098342,0.00626631){\color[rgb]{0,0,0}\makebox(0,0)[lt]{\lineheight{1.25}\smash{\begin{tabular}[t]{l}1 Smoothing\end{tabular}}}}%
    \put(0,0){\includegraphics[width=\unitlength,page=2]{smoothing.pdf}}%
  \end{picture}%
\endgroup%

	}
	\]
	\caption{Smoothings}
	\label{fig:1}
\end{figure}


Define $L_s$ as the link diagram obtained from $L$ by smoothing according to the state $s$. For each $s$, the link $L_s$ is a disjoint union of $k_s$ embedded circles in $\rp{2}$. Now we show each circle in $L_s$ is trivial in $H_1(\rp{2},\mathbb{Z})$, hence divides $\rp{2}$ into a disk and a M{\"o}bius band.

\begin{lemma}
	\label{lem:1.1.1}
	For the link projection $L$ of a null homologous link $K$ in $\rp{3}$, every smoothing $L_s$ is null homologous as well. In particular, each circle $S^1$ in $L_s$ is trivial in $H_1(\rp{2},\mathbb{Z})$, and divides $\rp{2}$ into a disk and a M$\ddot{o}$bius band.
\end{lemma}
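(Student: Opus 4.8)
The plan is to reduce the statement to two standard facts about the closed surface $\rp{2}$: that $H_1(\rp{2};\mathbb{Z})\cong\mathbb{Z}/2$, and that the mod $2$ intersection pairing on $H_1(\rp{2};\mathbb{Z}/2)\cong\mathbb{Z}/2$ is nondegenerate, with the nonzero class having self-intersection $1$ (equivalently $x\cup x\neq 0$ in $H^*(\rp{2};\mathbb{F}_2)=\mathbb{F}_2[x]/(x^3)$). Everything else is bookkeeping with Euler characteristics and the connectivity of the cube of resolutions.

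First I would prove that every smoothing $L_s$ is a null-homologous $1$-cycle in $\rp{2}$ (equivalently in $\rp{3}$, since $H_1(\rp{2};\mathbb{Z})\to H_1(\rp{3};\mathbb{Z})$ is an isomorphism). Changing the state at a single crossing only alters the diagram inside a small disk $D$ about that crossing, where one pair of arcs with a fixed set of four endpoints on $\partial D$ is replaced by another such pair. Hence, for states $s,s'$ differing at one crossing, the mod $2$ symmetric difference of $L_s$ and $L_{s'}$ is a $1$-cycle supported in $D$, and it is null-homologous because $H_1(D;\mathbb{Z}/2)=0$. Since the cube of states is connected under single-crossing changes, all the classes $[L_s]$ coincide. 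To see the common value is $0$, I would compare with the oriented (Seifert) resolution $L_{s_{\mathrm{or}}}$: it is an embedded curve obtained from $L$ by orientation-compatible splicings in disjoint balls, so $[L_{s_{\mathrm{or}}}]=[K]=0$ in $H_1(\rp{3};\mathbb{Z})$. (Alternatively, $[L]=p_*[K]=0$, where $p\colon\rp{3}\setminus\{*\}\to\rp{2}$ is the projection, and the same disk argument identifies $[L_s]$ with $[L]$.)

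Next I would upgrade this to the statement that each individual circle of $L_s$ is null-homologous. Write $L_s=C_1\sqcup\cdots\sqcup C_{k_s}$. Because the $C_i$ are pairwise disjoint and the $\mathbb{Z}/2$-intersection form of $\rp{2}$ sends the nonzero class to $1$ under self-pairing, at most one $C_i$ can represent the nonzero class of $H_1(\rp{2};\mathbb{Z}/2)$; if exactly one did, then $[L_s]$ would be nonzero, contradicting the previous step. Hence each $C_i$ is null-homologous in $\rp{2}$. Finally, a single embedded circle $C\subset\rp{2}$ with $[C]=0$ is two-sided and separating, so cutting $\rp{2}$ along $C$ produces two compact connected surfaces $A,B$, each with exactly one boundary circle, satisfying $\chi(A)+\chi(B)=\chi(\rp{2})=1$. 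Since a compact surface with nonempty boundary has Euler characteristic $\le 1$ with equality only for the disk, one of $A,B$ is a disk and the other has one boundary circle and Euler characteristic $0$, hence is a M\"obius band by the classification of surfaces; in particular $C$ bounds a disk, so it is also null-homotopic.

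The only genuinely non-formal step is the third one: a disjoint union of circles in a surface can fail to have every component null-homologous even when the union does, and it is precisely the nondegenerate $\mathbb{Z}/2$-intersection form of $\rp{2}$ (together with the prior computation $[L_s]=0$) that excludes this. I expect this to be the main point to get right; Step 1 is a routine "resolution cube is connected" argument, and the disk/M\"obius dichotomy in Step 3 is a one-line Euler characteristic count once separation is known.
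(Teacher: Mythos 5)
Your argument is correct and follows essentially the same two-step strategy as the paper: first show that the mod-2 homology class of a smoothing is independent of the state, since each change of smoothing is supported in a small disk, and then rule out a homologically nontrivial circle using the fact that two disjoint simple closed curves in $\rp{2}$ cannot both represent the generator. The paper implements the first step by counting intersections with half the boundary of the disk model of $\rp{2}$ and the second by cutting along a hypothetical nontrivial circle, whereas you track classes through the cube of resolutions and invoke the nondegenerate mod-2 intersection form; you also spell out, via the Euler characteristic count, the disk/M\"obius dichotomy that the paper simply asserts, so the differences are only in bookkeeping, not in substance.
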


\begin{proof}  We draw $\rp{2}$ as a disk with half of the boundary identified with the other half in opposite direction. For the projection $L$ in $\rp{2}$, consider the number of intersection points of it with half of the boundary of the disk. $L$ represents the generator of $H_1(\rp{2},\mathbb{Z}) = \mathbb{Z}/2$ if the intersection number is odd, and is null homologous if the intersection number is even. Note that the intersection number is unchanged during the smoothing procedure, so $L_s$ has an even number of intersections for each state $s$, as we start with a null homologous link projection $L$. Hence the smoothing $L_s$ is null homologous for every $s$.

 Now we prove each circle in $L_s$ is null homologous by contradiction. Suppose there is an embedded circle in $L_s$ which represents the generator of $H_1(\rp{2},\mathbb{Z})$, then cutting $\rp{2}$ along this circle we obtain a disk $D^2$. As $L_s$ is a disjoint union of circle, there can not be another circle in $L_s$ representing the generator of $H_1(\rp{2},\mathbb{Z})$. Since $L_s$ is null homologous, we get a contradiction. Hence, each circle in $L_s$ is null homologous, and divides $\rp{2}$ into a disk and M{\"o}bius band.\end{proof}
 

Pick a point $P$ in the complement of the $L$ in $\rp{2}$, such that $P$ lies in the complement of each smoothing $L_s$ as well.
\begin{definition}
For each null homologous circle $S^1$ in $\rp{2}$, we say $P$ is \textbf{encircled} by $S^1$ if $P$ lies in the disk bound by $S^1$. Define the \textbf{encircling number} $e_{s}(P)$ as the number of circles in $L_s$ encircling $P$ mod $2$.
\end{definition}

We will associate different vector spaces $\widetilde{\mathit{CKh}}^{P,\alpha}_s(L)$ to $s$ depending on the value of $e_s(P)$. For a given dyad $\alpha=(V_0,V_1,f,g)$, define \[\widetilde{\mathit{CKh}}^{P,\alpha}_s(L) = V_{e_s(P)}\otimes V^{\otimes (k_s-1)},\]where $k_s$ is the total number of circles in the smoothing $L_s$. Now we apply the usual flattening of cube operation to define a chain complex $\widetilde{\mathit{CKh}}^{P,\alpha}_{\bullet}(L)$.
\begin{definition}
	\label{def1.1.3}
	The \textbf{Khovanov-type chain complex} $\widetilde{\mathit{CKh}}^{P,\alpha}_{\bullet}(L)$ of a null homologous link projection $L$ in $\rp{2}$, with a point $P$ in the complement of $L$ and a dyad $\alpha$, is 

\begin{equation}
\label{eq:1}
\widetilde{\mathit{CKh}}^{P,\alpha}_i(L) = \bigoplus_{ \substack{s\in\{0,1\}^n\\\#1(s)=i + n_-} } \widetilde{\mathit{CKh}}^{P,\alpha}_s(L)\{i + n_+-n_-\},
\end{equation}
where $\#1(s)$ is the number of $1$s in the state $s$, $n_+$ and $n_-$ are the numbers of positive and negative crossings in $L$ respectively, and $\widetilde{\mathit{CKh}}^{P,\alpha}_s(L)\{i+n_+-n_-\}$ is the vector space obtained from $\widetilde{\mathit{CKh}}^{P,\alpha}_s(L)$ with quantum degree shifted up by $i+n_+-n_-$. The homology of $\widetilde{\mathit{CKh}}^{P,\alpha}_{\bullet}(L)$ is denoted as $\widetilde{\mathit{Kh}}^{P,\alpha}_{\bullet}(L)$.

\end{definition}

Note that we make a shift in the homology degree by $-n_-$ as well by letting $\#1(s) = i+n_-$, as in the usual Khovanov homology. We will define the differential of this chain complex in the next subsection. 




\subsection{The differential}
\label{sec1.2}
 Choose a marked point $M$ on the link projection $L$. The differential map depends on the choice of $M$, but we will show later that the homology does not depend on it. As in usual Khovanov homology, the differential $d:\widetilde{\mathit{CKh}}^{P,\alpha}_i(L) \longmapsto \widetilde{\mathit{CKh}}^{P,\alpha}_{i+1}(L)$ is given by a summation of maps over edges in the cube of resolutions. Each edge corresponds to changing the smoothing from $0$ to $1$ at one crossing, by our definition of the homology degree $i = \#1(s)-n_-$. There are $3$ cases, as shown in Figure $\ref{fig:bifurcation}$.
\begin{enumerate}
	\item $2\rightarrow1$ bifurcation, where two circles in $L_s$ merge into a circle in $L_{s'}$;
	\item $1\rightarrow2$ bifurcation, where a circle in $L_s$ splits to two circles in $L_{s'}$;
	\item $1\rightarrow1$ bifurcation, where a circle in $L_s$ twists into a new circle in $L_{s'}$.

\end{enumerate}

\begin{figure}[t]
	\[
	{
		\fontsize{7pt}{9pt}\selectfont
		\def\svgwidth{5.5in}
\begingroup%
  \makeatletter%
  \providecommand\color[2][]{%
    \errmessage{(Inkscape) Color is used for the text in Inkscape, but the package 'color.sty' is not loaded}%
    \renewcommand\color[2][]{}%
  }%
  \providecommand\transparent[1]{%
    \errmessage{(Inkscape) Transparency is used (non-zero) for the text in Inkscape, but the package 'transparent.sty' is not loaded}%
    \renewcommand\transparent[1]{}%
  }%
  \providecommand\rotatebox[2]{#2}%
  \newcommand*\fsize{\dimexpr\f@size pt\relax}%
  \newcommand*\lineheight[1]{\fontsize{\fsize}{#1\fsize}\selectfont}%
  \ifx\svgwidth\undefined%
    \setlength{\unitlength}{1006.83229237bp}%
    \ifx\svgscale\undefined%
      \relax%
    \else%
      \setlength{\unitlength}{\unitlength * \real{\svgscale}}%
    \fi%
  \else%
    \setlength{\unitlength}{\svgwidth}%
  \fi%
  \global\let\svgwidth\undefined%
  \global\let\svgscale\undefined%
  \makeatother%
  \begin{picture}(1,0.13284726)%
    \lineheight{1}%
    \setlength\tabcolsep{0pt}%
    \put(0.47842032,0.07170189){\color[rgb]{0,0,0}\makebox(0,0)[lt]{\lineheight{1.25}\smash{\begin{tabular}[t]{l}1$\rightarrow$2\end{tabular}}}}%
    \put(0,0){\includegraphics[width=\unitlength,page=1]{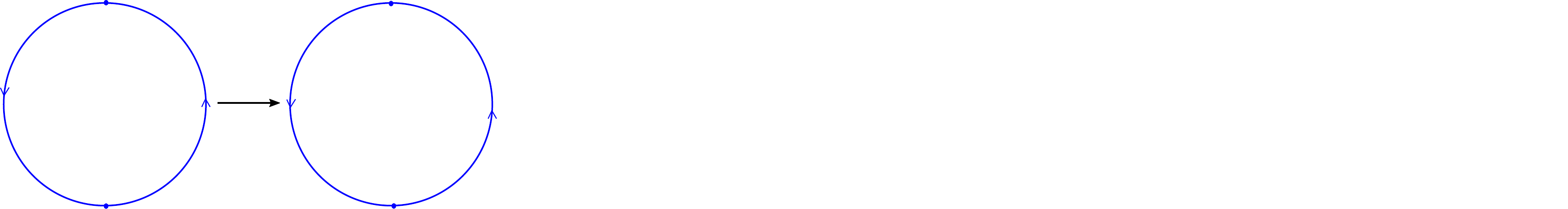}}%
    \put(0.13940217,0.07027412){\color[rgb]{0,0,0}\makebox(0,0)[lt]{\lineheight{1.25}\smash{\begin{tabular}[t]{l}2$\rightarrow$1      \end{tabular}}}}%
    \put(0,0){\includegraphics[width=\unitlength,page=2]{bifurcation.pdf}}%
    \put(0.82257949,0.07095377){\color[rgb]{0,0,0}\makebox(0,0)[lt]{\lineheight{1.25}\smash{\begin{tabular}[t]{l}1$\rightarrow$1\end{tabular}}}}%
  \end{picture}%
\endgroup%

	}
	\]
	\caption{Bifurcations}
	\label{fig:bifurcation}
\end{figure}

The first two bifurcations are similar to the corresponding ones for link projections in $\mathbb{R}^2$, while the third one only appears in unoriantable surfaces, $e.g.$ $\mathbb{RP}^2$. 


First we study the change of $e_s(P)$ under different bifurcations, which will determine the domain and codomain of the chain map on each edge. Recall $e_s(P)$ is the number of circles in $L_s$ encircling $P$ mod $2$.


\begin{lemma}
	We have $e_{s'}(P) = e_{s}(P)$ for the $2\rightarrow1$ and $1\rightarrow2$ bifurcations, while $e_{s'}(P) = e_{s}(P) +1$ mod $2$ for the $1\rightarrow1$ bifurcation. 
\end{lemma}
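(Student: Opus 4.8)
First I would observe that the statement is local: $L_s$ and $L_{s'}$ coincide outside a small disk $\mathcal{D}$ about the crossing, and we may shrink $\mathcal{D}$ so that it misses the reference point $P$. Since $e_s(P)$ is the mod-$2$ count of circles $C$ of $L_s$ with $P$ in the bounding disk $D_C$ (which exists by Lemma \ref{lem:1.1.1}), and passing from $L_s$ to $L_{s'}$ only changes the one or two circles meeting $\mathcal{D}$, in each of the three cases it is enough to compare the bounding disks of the circles taking part in the bifurcation, restricted to $M := \rp{2}\setminus\mathrm{int}\,\mathcal{D}$.

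For the $2\rightarrow 1$ bifurcation, two disjoint circles $C_1=\partial D_1$, $C_2=\partial D_2$ are merged into $C=\partial D_C$ by a band sum carried out inside $\mathcal{D}$. As $C_1$ and $C_2$ are disjoint, $D_1$ and $D_2$ are either disjoint or nested; in either case inspection of the local picture shows that $D_C$ agrees with the symmetric difference $D_1 \triangle D_2$ outside $\mathcal{D}$. Hence for $P \notin \mathcal{D}$ one has $[P \in D_C] = [P \in D_1] + [P \in D_2]$ in $\f$, so replacing $C_1, C_2$ by $C$ leaves the parity unchanged, i.e. $e_{s'}(P) = e_s(P)$. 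The $1\rightarrow 2$ bifurcation is the reverse and is treated identically.

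The $1\rightarrow 1$ bifurcation is the heart of the matter. Outside $\mathcal{D}$ the diagram is a pair of disjoint arcs, and the requirement that both $L_s = \{C\}$ and $L_{s'} = \{C'\}$ remain connected forces these to be arcs $\alpha, \beta$ of $M$ with interleaved endpoints on $\partial \mathcal{D} = \partial M$, each of which must be \emph{essential} (core-crossing) in the Möbius band $M$ — a boundary-parallel arc could not be disjoint from the other. Two disjoint essential arcs cut $M$ into two disks $\Delta_1, \Delta_2$, and tracking how the four arcs of $\partial\mathcal{D}$ cut off by $a,b,c,d$ are distributed shows they split into the two ``opposite'' pairs, one pair on $\partial\Delta_1$ and the other on $\partial\Delta_2$. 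Now $D_C \cap M$ is a nonempty union of components of $M \setminus (\alpha \cup \beta)$ that is not all of $M$ (which is nonplanar, while $D_C$ is a disk), so it equals $\overline\Delta_1$ or $\overline\Delta_2$; attaching it to the corresponding regions of $\mathcal{D}$ and demanding the outcome be a disk — equivalently, that its complement be a Möbius band of Euler characteristic $0$ — determines which one. Running this for $C$ and then for $C'$, whose two chords inside $\mathcal{D}$ form the other pairing, one gets $D_C \cap M \ne D_{C'} \cap M$. Consequently every point of $M$ — in particular $P$ — lies in exactly one of $D_C, D_{C'}$, which gives $e_{s'}(P) = e_s(P) + 1$.

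The step I expect to be the real obstacle is this last computation. The parity comes out $+1$ rather than $0$ only because $\alpha$ and $\beta$ are essential arcs of the Möbius band — precisely the feature that makes $1\rightarrow 1$ bifurcations exist on $\rp{2}$ — so the argument genuinely must use that, and the delicate bookkeeping is the Euler-characteristic check that decides, for each of $C$ and $C'$, which of $\Delta_1, \Delta_2$ lands on the disk side.
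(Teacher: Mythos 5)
Your proposal is correct and follows essentially the same local analysis as the paper: the symmetric-difference identity $[P\in D_C]=[P\in D_1]+[P\in D_2]$ is just a repackaging of the paper's three-case check ($P$ encircled by $0$, $1$, or $2$ of the merging circles), and your $1\rightarrow1$ argument proves rigorously the fact the paper simply reads off its figure, namely that the disk and M\"obius-band sides of the circle are exchanged. The step you flag as delicate does go through: the two disjoint essential arcs with interleaved endpoints cut $M$ into two disks $\Delta_1,\Delta_2$ whose $\partial\mathcal{D}$-arcs form the two opposite pairs, and the two non-crossing chord pairings in $\mathcal{D}$ then force $D_C\cap M$ and $D_{C'}\cap M$ to be the two different $\Delta_i$ (the other choice would glue two regions along two arcs, giving Euler characteristic $0$ rather than a disk), so the parity flips.
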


\begin{proof} See Figure $\ref{fig:esp}$ for an illustration of the situation.

 Let us consider the $2\rightarrow1$ bifurcation first. For a circle $S^1$ not involved in the bifurcation, it is not changed in the process at all, so the encircling state of $P$ with it is not changed as well. So we only need to consider the relation of $P $ with the two merging circles. $P$ could be encircled by $0,1$ or $2$ of them. If $P$ is encircled by none of them, then $P$ is not encircled by the new circle as well, so $e_{s'}(P) = e_{s}(P)$. If $P$ is encircled by one of them, then $P$ is encircled by the new circle, so $e_{s'}(P) = e_{s}(P)$. If $P$ is encircled by two of them, then one disk lies in the other disk, and the disk bounded by the new circle is the complement of the smaller disk in the larger one. Therefore, $P$ is not encircled by the new circle, and $e_{s'}(P) = e_{s}(P)$.

The case for the $1\rightarrow2$ bifurcation is similar, except we reverse the arrow in the change. So we get the same conclusion that $e_{s'}(P) = e_{s}(P)$.

For the $1\rightarrow1$ bifurcation, the disk part and the M{\"o}bius band part of the involved circle are switched, so $e_{s'}(P) = e_{s}(P)+1$ mod $2$. 

\end{proof} 
\begin{figure}[t]
	\[
	{
		\fontsize{8pt}{10pt}\selectfont
		\def\svgwidth{5.5in}
		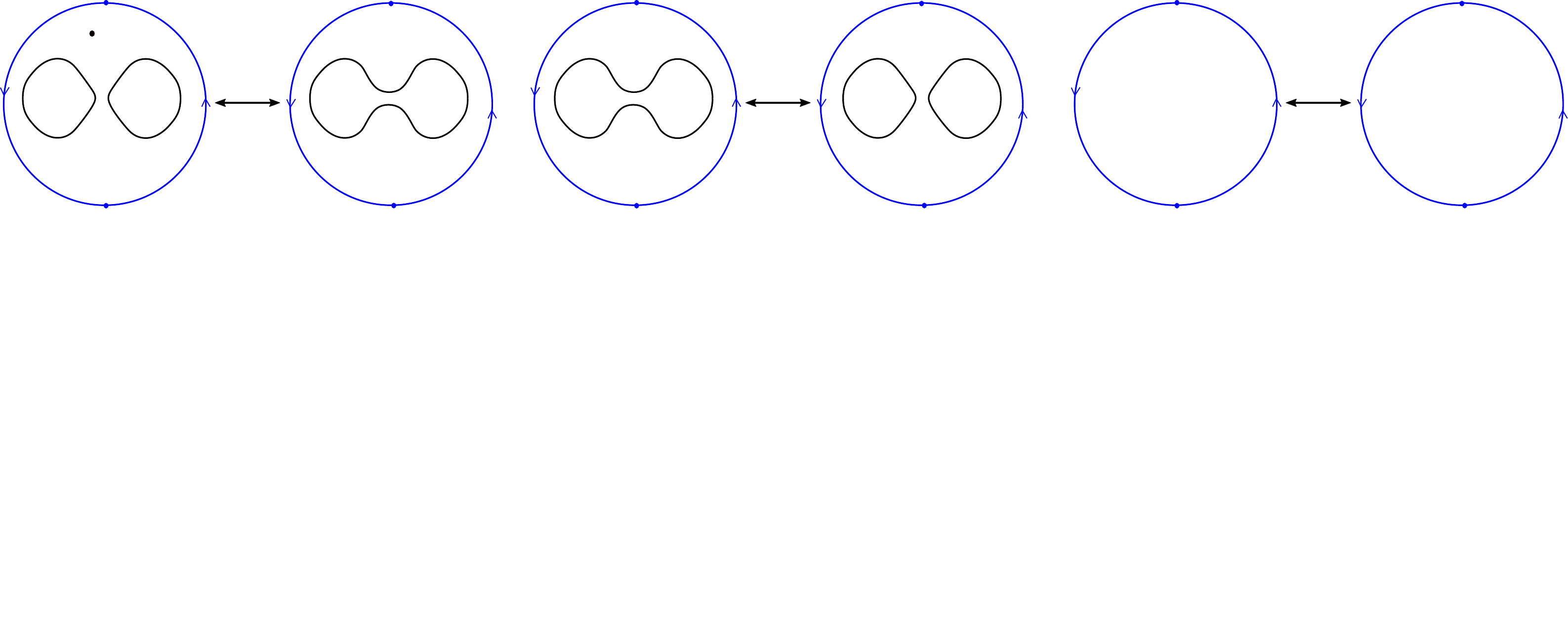
	}
	\]
	\caption{Changes of $e_s(P)$ under bifurcations}
	\label{fig:esp}
\end{figure}

Now we define the differential map   $d:\widetilde{\mathit{CKh}}^{P,\alpha}_s(L) \longmapsto \widetilde{\mathit{CKh}}^{P,\alpha}_{s'}(L)$ 
 in each case. Denote $i = {e_s(P)}$ for simplicity of notation. We identify each circle in $L_s$ with a factor in the tensor product $\widetilde{\mathit{CKh}}^{P,\alpha}_s(L)=V_i\otimes  V^{\otimes (k_{s}-1)}$, by assigning $V_i$ to the circle in $L_s$ with the marked point $M$, and $V$ to each of the other circles. For the $2\rightarrow1$ and $1\rightarrow2$ bifurcations, $d$ is identity on tensor factors corresponding to circles not involved, and we specify what $d$ does to the tensor factors corresponding to the involved circles as follows.
\begin{enumerate}
	\item $2\rightarrow1$ \textbf{bifurcation.}
			By the above lemma, $e_s(P) = e_{s'}({P})$. The number of circles is decreased by 1, so $d$ is a map from $V_{i} \otimes V^{\otimes (k_s-1)}$ to $V_{i}\otimes V^{\otimes (k_s-2)} $. 
	\begin{enumerate}

		\item Suppose the marked point $M$ lies on one of the two circles involved in the bifurcation. The differential is multiplying $V$ to the trivial $V$-module $V_i$. Specifically, \[d= m \otimes id: (V_i\otimes V) \otimes V^{\otimes (k_s-2)}\longmapsto  V_i\otimes V^{\otimes (k_s-2)},\] where $m : V_i \otimes V \longmapsto V_i $ is the multiplication defined previously, with $\un$ acting by identity, and $\coun$ acting by zero. 

	\item Suppose the marked point $M$ does not lie on either of the two circles. Then the differential works like the multiplication $m :V\otimes V \longmapsto V$ in the usual Khovanov homology.
	\end{enumerate}
\item \textbf{$1\rightarrow2$ bifurcation.}
	Again $e_s(P) = e_{s'}({P})$, and the number of circles is increased by 1, so $d$ is a map from $V_{i} \otimes V^{\otimes (k_s-1)}$ to $V_{i}\otimes V^{\otimes k_s} $. 
	\begin{enumerate}
	\item Suppose the marked point $M$ lies on the circle involved in the bifurcation. The differential is the comultiplication on the  trivial $V$-comodule $V_i$. Specifically, \[ d= \Delta \otimes id: V_i \otimes V^{\otimes (k_s-1)} \longmapsto (V_i \otimes V) \otimes V^{\otimes (k_s-1)},\] where $\Delta:V_i \longmapsto V_i\otimes V$ is the comultiplication $\Delta(y) = y\otimes \coun$ for all $y\in V_i$.
	\item Suppose the marked point $M$ does not lie on either of the two circles. Then the differential works like the comultiplication $\Delta :V \longmapsto V \otimes V$ in the usual Khovanov homology.
   \end{enumerate}

\item \textbf{$1\rightarrow1$ bifurcation.} 
For the $1\rightarrow1$ bifurcation, we have $e_{s'}(P) = e_s(P)+1$ mod $2$, and the number of circles is unchanged, so $d$ is a map from $V_i\otimes V^{\otimes (k_s-1)}$ to $V_{i+1}\otimes V^{\otimes (k_s-1)}$. This time the differential map is the same no matter whether $M$ lies on the involved circle or not. We change $V_i$ to $V_{i+1}$ by maps $f$ and $g$ in the dyad $\alpha = (V_0,V_1,f,g)$.
\begin{enumerate}
	\item If $e_s(P)=0$, then $d = f \otimes id: V_0\otimes V^{\otimes (k_s-1)}\longmapsto  V_1\otimes V^{\otimes (k_s-1)} $. 
	\item If $e_s(P)=1$, then $d = g \otimes id: V_1\otimes V^{\otimes (k_s-1)}\longmapsto  V_0\otimes V^{\otimes (k_s-1)} $. 
\end{enumerate}

\end{enumerate}
Observe that the map $d:\widetilde{\mathit{CKh}}^{P,\alpha}_s(L) \longmapsto \widetilde{\mathit{CKh}}^{P,\alpha}_{s'}(L)$ lowers the quantum degree by $1$ for all the above cases. Then, after the shift in the definition $\ref{eq:1}$, the chain map preserves the quantum degree. 

Now it is time to check this definition does give a chain complex. 
\begin{proposition}
We have	$d^2=0$.
	
\end{proposition}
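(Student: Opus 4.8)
The plan is to reduce $d^2 = 0$ to a finite collection of local commutativity statements, one for each two-dimensional face of the cube of resolutions, exactly as in the classical Khovanov setting. Fix a state $s$ and two crossings $c_1 \neq c_2$ at which $s$ takes the value $0$; changing each to $1$ gives a square of states $s, s_1, s_2, s_{12}$. Summing over all edges, the coefficient of a given square in $d^2$ is the sum of the two composites $d_{s_1 \to s_{12}} \circ d_{s \to s_1}$ and $d_{s_2 \to s_{12}} \circ d_{s \to s_2}$, and since we work over $\mathbb{F}_2$ it suffices to show these two composites are \emph{equal} for every square. So the whole proof is a case analysis over the possible pairs of bifurcation types occurring on the two edges out of $s$, together with the (finitely many) configurations of how the circles involved in the two bifurcations interact (disjoint, sharing one circle, etc.) and how the marked point $M$ and the basepoint $P$ sit relative to them.

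First I would handle the pairs of bifurcations that do not involve any $1\to1$ move: these are the $(2\to1,2\to1)$, $(2\to1,1\to2)$, $(1\to2,1\to2)$ cases in their various circle configurations. Here $e_s(P)$ is constant across the whole square, so every vector space in the square is $V_{e_s(P)} \otimes V^{\otimes(\ast)}$, and the maps are the usual Khovanov $m$ and $\Delta$ together with the module/comodule maps $m: V_i\otimes V\to V_i$, $\Delta: V_i\to V_i\otimes V$. The needed commutativities are precisely the Frobenius-algebra relations for $V$ (associativity of $m$, coassociativity of $\Delta$, the Frobenius/bimodule compatibility $\Delta\circ m = (m\otimes \mathrm{id})\circ(\mathrm{id}\otimes\Delta)$, etc.), plus the analogous associativity/coassociativity/compatibility relations for the trivial $V_i$-module structure; the latter are immediate from the explicit formulas $m(y\otimes\un)=y$, $m(y\otimes\coun)=0$, $\Delta(y)=y\otimes\coun$. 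Care is needed when $M$ lies on a circle that merges or splits, since the tensor factor carrying $M$ may change; but one checks the basepoint can always be tracked consistently, just as in reduced Khovanov homology. These cases are routine and I would dispatch them quickly by citing the classical computation and the trivial-module relations.

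The main obstacle is the interaction of a $1\to1$ bifurcation with another bifurcation on the same square, since the $1\to1$ move toggles $e_s(P)$ and hence forces the vector space to jump between $V_0$ and $V_1$; this is exactly where the dyad hypotheses $f\circ g = 0$, $g\circ f = 0$ must be used. Consider first the square with \emph{both} edges being $1\to1$ moves at disjoint crossings: going around one way applies $f$ then $g$ (or $g$ then $f$) on the $M$-circle factor; but the two edges out of $s$ both leave from $e_s(P)$ even, say, so both are $f$, and the two edges into $s_{12}$ are both $g$ — wait, this needs care. Precisely: $e_{s_1}(P) = e_{s_2}(P) = e_s(P)+1$ and $e_{s_{12}}(P) = e_s(P)$, so if $e_s(P) = 0$ the square is $V_0 \xrightarrow{f} V_1 \xrightarrow{g} V_0$ along both routes; the two composites are literally the same map $(g\circ f)\otimes\mathrm{id} = 0$, so they agree (both vanish). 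The genuinely new point is a square with one $1\to1$ edge and one $2\to1$ or $1\to2$ edge: here one route is $(\text{$1\to1$ map})\circ(\text{merge/split map})$ and the other is $(\text{merge/split map})\circ(\text{$1\to1$ map})$, landing in $V_{e_s(P)+1}\otimes V^{\otimes\ast}$, and one must check these commute. Since $f,g$ act only on the distinguished $V_i$-factor by $f\otimes\mathrm{id}$ or $g\otimes\mathrm{id}$, and $m,\Delta$ on the module side also act through that same factor, the required identity is a compatibility between $f$ (resp. $g$) and the trivial-module maps $m(y\otimes\un)=y$, $m(y\otimes\coun)=0$, $\Delta(y)=y\otimes\coun$ — i.e. that $f$ is a map of $V$-bimodules and of $V$-bicomodules, which is automatic because both structures are the trivial ones and $f,g$ are linear. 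One subtlety: when the $1\to1$ circle is the same circle that also merges/splits, or when $M$ moves off it, the distinguished factor is re-chosen, and I would check (using Lemma on $e_s(P)$ and the geometry of the $1\to1$ move switching the disk and Möbius parts) that the relabelling is consistent so that the two composites still agree. I expect writing out this last family of squares carefully — enumerating which circle carries $M$ and $P$ before and after each move — to be the only place where real work is required; everything else is the classical argument plus the trivial-module and $f\circ g = g\circ f = 0$ relations.
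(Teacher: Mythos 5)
Your overall plan (reduce $d^2=0$ to commutativity of each square of the cube, over $\mathbb{F}_2$) is the same as the paper's, but your case enumeration contains a genuine gap: you implicitly assume that the bifurcation type at a given crossing is independent of the state of the other crossing, so that every square is ``(type at $c_1$) commuting past (type at $c_2$)''. In $\rp{2}$ this fails, and it fails exactly at the new configurations the paper must handle (the two-crossing singular graphs labelled $(a)$ and $(b)$ in Figure \ref{fig:singular-curves-with-2-singularity}). In those squares, resolving crossing $1$ first is a $1\rightarrow1$ bifurcation followed by another $1\rightarrow1$ bifurcation, while resolving crossing $2$ first is a $1\rightarrow2$ split followed by a $2\rightarrow1$ merge: one composite is $g\circ f$ (or $f\circ g$) and the other is $m\circ\Delta$. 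These are not two orderings of the same pair of local maps, so neither your ``both routes are literally $(g\circ f)\otimes\mathrm{id}$'' case nor your ``mixed square commutes because $f,g$ act on a different tensor factor (or are trivially module maps)'' case applies. Equality holds only because \emph{both} composites vanish, which uses the dyad axiom $f\circ g=g\circ f=0$ \emph{and} the identity $m\circ\Delta=0$ (automatic on the trivial module $V_i$, and true on $V$ only because we work over $\mathbb{F}_2$, since $m\circ\Delta(\un)=2\coun$); you never invoke the latter identity at all. Your closing remark about ``relabelling the distinguished factor consistently'' gestures at these squares but mischaracterizes what must be proved: the issue is not bookkeeping of the marked factor but that the two routes are genuinely different compositions that must each be shown to be zero.

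Relatedly, your proposal has no mechanism for deciding which local two-crossing configurations actually occur for null homologous projections in $\rp{2}$. The paper settles this by quoting the classification of two-vertex singular graphs in $\rp{2}$ from \cite{MR3189291}: configurations $(c)$ and $(d)$ are excluded because they are not null homologous, $(e)$ and $(f)$ are affine and reduce to the classical Khovanov/reduced Khovanov check (your first family of cases), and $(a)$, $(b)$ are verified by hand as above. To repair your argument you would need either this classification or an equivalent analysis showing that whenever a $1\rightarrow1$ bifurcation appears in a square with both crossings on the same circle, the opposite route is necessarily a split--merge (or a second pair of $1\rightarrow1$'s), and then verify vanishing of both composites as the paper does.
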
 

\begin{proof}
 It is enough to show that each square of the resolution cube commutes. (As we are working over $\mathbb{F}_2$, commuting is the same as anticommuting.) Hence it is enough to consider link projections with two crossings in $\rp{2}$, and ignore the rest part of the projection. A little caution need to be taken in terms of the location of the marked point $M$. It could appear on the link projection with two crossings, or it could lie on the neglected part.
 
  First we quote the following result from \cite{MR3189291}: Up to symmetries, there are exactly six singular graph in $\rp{2}$ with 2 singular points, as shown in Figure $\ref{fig:singular-curves-with-2-singularity}$.

\begin{figure}[t]
	\[
	{
		\fontsize{8pt}{10pt}\selectfont
		\def\svgwidth{5.5in}
		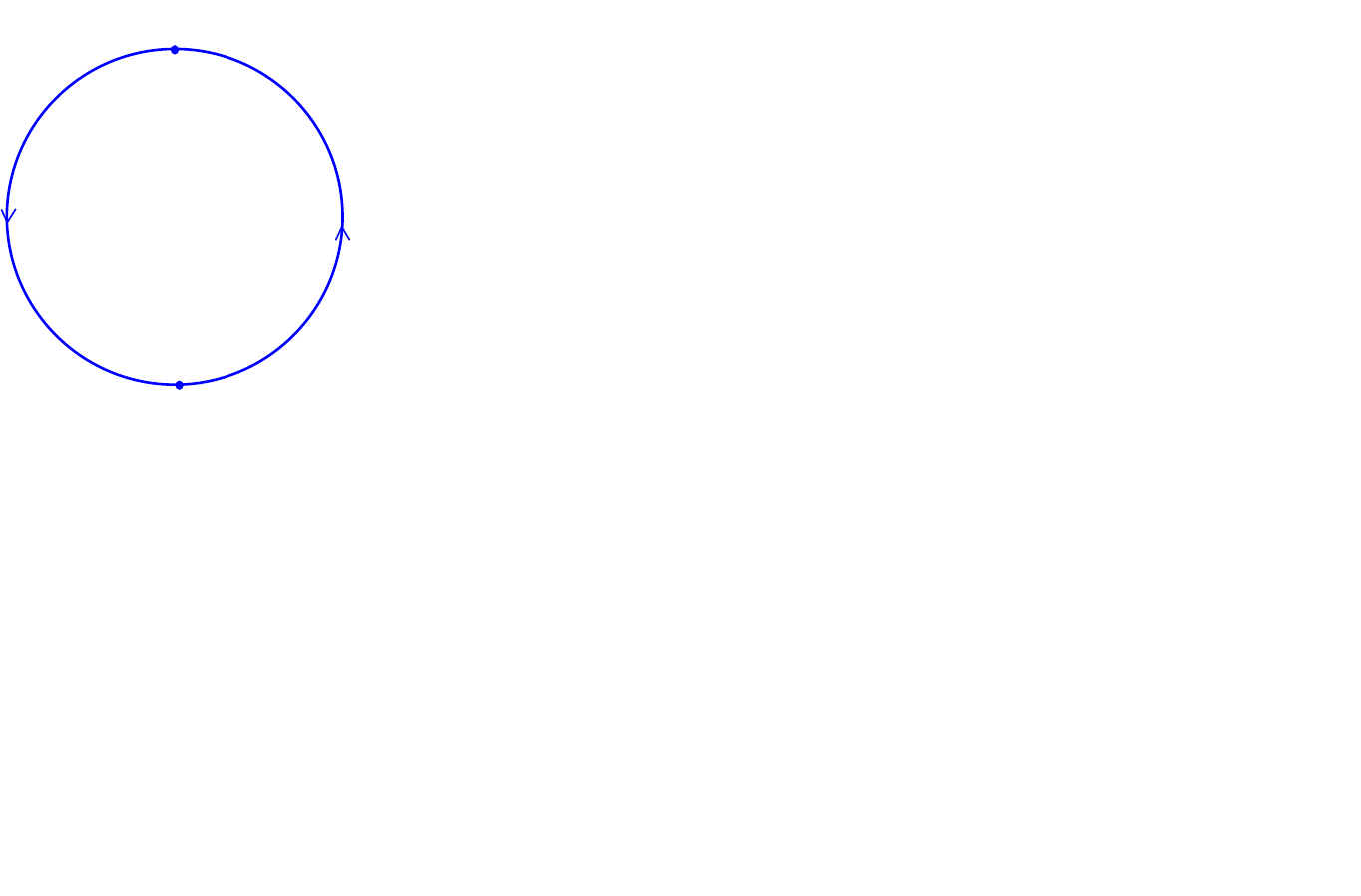
	}
	\]
	\caption{Singular graphs in $\rp{2}$ with 2 singular points}
	\label{fig:singular-curves-with-2-singularity}
\end{figure}

Now we replace each singular point by a positive or negative crossing to get a link with two crossings. For diagrams $(c)$ and $(d)$, the corresponding links are not null homologous, so we omit them in our discussion. For $(e)$ and $(f)$, the corresponding links are affine (they lie in a disk inside $\rp{2}$), so there are no $1\rightarrow1$ bifurcations. Then, the differential behaves like the reduced Khovanov homology if the marked point $M$ is presented, or the usual Khovanov homology if the marked point is not presented. So we have $d^2=0$ in these two cases as well. For the rest two diagrams $(a)$ and $(b)$, we will check its commutativity by hand. We need to compute different cases depending on whether the crossings are positive or negative, where the point $P$ is and whether the marked point $M$ is present for each links. One example will be presented here and the rest are left as exercise. 

Consider the link projection in Figure $\ref{fig:example-for-d20}$ and its resolutions in Figure $\ref{fig:example-for-d20-5}$.

\begin{figure}[t]
	\[
	{
		\fontsize{8pt}{10pt}\selectfont
		\def\svgscale{0.7}
		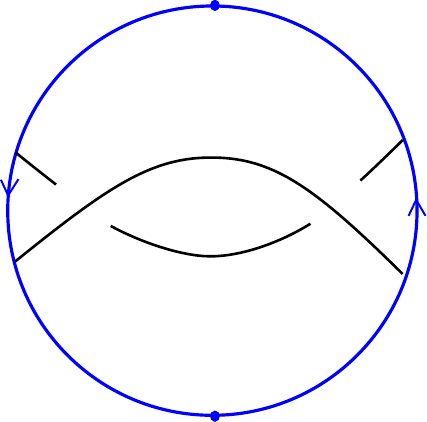
	}
	\]
	\caption{A link projection with 2 crossings}
	\label{fig:example-for-d20}
\end{figure}

\begin{figure}[t]
	\[
	{
		\fontsize{8pt}{10pt}\selectfont
		\def\svgscale{0.6}
		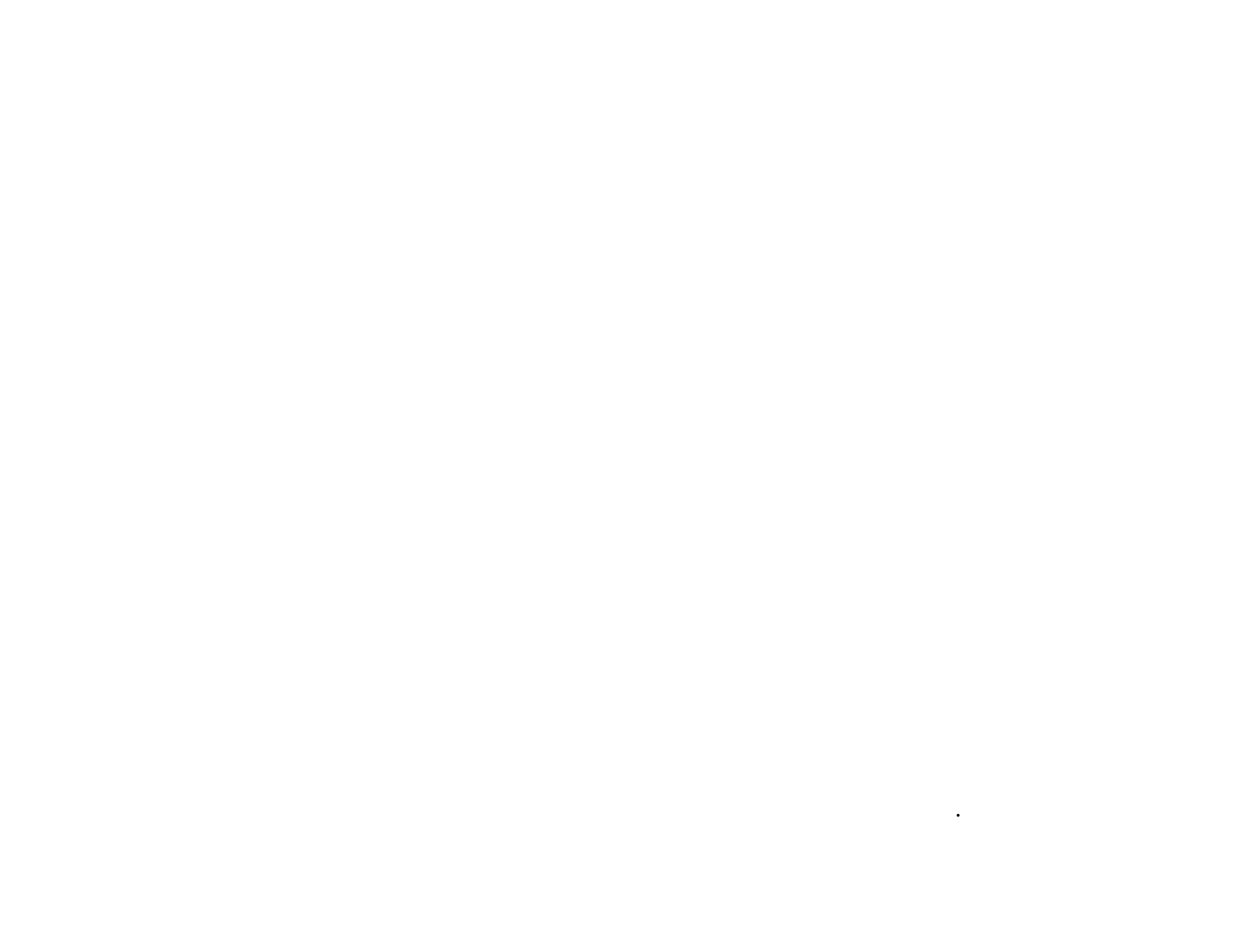
	}
	\]
	\caption{The cubes of resolutions corresponding to different $P$ and $M$}
	\label{fig:example-for-d20-5}
\end{figure}

\begin{enumerate}
	\item Suppose the marked point $M$ is present.
	\begin{enumerate}
		\item The cube of resolutions is drawn in $1(a) $ of Figure \ref{fig:example-for-d20-5}, with a choice of the extra point $P$.

	\begin{center}	
		\begin{tikzcd}[row sep=tiny]
			& V_1\arrow{dr}{g} \\
			V_0\{-1\} \arrow{ur}{f}\arrow{dr}{\Delta} & & V_0 \{1\}\\
			& V_0\otimes V  \arrow{ur}{m}		
		\end{tikzcd}
			
\end{center}
	
		Above are the corresponding vector spaces and maps between them. Both of the composition $m\circ \Delta, g \circ f$ equal to $0$, so it commutes.
		\item We have the same cube of resolutions as before, but with another choice of $P$ in $1(b)$ of Figure \ref{fig:example-for-d20-5}.

\begin{center}

			\begin{tikzcd}[row sep=tiny]
			& V_0 \arrow{dr}{f} \\
			V_1\{-1\} \arrow{ur}{g}\arrow{dr}{\Delta} & & V_1 \{1\}\\
			& V_1 \otimes V  \arrow{ur}{m}
    		\end{tikzcd}

\end{center}
    Again we have $m \circ \Delta = f\circ g=0$.
	\end{enumerate}
	\item Suppose the marked point $M$ is not present.
	\begin{enumerate}
		\item 
	Even though the circle with marked point $M$ is not involved, we draw it here, as the $1\rightarrow1$ bifurcation induces non-trivial map on the tensor factor corresponding to the marked circle. The cube of resolutions is shown in $2(a)$ of Figure \ref{fig:example-for-d20-5}.

		\begin{center}

			\begin{tikzcd}[row sep=tiny]
			&V_1 \otimes V  \arrow{dr}{g\otimes id_V} \\
			V_0\otimes V\{-1\} \arrow{ur}{f\otimes id_V}\arrow{dr}{id_{V_0} \otimes \Delta} & & V_0 \otimes V \{1\}\\
			& V_0\otimes V \otimes V  \arrow{ur}{id_{V_0}\otimes m}		
		\end{tikzcd}
				
\end{center}
		Again, both the compositions $m \circ \Delta $ and $g \circ f$ equal to 0. This time, we use the fact that we are working over $\mathbb{F}_2$, as $m \circ \Delta(\coun) = m (\coun\otimes \un + \un\otimes \coun ) = 2\coun=0$.

		\item We change the position of the point $P$ to get the cube of resolutions in 2(b) of Figure $\ref{fig:example-for-d20-5}$.

		\begin{center}

			\begin{tikzcd}[row sep=tiny]
			& V_0 \otimes V \arrow{dr}{f\otimes id_V} \\
			V_1\otimes V \{-1\}\arrow{ur}{g\otimes id_V}\arrow{dr}{id_{V_1} \otimes \Delta} & & V_1 \otimes V \{1\}\\
			& V_1\otimes V \otimes V \arrow{ur}{id_{V_1}\otimes m}		
		\end{tikzcd}
			
\end{center}
		Again this commutes because $f \circ g=0$ and $m \circ \Delta =0$.
	\end{enumerate}
\end{enumerate}
\end{proof}  

\subsection{Well-definedness of the homology}
\label{sec1.3}
In the last section, we got a chain complex $\widetilde{\mathit{CKh}}^{P,\alpha}_{\bullet}(L)$ for a link projection $L$ with a marked point $M$ on the projection $L$, a dyad $\alpha$ and another point $P$ outside the projection $L$. In this section, we will give a canonical choice of the point $P$ for a given link projection $L$, show its homology $\widetilde{\mathit{Kh}}^{P,\alpha}_{\bullet}(L)$ does not depend on the choice of the marked point $M$ and is invariant under the Reidemeister moves in $\rp{2}$ which passes $P$ even times.

 The chain complex $\widetilde{\mathit{CKh}}^{P,\alpha}_{\bullet}(L)$ depends on the position of $P$. To express the dependence in a succinct way, we take a detour into the discussion of Seifert surfaces and linking numbers for null homologous links in $\rp{3}$ first. 

 \begin{definition} For a null homologous link $K$ in $\rp{3}$, a \textbf{Seifert surface} $F$ of $K$ is a connected compact oriented surface contained in $\rp{3}$ such that $K$ is its oriented boundary.
 \end{definition}

\begin{figure}[t]
	\[
	{
		\fontsize{8pt}{10pt}\selectfont
		\def\svgscale{0.6}
		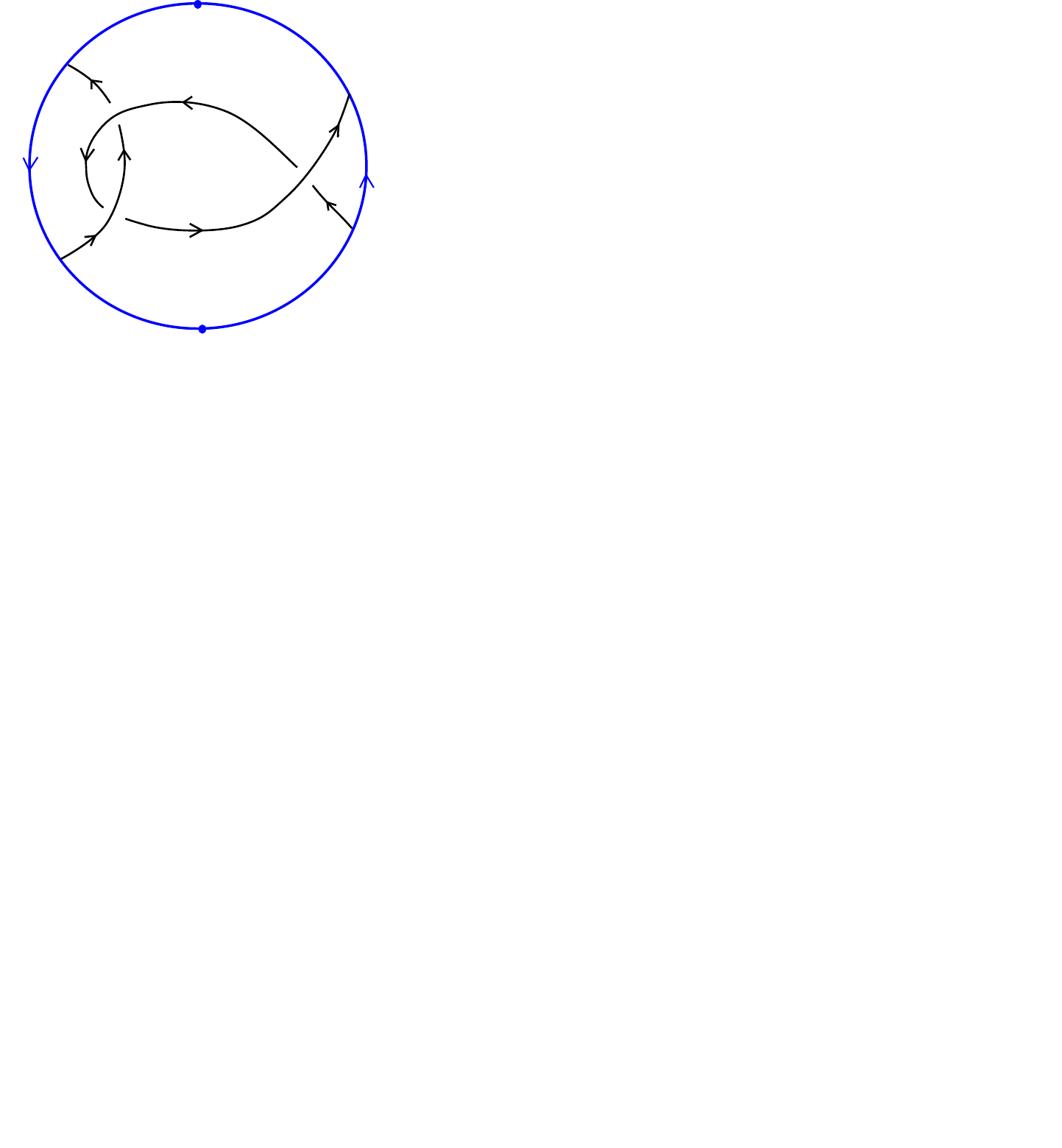
	}
	\]
	\caption{An illustration of a null homologous knot in $\rp{3}$, with a Seifert surface and the decomposition $\rp{2}\backslash L = R_0\bigsqcup R_1$}
	\label{fig:example-for-kh}
\end{figure}

 \begin{lemma}
 	Any null homologous link in $\rp{3}$ has a  Seifert surface.
 \end{lemma}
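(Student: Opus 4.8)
The plan is to produce a compact \emph{oriented} surface in $\rp{3}$ with oriented boundary $K$, deferring connectedness to the very end: once such a (possibly disconnected) surface $F$ has been constructed, its components can be joined by tubes running along embedded arcs in $\rp{3}\setminus K$ (which is connected), and tubing preserves orientability and does not change $\partial F=K$. So it suffices to build an embedded oriented surface bounded by $K$.

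The approach I would take is Seifert's algorithm, carried out inside the twisted $I$-bundle $\rp{3}\setminus\{*\}\cong\rp{2}\,\widetilde{\times}\,I$, exactly along the lines suggested by Figure \ref{fig:example-for-kh}. Project $K$ to $L\subset\rp{2}$ and take the oriented resolution at every crossing; this is one of the smoothings $L_s$, so by Lemma \ref{lem:1.1.1} each of its Seifert circles is null homologous in $\rp{2}$ and hence bounds a disk on one side. The one point that needs to be added compared to the planar case is a nesting lemma: any two disjoint null homologous simple closed curves $C,C'$ in $\rp{2}$ have the property that the disk bounded by $C$ and the disk bounded by $C'$ are either nested or disjoint. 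This follows from a short case analysis according to whether $C'$ lies in the disk side or in the M\"obius band side of $C$, using that a null homologous simple closed curve contained in a M\"obius band is either inessential or isotopic to the boundary (its core is the nontrivial class of $H_1(\rp{2};\mathbb{Z}/2)$). Granting this, the Seifert circles can be pushed to distinct heights in the $I$-direction in order of nesting depth, and at each crossing we reinsert the usual half-twisted band. Since every such disk and every crossing neighborhood lies over a simply connected region of $\rp{2}$, the $I$-bundle restricted there is trivial, so the twisting of the bundle never interferes with embeddedness. Orienting each disk so that its boundary is the corresponding oriented Seifert circle, at every band the two strands meet anti-parallel as in the planar local picture, so these disk orientations are mutually compatible and the resulting surface is oriented with boundary $K$; finally tube to make it connected.

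An alternative, more homological route would be to work in the link exterior $M=\rp{3}\setminus\mathring{N}(K)$ and use obstruction theory: a Seifert surface amounts to a map $M\to S^{1}$, i.e.\ a class $\xi\in H^{1}(M;\mathbb{Z})$ whose restriction to each boundary torus $T_i$ evaluates to $1$ on the meridian. The obstruction to extending the meridian-dual classes $\eta_i\in H^1(T_i;\mathbb{Z})$ over $M$ is the image of $\sum_i\eta_i$ under the connecting map $H^1(\partial M;\mathbb{Z})\to H^2(M,\partial M;\mathbb{Z})$; under Poincar\'e--Lefschetz duality this map is the inclusion-induced map $H_1(\partial M;\mathbb{Z})\to H_1(M;\mathbb{Z})$ and $\eta_i$ corresponds to the class of a longitude $\lambda_i$, so the obstruction is $\sum_i i_*[\lambda_i]$ modulo the subgroup generated by the meridians. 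Since $[K]=\sum_i[\lambda_i]=0$ in $H_1(\rp{3};\mathbb{Z})$ and the kernel of $H_1(M;\mathbb{Z})\to H_1(\rp{3};\mathbb{Z})$ is generated by the meridians, this obstruction vanishes; a regular fiber of a representing map $M\to S^1$, chosen with connected fibers on each $T_i$, then capped off by annuli inside the $N(K_i)$ and tubed together, is the desired Seifert surface.

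The step I expect to be the main obstacle is, for the first approach, the nesting lemma together with the care needed to check that embeddedness and coherent orientability survive over the non-orientable base $\rp{2}$; for the second approach, it is the standard-but-fiddly identification of the connecting homomorphism with the map on first homology of the boundary, and the ensuing meridian/longitude bookkeeping.
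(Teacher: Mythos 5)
Your first argument is essentially the paper's proof: the paper also runs Seifert's algorithm on the orientation-preserving resolution, invokes Lemma \ref{lem:1.1.1} to get bounding disks for the Seifert circles, joins them by half-twisted bands at the crossings, and tubes components together at the end; the nesting lemma you isolate is precisely the ``little caution'' the paper leaves implicit. Your second, obstruction-theoretic argument in the link exterior is a genuinely different route that the paper does not take, and it is also correct --- the key input, that the kernel of $H_1(M;\mathbb{Z})\to H_1(\rp{3};\mathbb{Z})$ is the meridian subgroup, follows from the same Mayer--Vietoris computation the paper carries out in Section \ref{sec2.1} --- with the advantage of being projection-free at the cost of the duality bookkeeping you flag.
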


\begin{proof}
	It is essentially the same proof as the one for oriented links in $S^3$, see for example Theorem 2.2 in \cite{lickorish2012introduction}, with a little caution. Let $L$ be the link projection in $\rp{2}$ for a null homologous link $K$ in $\rp{3}$. Form the resolution $\widehat{L}$ which preserves the orientation at each crossing. See Figure $\ref{fig:example-for-kh}$ for an example. As stated in Lemma $\ref{lem:1.1.1}$, $\widehat{L}$ is a disjoint union of circles, each of which is null homologous as well, dividing $\rp{2}$ into a disk part and a M\"obius band part. Each of these disks gets an orientation from the orientation of $\widehat{L}$. Now join these disks together with half-twisted strips at crossings, which relay the orientations between disks as we formed the resolution preserving the orientation. If the obtained surface is not connected, connect components together by removing small discs and inserting thin tubes.	
\end{proof}

 \begin{definition}
 	\label{def1.3.3}
 	Given a null homologous link $K$ and an oriented link $K'$ in  $\rp{3}$, we define the \textbf{linking number} $\text{lk}(K,K')$ of $K$ and $K'$ as the intersection number $\langle F,K'\rangle$ of a Seifert surface $F$ of $K$ with $K'$.
 \end{definition}

 Note that the linking number $\textit{lk}(K,K')$ doesn't depend on the choice of the Seifert surface $F$ of $K$, as $H_2(\rp{3},\mathbb{Z})=0$. It does depend on the orientation of $\rp{3}$, $K$ and $K'$, but if we only care about the parity of $\textit{lk}(K,K')$, which will be our main concern, then it doesn't matter how we choose the orientation. The orientation of $K$ will be fixed by the given orientation of the null homologous link, and we won't specify the orientation of $K'$ nor of $\rp{3}$ in the following discussion. 
 
Let us go back to the dependence of $\widetilde{\mathit{CKh}}^{P,\alpha}_{\bullet}(L)$ on $P$. Define $C_P$ as the union of the fiber over $P$ in the twisted $I$-bundle $\rp{2}\tilde{\times}I$ with the deleted point $*$ in $\rp{3}\backslash\{*\} = \rp{2}\tilde{\times}I$. So $C_P$ a circle in $\rp{3}$. We can divide the complement of $L$ in $\rp{2}$ into two regions $\rp{2} \backslash L = R_0 \bigsqcup R_1$, such that 
\begin{equation}
	\label{eq1.3.0.1}
	R_i = \{P \in \rp{2} \,\backslash L \,\,\,|\,\,\, \textit{lk}(L,C_P) = i \text{ mod }2\}.
\end{equation}
We will show that  $\widetilde{\mathit{CKh}}^{P,\alpha}_{\bullet}(L)$ depends on $P$ only through the parity of $\textit{lk}(L,C_P)$.
 
 \begin{proposition}
 	
 	For two points $P,Q$ in the complement of $L$ in $\rp{2}$,
 	\label{prop:1.3.4}
 	\begin{equation*}
 	\widetilde{\mathit{CKh}}^{Q,\alpha}_{\bullet}(L) =
 	\begin{cases}
 		\widetilde{\mathit{CKh}}^{P,\alpha}_{\bullet}(L) & \text{if } \textit{lk}(L,C_Q) = \textit{lk}(L,C_P) \text{ mod } 2\\
 		\widetilde{\mathit{CKh}}^{P,\alpha^*}_{\bullet}(L) & \text{if } \textit{lk}(L,C_Q) = \textit{lk}(L,C_P)+1 \text{ mod } 2,
 	\end{cases}       
 \end{equation*}
 	 where $\alpha^*= (V_1,V_0,g,f)$ is the dual dyad of $\alpha=(V_0,V_1,f,g)$.
 \end{proposition}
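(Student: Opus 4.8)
The plan is to show that, for a fixed marked point $M$, the chain complex $\widetilde{\mathit{CKh}}^{P,\alpha}_{\bullet}(L)$ depends on $P$ only through the $\f$-valued function $s\mapsto e_s(P)$ on states, and that replacing this function by its pointwise complement $s\mapsto e_s(P)+1$ has exactly the effect of replacing $\alpha$ by $\alpha^*$. Granting this, the proposition follows from the single identity
\[
e_s(P)+e_s(Q)\equiv \textit{lk}(L,C_P)+\textit{lk}(L,C_Q)\pmod 2\qquad\text{for every state }s,
\]
which we prove separately: when the two linking numbers agree mod $2$ we get $e_s(P)=e_s(Q)$ for all $s$ and the two complexes coincide on the nose (the homological and quantum degree shifts in \eqref{eq:1} do not involve $P$), while when they differ mod $2$ we get $e_s(Q)=e_s(P)+1$ for all $s$ and hence $\widetilde{\mathit{CKh}}^{Q,\alpha}_{\bullet}(L)=\widetilde{\mathit{CKh}}^{P,\alpha^*}_{\bullet}(L)$.

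For the first reduction, compare generators: $\widetilde{\mathit{CKh}}^{P,\alpha}_s(L)=V_{e_s(P)}\otimes V^{\otimes(k_s-1)}$, and since $\alpha^*=(V_1,V_0,g,f)$ the space that $\alpha^*$ assigns to index $e_s(P)$ is $V_{e_s(P)+1}$ (indices mod $2$); thus the relabelling $e_s(P)\rightsquigarrow e_s(P)+1$ is literally the passage $\alpha\rightsquigarrow\alpha^*$ on underlying spaces, with the distinguished tensor factor being the circle through $M$, which is the same for $P$ and $Q$. For the differential one runs through the three bifurcation types of Figure \ref{fig:bifurcation}. The $2\to1$ and $1\to2$ edge maps are assembled from $m,\Delta$ on the $V$-factors together with the \emph{same} trivial module/comodule structure on $V_0$ and on $V_1$, so they are unaffected by interchanging the labels $0$ and $1$. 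On a $1\to1$ edge the map is $f\otimes\mathrm{id}$ when the encircling number of the source state is $0$ and $g\otimes\mathrm{id}$ when it is $1$; under the simultaneous interchange of labels and of $(f,g)$ built into $\alpha^*$, the map attached to a $1\to1$ edge out of $s$ with $e_s(Q)=0$, namely $f\otimes\mathrm{id}\colon V_0\to V_1$, is matched with the $\alpha^*$-map attached to that same edge now having $e_s(P)=1$, i.e. $(\text{the ``}g\text{'' of }\alpha^*)\otimes\mathrm{id}=f\otimes\mathrm{id}$, and symmetrically when $e_s(Q)=1$. Hence the complexes agree.

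It remains to prove the boxed identity, for which it suffices to show $e_s(P)+\textit{lk}(L,C_P)\bmod 2$ is independent of $P$. First, $e_s(P)\equiv\textit{lk}(L_s,C_P)\pmod 2$: by Lemma \ref{lem:1.1.1} each circle of $L_s$ bounds a disk $D\subset\rp{2}$, and pushing the interior of $D$ slightly into $\rp{2}\,\tilde\times\,I$ (keeping $\partial D$ on $\rp{2}$) produces a surface whose mod-$2$ intersection with $C_P$ is $1$ precisely when $P\in D$, so summing over the circles of $L_s$ shows that $\textit{lk}(L_s,C_P)$ counts mod $2$ exactly the circles of $L_s$ encircling $P$, i.e. equals $e_s(P)$. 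Second, $\textit{lk}(L_s,C_P)-\textit{lk}(L,C_P)\bmod 2$ is independent of $P$: the mod-$2$ $1$-cycle $L+L_s$ is supported in the crossing balls of $L$, where it is a union of small loops, one per crossing, each bounding a small disk in $\rp{2}$; with $D'$ the union of these disks and $F,F_s$ Seifert surfaces of $L,L_s$, the chain $F+F_s+D'$ is a mod-$2$ $2$-cycle in $\rp{3}$, so $\langle F+F_s+D',C_P\rangle$ depends only on its class in $H_2(\rp{3};\f)\cong\f$, hence only on $s$; and since $D'$ lies in the crossing balls while $C_P$ meets $\rp{2}$ only at $P$ (which is not at a crossing) and is otherwise disjoint from $\rp{2}$, we have $\langle D',C_P\rangle=0$, so this number equals $\textit{lk}(L,C_P)+\textit{lk}(L_s,C_P)\bmod 2$. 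Combining the two steps gives the claim.

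The enumeration of edge maps in the middle paragraph and the bookkeeping of which tensor factor carries $V_{e_s}$ are routine; the step requiring genuine care is the last one, namely identifying $L+L_s$ as a union of nullhomotopic loops near the crossings and running mod-$2$ intersection theory in $\rp{3}$ carefully enough to see that the ambiguity measured by $H_2(\rp{3};\f)$ is absorbed into the $s$-dependence and therefore cancels when $P$ is compared with $Q$.
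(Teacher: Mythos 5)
Your argument is correct, but its key step is genuinely different from the paper's. The paper proves the uniform relation between $e_s(P)$ and $e_s(Q)$ two-dimensionally: it chooses an embedded path $\gamma\subset\rp{2}$ from $P$ to $Q$ avoiding the double points, notes that $\textit{lk}(L,C_Q)=\textit{lk}(L,C_P)+\langle\gamma,L\rangle \bmod 2$ (read off from the Seifert-algorithm surface), and then, since $\gamma$ meets each smoothing $L_s$ in the same points as $L$, counts circles of $L_s$ meeting $\gamma$ an odd number of times to conclude $e_s(Q)=e_s(P)$ or $e_s(P)+1$ for all $s$ simultaneously; the translation of the flip $e_s\mapsto e_s+1$ into $\alpha\mapsto\alpha^*$ is only asserted there, whereas you verify it edge-type by edge-type (your bookkeeping of the $1\to1$ maps under the relabelling is correct). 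In place of the path you run three-dimensional intersection theory: you identify $e_s(P)\equiv\textit{lk}(L_s,C_P)\bmod 2$ via pushed-off disks, and show $\textit{lk}(L_s,C_P)+\textit{lk}(L,C_P)$ is independent of $P$ using the closed mod-$2$ $2$-cycle $F+F_s+D'$ and the fact that $[C_P]\in H_1(\rp{3};\f)$ does not depend on $P$. This buys a conceptually cleaner statement (the quantity $e_s(P)+\textit{lk}(L,C_P)$ is an invariant of $s$ alone, with no auxiliary path and no appeal to the particular Seifert surface construction), at the cost of more delicate coefficient bookkeeping: since $H_2(\rp{3};\f)\cong\f$ pairs nontrivially with $[C_P]$, your first step should note that the union of pushed-off disks is orientable and can be oriented compatibly with the boundary orientation of an oriented Seifert surface $F_s$, so that the comparison happens through $H_2(\rp{3};\mathbb{Z})=0$ before reducing mod $2$; likewise the small loops of $L+L_s$ bound disks in the crossing balls rather than literally in $\rp{2}$. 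These are fine points, not gaps, and the rest of your argument matches the paper's first reduction.
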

\begin{proof}
 Choose an embedded path $\gamma$ in $\rp{2}$ connecting $P$ and $Q$ and avoiding the double points of $L$. Since $L$ is null homologous in $\rp{2}$, the number of intersection points $\langle \gamma, L\rangle$ between $\gamma$ and $L$  mod $2$ doesn't depend on the choice of $\gamma$. From the previous construction of Seifert surfaces of $L$, it is clear that \[\textit{lk}(L,C_Q) = \textit{lk}(L,C_P)+\brac{\gamma,L} \text{ mod }2.\] We separate into two cases depending on the parity of $\langle \gamma, L\rangle$. 
	 
	 \begin{enumerate}
	 	\item  If $\langle \gamma, L\rangle$ is even, then for any given state $s\in\{0,1\}^n $, the intersection number  $\langle \gamma, L_s\rangle$ of $\gamma$ and the smoothing $L_s$ is even as well, as $\gamma$ avoids the double points of $L$. For each circle $c$ in $L_s$, if $\gamma$ intersects $c$ even times, then either both $P$ and $Q$ are encircled by $c$, or neither of $P$ and $Q$ are encircled by $c$. If $\gamma$ intersects $c$ odd times, then exactly one of $P$ and $Q$ is encircled by $c$. Since $\langle \gamma, L_s\rangle$ is even, there are even number of circles in $L_s$ such that $\gamma$ intersects it odd times, so the numbers of circles encircling $P$ and $Q$ in the smoothing $L_s$ are the same mod $2$, $i.e.$, $e_s(P) = e_s (Q)$ for any state $s$.  Therefore, the two chain complexes $\widetilde{\mathit{CKh}}^{P,\alpha}_{\bullet}(L)$ and $\widetilde{\mathit{CKh}}^{Q,\alpha}_{\bullet}(L)$ are exactly the same.
	 	\item If $\langle \gamma, L\rangle$ is odd, then by the previous argument, we get $e_s(Q) = e_s (P)+1$ mod $2$ for any state $s$. By checking the definition of the chain complex, we find it is equivalent to switching the role of $V_0$ with $V_1$ and $f$ with $g$. Therefore, we have $\widetilde{\mathit{CKh}}^{Q,\alpha^*}_{\bullet}(L)=\widetilde{\mathit{CKh}}^{P,\alpha}_{\bullet}(L)$, where $\alpha^*=(V_1,V_0,g,f)$ is the dual dyad of $\alpha= (V_0,V_1,f,g)$.
	 \end{enumerate}\end{proof}


See Figure $\ref{fig:example-for-kh}$ for an example of the division $\rp{2} \backslash L = R_0 \bigsqcup R_1$. From now on, we will always assume $P$ lies in the region $R_0$, and drop $P$ from the notation $\widetilde{\mathit{CKh}}^{P,\alpha}_{\bullet}(L)$ and $\widetilde{\mathit{Kh}}^{P,\alpha}_{\bullet}(L)$.

\begin{remark}
	If we change the orientation of some components of the null homologous link $K$, it will change the orientation-preserving smoothing $\hat{L}$, and hence the the division  $\rp{2} \backslash L = R_0 \bigsqcup R_1$. For this reason, we need to fix an orientation on the null homologous link $K$.
\end{remark}

Now we move on to prove invariance under different choices of the marked point $M$.
\begin{proposition}
	 \label{prop1.3.5}
 The homology $\widetilde{\mathit{Kh}}^{\alpha}_{\bullet}(L)$ doesn't depend on the choice of marked point $M$ on $L$.

\end{proposition}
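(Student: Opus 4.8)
The plan is to mimic the standard Khovanov-homology argument that moving the basepoint along the link induces a chain homotopy equivalence. Let $M$ and $M'$ be two marked points on $L$. Since one can pass from $M$ to $M'$ by a sequence of elementary moves each of which slides the marked point across a single crossing (or around a single arc), it suffices to treat the case where $M$ and $M'$ lie on the two arcs adjacent to one crossing $c$ of $L$. For every state $s$, the two points $M$ and $M'$ lie on the same circle of $L_s$ precisely when the $c$-th entry of $s$ resolves $c$ so that the two arcs belong to one circle; otherwise they lie on different circles. I would organize the cube of resolutions by splitting the $n$ crossings into $c$ and the remaining $n-1$ crossings, writing the total complex as the mapping cone of the edge map at $c$ between the two sub-cubes $C_0$ (resolve $c$ by $0$) and $C_1$ (resolve $c$ by $1$).

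Next I would construct an explicit chain isomorphism $\Phi$ from $\widetilde{\mathit{CKh}}^{\alpha}_{\bullet}(L,M)$ to $\widetilde{\mathit{CKh}}^{\alpha}_{\bullet}(L,M')$ that is the identity on all tensor factors except possibly the ones carrying $M$, $M'$, and is given by the appropriate reordering/relabelling of tensor factors on those. The only subtlety is that the distinguished factor (the one attached to the marked circle) carries the module $V_{e_s(P)}\in\{V_0,V_1\}$ rather than $V$. When $M$ and $M'$ lie on the same circle in $L_s$, $\Phi$ is literally the identity (same circle, same module, we just relabel which point marks it). When they lie on different circles in $L_s$, $\Phi$ must swap the roles of the distinguished factor $V_{e_s(P)}$ and an ordinary factor $V$; here I use the trivial $V$-bimodule structure on $V_i$ introduced in Section 2, under which $V_i\otimes V$ and $V\otimes V_i$ are related in a way compatible with $m$ and $\Delta$, so that the swap commutes with all three kinds of bifurcation maps ($2\to1$, $1\to2$, $1\to1$). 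Commuting with the $1\to1$ maps is automatic since $f$ and $g$ act only on the $V_i$ factor and $\Phi$ does not touch which factor that is globally (it is always the marked circle, and $\Phi$ is a bijection of circles). I would check the three bifurcation cases one at a time, exactly paralleling the $d^2=0$ verification already in the paper.

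The main obstacle, and the step I would spend the most care on, is verifying that the tensor-factor swap on the subspace of states where $M,M'$ lie on different circles genuinely commutes with the differential — in particular with the $1\to2$ and $2\to1$ bifurcations that split off or absorb the marked circle, where one must check that the trivial-module multiplication $m\colon V_i\otimes V\to V_i$ and comultiplication $\Delta\colon V_i\to V_i\otimes V$ interact correctly with the ordinary Frobenius $m,\Delta$ on $V$ when the marked point jumps from the surviving circle to the new one. Concretely, the identities $m\circ(\mathrm{id}_{V_i}\otimes m)=m\circ(m\otimes\mathrm{id})$ on $V_i\otimes V\otimes V$ and the coassociativity-type identity for $\Delta$, together with the Frobenius/bimodule compatibility $(\mathrm{id}\otimes m)\circ(\Delta\otimes\mathrm{id})=\Delta\circ m$ on $V_i\otimes V$, are what make $\Phi$ a chain map; these are the analogues of the facts used implicitly in ordinary reduced Khovanov homology, and they hold here because the $V_i$ are genuine trivial $V$-bimodules. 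Once $\Phi$ is shown to be a chain isomorphism, it descends to an isomorphism on homology, giving independence of $M$.

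As a final remark, I would note that $\Phi$ is in fact an isomorphism of chain complexes, not merely a homotopy equivalence, because at each state the rearrangement is a linear isomorphism and these are compatible with the differential; so no spectral-sequence or cone argument beyond the bookkeeping above is actually needed, which simplifies the write-up.
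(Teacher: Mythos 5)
Your reduction to moving $M$ across a single crossing and your observation that the underlying vector spaces agree are fine, but the core of your argument --- a chain isomorphism $\Phi$ that is ``the identity on all tensor factors except possibly the ones carrying $M$, $M'$'', given by swapping/relabelling those two factors --- does not exist. Here is a concrete failure. Take a state $s$ in which $L_s$ has three circles: circle $0$ carrying $M$, circle $1$ carrying $M'$, and a circle $2$, and let $s\to s'$ be a $2\to 1$ bifurcation merging circles $0$ and $2$. With marked point $M$ the edge map is the trivial-module multiplication $m\otimes \mathrm{id}$ on $V_i^{(0)}\otimes V^{(2)}$, which kills every element of the form $y\otimes v_\epsilon\otimes \coun$; with marked point $M'$ the same edge is a merge of two \emph{unmarked} circles, hence the Frobenius multiplication $V^{(0)}\otimes V^{(2)}\to V$, whose kernel is only $\coun\otimes\coun$. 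If $\Phi_s$ acts as $\psi\otimes\mathrm{id}_{V^{(2)}}$ for some $\psi$ supported on the factors of circles $0$ and $1$, then the chain-map identity forces $d_2(\psi(y\otimes v_\epsilon)\otimes \coun)=0$ for all $y,\epsilon$, i.e.\ $\psi(V_i\otimes V)\subseteq V_i\otimes \coun$, so $\psi$ has rank at most $\dim V_i$ and cannot be injective on the $2\dim V_i$-dimensional space $V_i\otimes V$. No local swap, however cleverly chosen, can be a chain isomorphism; the Frobenius identities you invoke (associativity, coassociativity, $(\mathrm{id}\otimes m)\circ(\Delta\otimes\mathrm{id})=\Delta\circ m$) are what make $d^2=0$ work, but they do not address this mismatch.

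The paper's proof uses a genuinely \emph{non-local} map. Writing $V^{\otimes(k_s-1)}=\mathbb{F}_2[S_1,\dots,S_{k_s-1}]/(S_i^2)$ with $S_i$ the element having $\coun$ in the $i$th slot, moving $M$ from circle $0$ to circle $1$ is implemented by the algebra automorphism $\eta_s(S_1)=S_1$, $\eta_s(S_i)=S_1+S_i$ for $i\ge 2$ (extended multiplicatively), tensored with the identity on $V_i$. This touches \emph{every} tensor factor, not just the two marked ones, and it is exactly what reconciles ``quotient by $S_2=0$'' (merge with the $M$-circle) with ``quotient by $S_1=S_2$'' (merge of two unmarked circles in the $M'$-labelling): $\eta$ sends $S_2$ to $S_1+S_2$, which dies in the second quotient over $\mathbb{F}_2$. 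The same change of variables is needed already for ordinary reduced Khovanov homology over $\mathbb{F}_2$, and in this paper it is motivated by the change of basis of $H_1$ of the branched double cover when the base circle of the connecting arcs $\gamma_i$ is changed. To repair your proof you would need to replace your local swap by this global substitution and then verify the commutation case by case, which is what the paper does.
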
 

\textit{Proof.}  Our proof is inspired by the discussion relating the Heegaard Floer homology of branched double cover of $S^3$ branching over $K$ and the Khovanov homology of it in \cite{MR2141852}. See \cite{MR3071132} as well. We define an automorphism $\phi_s: \widetilde{\mathit{CKh}}^{\alpha}_{s}(L) \longmapsto \widetilde{\mathit{CKh}}^{\alpha}_{s}(L)$ induced by the change of marked point $M$ for each state $s$, and check it commutes with the differential $d$. Therefore, we obtain a chain automorphism $\widetilde{\mathit{CKh}}^{\alpha}_{\bullet}(L) \longmapsto \widetilde{\mathit{CKh}}^{\alpha}_{\bullet}(L)$, which induces an isomorphism on $\widetilde{\mathit{Kh}}^{\alpha}_{\bullet}(L)$. 

Let us begin to define $\phi_s$. For a given state $s$, there is a marked circle in  the smoothing $L_s$. Label the marked circle by 0, and the rest circles in the smoothing $L_s$ from 1 to $k_s-1$. Suppose we change the position of marked point $M$ to the circle labeled 1. Give $V^{\otimes (k_s-1)}$ an algebra structure as the quotient of the polynomial algebra over $\mathbb{F}_2$ generated by $S_i$, quotienting out by the relations $S_i^2=0$ for $i=1,...,k_s-1$, where $S_i=\un\otimes \un\otimes...\otimes \coun \otimes \un\otimes...\otimes \un$ with $\coun$ at the $i$th component.  Define $\eta_s:V^{\otimes (k_s-1)}\longmapsto V^{\otimes (k_s-1)} $ to be the algebra automorphism, such that \begin{align*}
		\eta_s(S_1) &= S_1\\
	\eta_s(S_i) &= S_1 +S_i, \text{  for   } 2\leq i\leq k_s-1,
\end{align*} and extend multiplicatively. Note that since we moved the marked point from circle 0 to circle 1, these two labels are switched in the image of the map $\phi_s$.

In this formalism, merging of two circles labeled $i$ and $j$ corresponds to further quotienting out the relation $S_i=S_j$, and splitting of a circle labeled $i$ corresponds to product with $S_i + S_{k_s}$, where $k_s$ is the label of the new circle. Merging of a circle labeled $i$ with the marked circle corresponds to dividing out by the relation $S_i=0$, and splitting of the marked circle corresponds to product with $S_{k_s}$ with $k_s$ the label of the new circle. See Section 5 and 6 in \cite{MR2141852} for more discussion. 

Define $\phi_s: V_i\otimes V^{\otimes (k_s-1)} \longmapsto V_i\otimes V^{\otimes (k_s-1)} $  as $\phi_s =  id_{V_i}\otimes \eta_s$, where $i=e_s(P)$. 

Now we need to check the following diagram commutes.


\begin{center}
\begin{tikzcd}
	\widetilde{\mathit{CKh}}^{\alpha}_{s}(L)  \arrow[r,"d_1"] \arrow{d}{\phi_s}
	& \widetilde{\mathit{CKh}}^{\alpha}_{s'}(L) \arrow[d,"\phi_{s'}"] \\
	\widetilde{\mathit{CKh}}^{\alpha}_{s}(L) \arrow{r}{d_2}
	& \widetilde{\mathit{CKh}}^{\alpha}_{s'}(L)
\end{tikzcd}

\end{center}
We discuss the cases when $d$ is given by the $2\rightarrow1$ bifurcation, the $1\rightarrow2$ bifurcation or the $1\rightarrow1$ bifurcation separately. 

The $1\rightarrow1$ bifurcation is the easiest case, as $d$ acts only on the $V_i$ component of the tensor product, so it commutes with the change of variables in the $V^{\otimes (k_s-1)}$ component.

For the $2\rightarrow1$ bifurcation, it separates into cases depending on whether the marked circles are involved or not.  Note that all the maps in the square are algebra homomorphisms on the tensor component $V^{\otimes{(k_s-1)}}$ and identity on the $V_i$ component, so it is enough to check it on the generators $S_i$. We check different cases as follows.
\begin{enumerate}
	\item Suppose the bifurcation merges two circles different from the circle $0$ and $1$, say circle $2$ and circle $3$. Then both $\eta_s$ and $\eta_{s'}$ send $S_1$ to $S_1$ and $S_i$ to $S_i+S_1$ for $i\neq 1$, while both $d_1$ and $d_2$ quotient out the relation $S_2=S_3$. Then it is obvious the above square commutes.
	\item Suppose the bifurcation merges the circle $0$ with a circle other than circle $1$, say circle $2$. Then $\eta_s$ and $\eta_{s'}$ behave the same as in the first case, while $d_1$ quotients out $S_2=0$, and $d_2$ quotients out $S_1=S_2$. Hence,
	\begin{align*}
		 &\eta_{s'} \circ d_1(S_1) = \eta_{s'}(S_1) = S_1 = d_2(S_1) = d_2 \circ \eta_s(S_1) \\
		 &\eta_{s'} \circ d_1(S_2) = \eta_{s'} (0) = 0 = d_2 (S_1+S_2)= d_2 \circ \eta_s(S_2) \\ 
		  &\eta_{s'} \circ d_1(S_i) = \eta_{s'} (S_i) = S_1+S_i = d_2 (S_1+S_i)= d_2 \circ \eta_s(S_i), \text{ for } i > 2.
	\end{align*} 
If the bifurcation merges circle $1$ with a circle other than circle $0$, then we get a similar diagram, except the arrows $\phi_s$ and $\phi_{s'}$ are reversed. We can check its commutativity by a similar calculation.
		  \item Suppose the bifurcation merges circle $0$ and circle $1$. $\eta_s$ is the same as above, while $\eta_s'$ is the identity, as we move $M$ on the same circle. $d_1$, $d_2$ both quotient out the relation $S_1=0$. Hence,
		  \begin{align*}
		  	&\eta_{s'} \circ d_1(S_1) = \eta_{s'}(0) = 0 = d_2(S_1) = d_2 \circ \eta_s(S_1) \\
		  	&\eta_{s'} \circ d_1(S_i) = \eta_{s'} (S_i) = S_i = d_2 (S_1+S_i)= d_2 \circ \eta_s(S_i), 
		  	\text{ for } i \geq 2.
		  \end{align*}  

\end{enumerate}
The analysis for the $1\rightarrow2 $ bifurcation is similar. Again, we divide into cases depending on whether the marked circle is involved or not.
\begin{enumerate}
	\item Suppose the bifurcation splits the a circle other than circle $0$ and circle $1$, say circle $2$, into two new circles labeled $2$ and $k_s$. Then both $d_1$ and $d_2$ are multiplication with $S_2 + S_k$. The change of variables map $\phi_{s'}$ sends $S_2+S_k$ to $S_2+S_1 +S_k + S_1$, which equals to $S_2 + S_k$ as we are working over $\mathbb{F}_2$. So the square commutes. 
	\item Suppose the bifurcation splits the  circle $0$. Then $d_1$ is multiplying $S_k$, and $d_2$ is multiplying $S_1 + S_k$. As $\phi_{s'}(S_k) = S_1 +S_k$,  the square commutes.
	\item Suppose the bifurcation splits the circle 1. Then $d_1$ is multiplying $S_1+S_k$ and $d_2$ is multiplying $S_k$. Then the square commutes as well, because  $ \phi_{s'}(S_k+S_1)=S_1+S_k+S_1 = S_k$, as we are working over $\mathbb{F}_2$. 
\end{enumerate}




\begin{flushright}
	$\square$
\end{flushright}

\begin{remark}
	The choice of the change of variable map $\eta_s$ for changing the marked point will become natural when we discuss the relation of this homology with the Heegaard Floer homology of the branching double cover of $\rp{3}$ branched over $K$.
\end{remark}

Now we discuss the invariance of $\widetilde{\mathit{Kh}}^{\alpha}_{\bullet}(L)$ under Reidemeister moves. First, there are 5 Reidemeister moves in $\rp{2}$, the three usual ones and two additional ones that act across the boundary of the 2-disk. They are drawn in the Figure $\ref{fig:r-moves}$. Two links are ambient isotopic in $\rp{3}$ if a link projection of one link can be transformed to a link projection of the other by a finite sequence of the moves R-I to R-V. See \cite{MR1296890} for further discussion. 

  The decomposition $\rp{2}\backslash L = R_0 \bigsqcup R_1$ described before Proposition $\ref{prop:1.3.4}$ is changed under Reidemeister moves, such that the region in $\rp{2}$ swept by the moves is changed from $R_i$ to $R_{i+1}$. We will pick a point $P$ which lies in $R_0$ for both of the link projections before and after the Reidemeister move. It could be achieved by choosing $P$ which lies in $R_0$ for the link projection before the Reidemeister move, such that the Reidemeister move doesn't cross it. So we can assume the Reidemeister moves doesn't cross the point $P$.

\begin{figure}
\[
{
	\fontsize{9pt}{11pt}\selectfont
	\def\svgscale{0.6}
	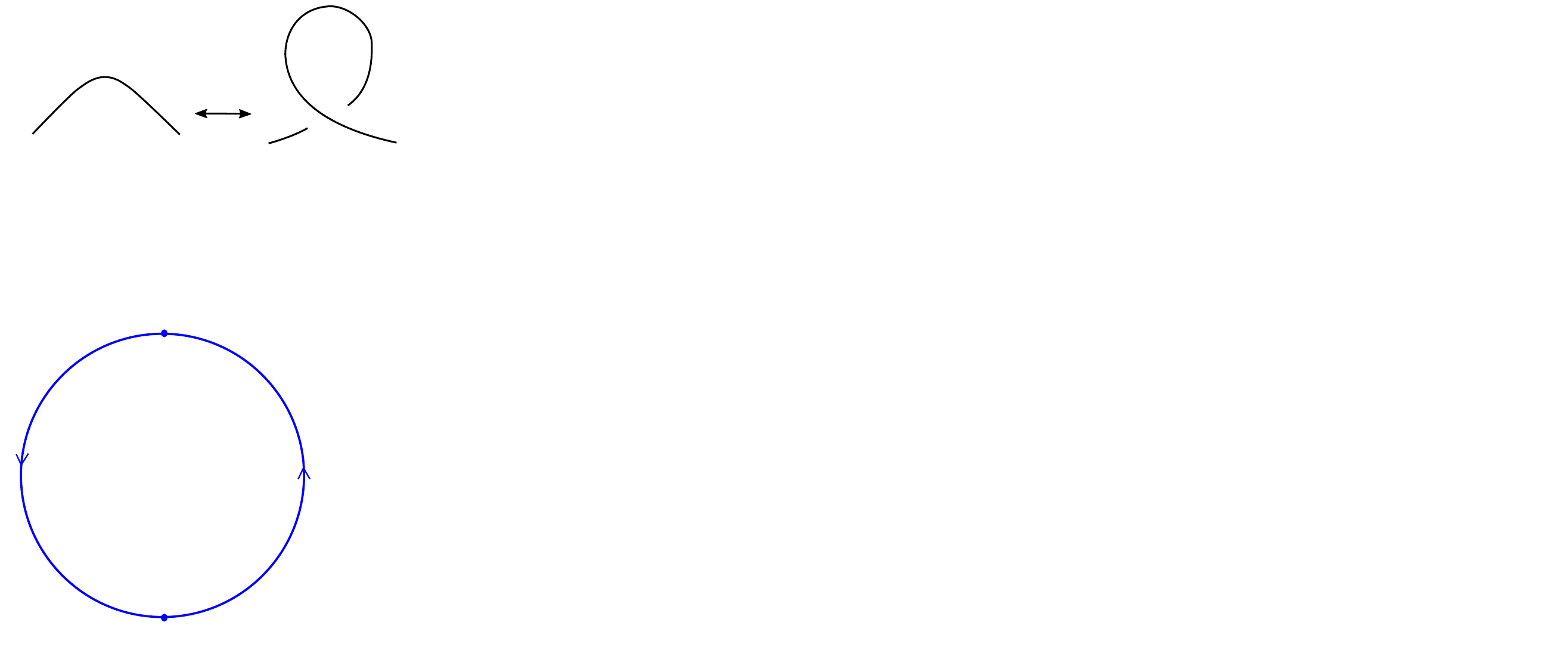
}
\]
			\caption{Reidemeister moves in $\rp{3}$}
	\label{fig:r-moves}
\end{figure}
\begin{proposition}
	The homology $\widetilde{\mathit{Kh}}^{\alpha}_{\bullet}(L)$ is invariant under Reidemeister moves, so it is an invariant of null homologous links in $\rp{3}$.
\end{proposition}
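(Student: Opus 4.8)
The plan is to verify, for each of the five Reidemeister moves R-I through R-V relating link projections $L$ and $L'$ in $\rp{2}$, that the chain complexes $\widetilde{\mathit{CKh}}^{\alpha}_{\bullet}(L)$ and $\widetilde{\mathit{CKh}}^{\alpha}_{\bullet}(L')$ are chain homotopy equivalent. Two reductions make this a local problem. First, by Proposition \ref{prop1.3.5} the homology does not depend on the marked point $M$, so we may slide $M$ onto a strand away from the support of the move and off every circle that is created or destroyed. Second, we have already arranged that $P$ lies in $R_0$ for both diagrams and outside the compact region swept by the move; consequently any circle that appears, disappears, or is modified by the move bounds a disk disjoint from $P$, hence contributes a plain tensor factor $V$ rather than $V_0$ or $V_1$, and the encircling numbers $e_s(P)$ of the part of each resolution lying outside the move are unchanged.

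For R-I, R-II and R-III the move is supported in an embedded $2$-disk $D\subset\rp{2}$ with $P\notin D$. Resolution by resolution, $\widetilde{\mathit{CKh}}^{\alpha}_s(L)$ then factors as a tensor product of an ``inside'' piece --- the classical Khovanov complex of the planar tangle in $D$, built only from copies of $V$ and the maps $m,\Delta$ --- with an ``outside'' piece that carries the distinguished factor $V_{e_s(P)}$ and all occurrences of the dyad maps $f,g$ and of the algebra maps on circles meeting $\rp{2}\setminus D$. The inside edge maps have the form $\mathrm{id}\otimes(\,\cdot\,)$ on the slots they touch, so they commute with the outside differentials; therefore the standard chain homotopy equivalences for R-I, R-II, R-III (delooping together with Gaussian elimination, or Khovanov's explicit homotopies) carry over verbatim after tensoring with the identity on the outside piece, and pass to isomorphisms on homology. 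One only has to check that the degree shifts in Definition \ref{def1.1.3} (the $n_\pm$ counts) absorb the shifts produced by these equivalences, exactly as in the planar case.

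The genuinely new cases are R-IV and R-V, the moves that cross the boundary circle of the $2$-disk and so interact with the crosscap; here a $1\rightarrow1$ bifurcation can occur among the local resolutions, so the local model is no longer a classical Khovanov tangle complex. I would handle these by direct computation, in the same spirit as the verification that $d^2=0$: enumerate the resolutions of the one- or two-crossing tangle the move introduces, write out the maps --- now possibly including $f$ or $g$ --- and exhibit by hand the acyclic subcomplex to be cancelled; when a version of R-IV or R-V does not change the number of crossings, the two complexes are identified outright once $P$ is off the swept region. I expect this to be the main obstacle: one cannot Gaussian-eliminate along a $1\rightarrow1$ edge, because $f$ and $g$ need not be invertible, so the cancellation must proceed along the $m$/$\Delta$/unit/counit edges, and checking that the reduced complex is again $\widetilde{\mathit{CKh}}^{\alpha}_{\bullet}(L')$ will rely on the dyad relations $f\circ g=0$ and $g\circ f=0$ (together with $m\circ\Delta=0$ over $\f$) --- the same identities that force $d^2=0$.
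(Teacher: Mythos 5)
Your overall strategy for R-I, R-II and R-III is the paper's strategy, but the justification you give for it does not quite work as stated. The complex does \emph{not} factor as a tensor product of a classical Khovanov complex inside the disk $D$ with an outside piece: the circles of a resolution are global objects, and the edge maps of, say, the R-II cube that connect the two ``parallel strands'' resolutions ($d_2$ and $d_4$ in the paper's Figure for R-II) depend on how the strands close up outside $D$ and can in fact be $1\rightarrow1$ bifurcations, so the ``inside'' cube is not a classical tangle complex tensored with the identity. The correct observation --- and the one the paper makes --- is that the cancellation argument for R-II (and likewise R-I, R-III) only ever uses the two edges that create and destroy the small circle born inside $D$; those are genuinely local, are the classical $\Delta$ and $m$ (the small circle bounds a disk missing $P$, hence carries a plain $V$ factor), and $d_3$ restricted to the $\un$-part of that factor is an isomorphism, which is all Gaussian elimination needs. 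The properties of the remaining edges, $1\rightarrow1$ or not, are never used. Your conclusion is right, but you should replace the factorization claim with this edge-by-edge statement.

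The genuine gap is your treatment of R-IV and R-V. These are not the hard cases and do not require any new computation: they merely slide a crossing or a pair of boundary points across the boundary circle of the fundamental disk, so they change neither the set of crossings, nor the combinatorics of any resolution $L_s$, nor the circles and their encircling numbers $e_s(P)$ (with $P$ chosen off the swept region). Hence $\widetilde{\mathit{CKh}}^{\alpha}_{\bullet}(L)$ is literally unchanged --- the case you mention only in passing (``identified outright'') is in fact the only case. Your proposed fallback, a by-hand cancellation along $m$/$\Delta$ edges in a local model containing a $1\rightarrow1$ bifurcation, is both unnecessary and left entirely unexecuted (``I would handle\dots I expect this to be the main obstacle''), so as written the proof of invariance under R-IV and R-V is missing. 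Separately, note that the paper also fixes $P$ so that it lies in $R_0$ for both diagrams precisely by requiring the move not to sweep across $P$; you state this correctly, and it is worth keeping, since a Reidemeister move that does cross $P$ swaps $R_0$ and $R_1$ and would replace $\alpha$ by $\alpha^*$.
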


\begin{proof}

 Note that for R-IV and R-V, they don't change the chain complex $\widetilde{\mathit{CKh}}^{\alpha}_{\bullet}(L)$ even, so the homology is of course invariant. We are left with Reidemeister moves R-I, R-II and R-III. For them, the proof of invariance for the usual Khovanov homology, as described in Section 3.5 of \cite{MR1917056}, works with a slight change. We borrow the notation $[\![L ]\!]$ for $\widetilde{\mathit{CKh}}^{\alpha}_{\bullet}(L)$ from \cite{MR1917056} as well, where we only draw the part of $L$ which is changed under the Reidemeister moves.
 
For R-I, we can assume the marked point $M$ does not lie on the moving part of the knot under R-I, as we can move $M$ by a change of variable described in Proposition $\ref{prop1.3.5}$ if necessary. Then the proof of the invariance of the Khovanov homology under R-I works here as well with no change. Note that we have done the shift in the homological degree implicitly in the definition of $\widetilde{\mathit{CKh}}^{\alpha}_{i}(L)$, by letting $\#1(s) = i+n_-$. The shift in quantum degree is the same as in \cite{MR1917056} as well, which is $\#1(s) + n_+-2n_- = i +n_+-n_-$. The reason we are doing these shifts is precisely to make the homology $\widetilde{\mathit{Kh}}^{\alpha}_{\bullet}(L)$ invariant under R-I.

\begin{figure}[t]
	\[
	{
		\fontsize{8pt}{10pt}\selectfont
		\def\svgscale{0.6}
		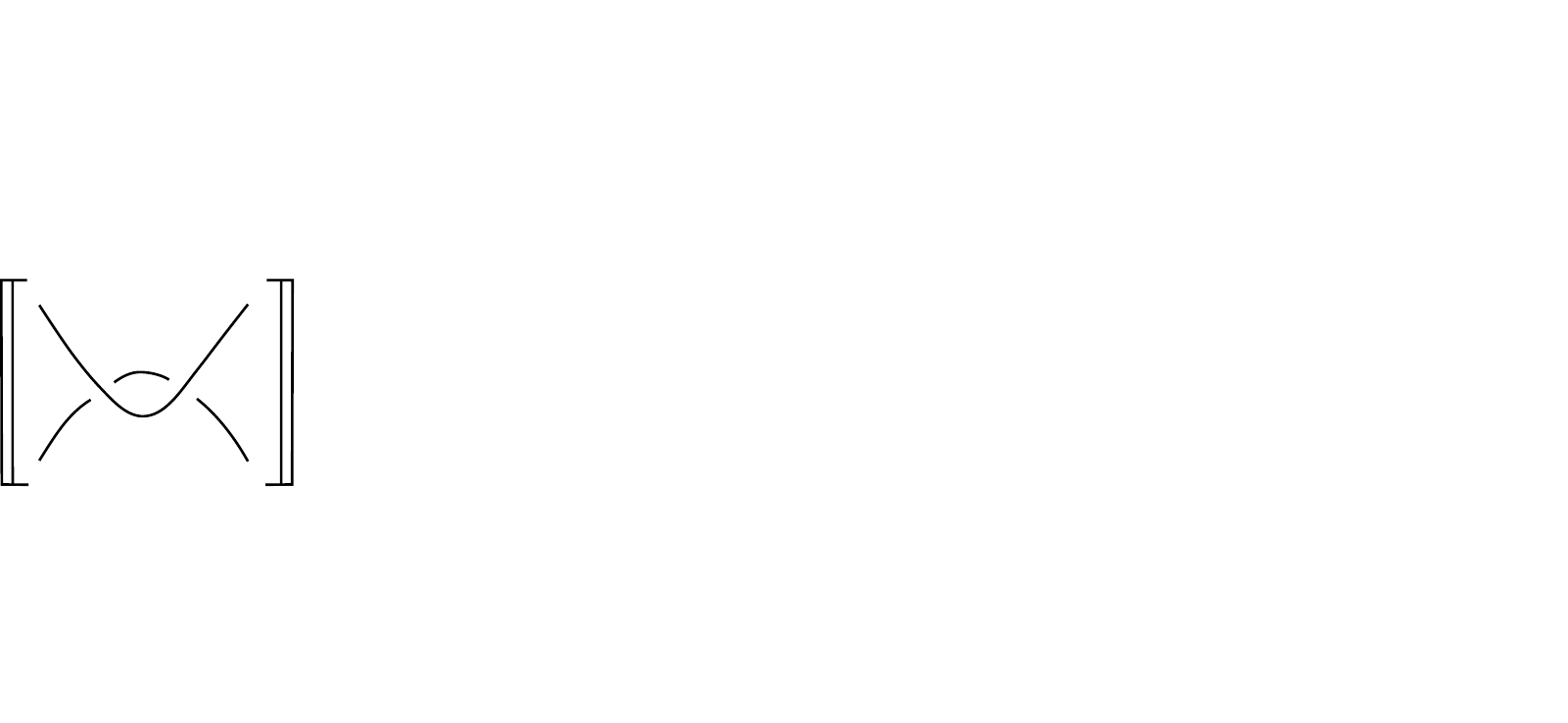
	}
	\]
	\caption{Invariance under RII}
	\label{fig:invariance-unde-r2}
\end{figure}
 
For R-II, again we can assume the marked point $M$ does not lie on the moving part of the knot under R-II. Consider the chain complex in Figure $\ref{fig:invariance-unde-r2}$, where $d_1: C_1 \longmapsto C_1 \otimes V$ is the chain map given by comultiplication $\Delta$, and $d_3 :C_1 \otimes V \longmapsto C_1$ is the multiplication $m$. Note that $d_3$ is an isomorphism on the subchain $C'$ of $C_1 \otimes V$, which takes value $\un$ in the factor $V$. So we can quotient out the subcomplex $ \{C' \longmapsto C_1\}$ without changing the homology. Now $d_1$ induces an isomorphism between $C_1$ and $(C_1\otimes V)/C'$. Then we further quotient out the subcomplex generated by $C_1$. This final quotient is isomorphic to $C_2$, as in the proof for the usual Khovanov homology. Observe that in the proof, we only used the properties of maps $d_1$ and $d_3$, which correspond to splitting and merging a circle respectively, and are entirely similar to the corresponding maps in the usual Khovanov chain complex. The map $d_2$ and $d_4$ might be different from the corresponding maps in the usual Khovanov chain complex, as some $1\rightarrow1$ bifurcations could happen, but we don't use the properties of $d_2$ and $d_4$ in the proof. 

For R-III, we can assume the marked point does no lie on the moving part of R-III as well. Then the usual proof of invariance under R-III works here as well, for the similar reason as in the proof of invariance under R-II. Check Section 3.5 of \cite{MR1917056} for more details.
\end{proof}


Therefore, we obtain the following theorem. 
\begin{theorem}
		For each dyad $\alpha$, $\widetilde{\mathit{Kh}}^{\alpha}(L)$ is an invariant of null homologous links in $\rp{3}$.
\end{theorem}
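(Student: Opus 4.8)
The plan is to assemble the statement directly from the four propositions proved earlier in Section \ref{sec1.3}, so the "proof" is really a synthesis rather than a new argument. First I would recall the chain complex $\widetilde{\mathit{CKh}}^{P,\alpha}_{\bullet}(L)$ constructed in Section \ref{sec1.1} and Section \ref{sec1.2}, together with the verification that $d^2=0$; this gives a well-defined homology $\widetilde{\mathit{Kh}}^{P,\alpha}_{\bullet}(L)$ depending a priori on the link projection $L$, the marked point $M$, the auxiliary point $P$, and the dyad $\alpha$. The goal is to eliminate dependence on everything except the underlying null homologous link $K\subset\rp{3}$ and $\alpha$.

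The key steps, in order, are: (1) invoke Proposition \ref{prop:1.3.4} to reduce the dependence on $P$ to its region: if $P,Q$ both lie in $R_0$ then the complexes are literally equal, so fixing the convention $P\in R_0$ (as is done right after Proposition \ref{prop:1.3.4}) removes $P$ from the data and justifies dropping it from the notation; (2) invoke Proposition \ref{prop1.3.5} to remove dependence on the marked point $M$, since the maps $\phi_s=\mathrm{id}_{V_i}\otimes\eta_s$ assemble into a chain isomorphism $\widetilde{\mathit{CKh}}^{\alpha}_{\bullet}(L)\to\widetilde{\mathit{CKh}}^{\alpha}_{\bullet}(L)$ intertwining the two differentials, hence induce an isomorphism on homology; (3) invoke the final Proposition (invariance under Reidemeister moves) to conclude that $\widetilde{\mathit{Kh}}^{\alpha}_{\bullet}(L)$ is unchanged under R-I, R-II, R-III (where one first slides $M$ off the moving strand using step (2), permissible because that move is an isomorphism), and is literally unchanged under R-IV and R-V since these do not alter the complex. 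One subtlety to address explicitly: the region decomposition $\rp{2}\setminus L=R_0\sqcup R_1$ is defined via the orientation-preserving resolution and the linking number with $C_P$, and it moves under a Reidemeister move so that the swept region flips $R_i\leftrightarrow R_{i+1}$; I would note (as the paragraph before the Reidemeister proposition does) that one can always choose $P\in R_0$ simultaneously for the before- and after-diagrams by taking $P$ far from the support of the move, so the $P\in R_0$ convention is compatible with all five moves.

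Finally I would assemble: by Markov-type theory for links in $\rp{3}$ (the moves R-I through R-V generate ambient isotopy in $\rp{3}$, as recalled from \cite{MR1296890}), any two projections of the same null homologous link $K$ are related by a finite sequence of these moves; combining steps (1)–(3) with the transitivity of isomorphism of graded $\f$-vector spaces shows $\widetilde{\mathit{Kh}}^{\alpha}_{\bullet}(L)\cong\widetilde{\mathit{Kh}}^{\alpha}_{\bullet}(L')$ whenever $L,L'$ are projections of the same null homologous $K$. Hence $\widetilde{\mathit{Kh}}^{\alpha}(L)$ depends only on the isotopy class of $K$ in $\rp{3}$ and on $\alpha$, which is the assertion of the theorem.

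I expect no serious obstacle, since all the real work is in the four cited propositions; the only place requiring care is the bookkeeping for the $P\in R_0$ convention under Reidemeister moves — specifically making sure that after a move the point $P$ still lies in the $R_0$ region of the new diagram, rather than silently switching to $\alpha^*$. This is handled by the observation that the move can be chosen to miss $P$, so that $\mathrm{lk}(L,C_P)\bmod 2$ is unchanged and $P$ stays in $R_0$; I would state this one line explicitly and then the theorem follows by concatenation of the earlier results.
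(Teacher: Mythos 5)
Your proposal is correct and matches the paper exactly: the theorem is obtained there as a direct synthesis of Proposition \ref{prop:1.3.4} (dependence on $P$ only through the region $R_0\sqcup R_1$), Proposition \ref{prop1.3.5} (independence of the marked point $M$), and the Reidemeister-move proposition, with the same observation that $P$ can be chosen in $R_0$ so that the moves avoid it. Nothing is missing.
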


\subsection{The Euler characteristic}
\label{sec1.4}
The Euler characteristic of the usual Khovanov homology for links in $S^3$ gives the unnormalized Jones polynomial of the link. For (not necessarily null homologous) framed links in $\rp{3}$, \cite{MR2113902} and \cite{MR3189291} construct some Khovanov-type homology whose Euler characteristic gives the Kauffman bracket $\brac{L}$ of the framed link $L$, which is an element in the skein module $S(\rp{3})$ of $\rp{3}$. $S(\rp{3})$ is free $\mathbb{Z} \left[A^{\pm 1}\right]$-module over two generators. See discussion of the skein module of lens space in \cite{MR1238877}. In this paper, we use another convention such that $q = -A^2$. Our homology theory depends on the input $\alpha=(V_0,V_1,f,g)$, and its Euler characteristic will be a linear combination of the graded dimension of $V_0$ and $V_1$. Let us begin with some definitions.

\begin{definition}
	For a graded vector space $W = \bigoplus_m W_m$ with homogeneous components $\{W_m\}$, the \textbf{graded dimension} of $W$ is the Laurent polynomial $qdim(W) = \sum_mq^m dim(W_m)$. The \textbf{Euler characteristic} $\chi(C)$ of a chain complex $C =\bigoplus_{i,m}C_{i,m} $ of graded vector space is the alternating sum of the graded dimensions of its homology groups, where $i$ is the homology grading, and $m$ is the quantum grading,
	\[\chi(C) = \sum_{i,m}(-1)^iq^m dim (H_{i,m}).\]
\end{definition}
 
Note that $\chi(C)$ is the same as the alternating sum of the graded dimensions of its chain groups if the differential $d$ preserves the quantum grading, and all the chain groups are finite dimensional.

Now we define two variations of the Kauffman bracket of null homologous link projection in $\rp{2}$. Fix a null homologous link $K$ in $\rp{3}$ and its projection $L$ to $\rp{2}$. Choose a point $P$ in the complement of $L$ in $\rp{2}$.

\begin{definition}
	 Define the \textbf{even Kauffman bracket} $\langle L \rangle_{0}^P$ of a null homologous link projection $L$ in $\rp{2}$ avoiding $P$ by the following two rules:
	 \begin{enumerate}
	 	\item The skein relation: $\langle \backoverslash\rangle_0^P = \langle\hsmoothing\rangle_0^P
	 	- q\langle\smoothing\rangle_0^P$,
	 	\item If $L$ is a disjoint union of $k$ (necessarily null homologous) circles, then \begin{equation*}
	 		\langle L \rangle_{0}^P =
	 		\begin{cases}
	 			(q+q^{-1})^{k-1} & \text{if }P \text{ is encircled by even number of circles in }L,\\
	 			0 & \text{otherwise.}
	 		\end{cases}       
	 	\end{equation*}
	 \end{enumerate} 
 Similarly, define the \textbf{odd Kauffman bracket} $\langle L \rangle_{1}^P$ of a null homologous link projection $L$ in $\rp{2}$ avoiding $P$ by the rules:
  \begin{enumerate}
 	\item The skein relation: $\langle \backoverslash\rangle_1^P = \langle\hsmoothing\rangle_1^P
 	- q\langle\smoothing\rangle_1^P$,
 	\item If $L$ is a disjoint union of $k$ (necessarily null homologous) circles, then \begin{equation*}
 		\langle L \rangle_{1}^P =
 		\begin{cases}
 			(q+q^{-1})^{k-1} & \text{if }P \text{ is encircled by odd number of circles in }L,\\
 			0 & \text{otherwise.}
 		\end{cases}       
 	\end{equation*}
 \end{enumerate} 

\end{definition}

As in the case of $\widetilde{\mathit{CKh}}^{P,\alpha}_{\bullet}(L)$, the Kauffman bracket $\brac{L}_i^P$ depends on $P$ only through the parity of the linking number $\textit{lk}(L,C_P)$ defined in Definition \ref{def1.3.3}, so for a given null homologous link projection $L$, we pick $P$ such that $\textit{lk}(L,C_P)$ is even, and drop $P$ from the notation $\brac{L}_i^P$ for $i=0,1$.
\begin{definition}
	Define the \textbf{even/odd Jones polynomial} of $L$ by $J_i(L) = (-1)^{n_-}q^{n_+-2n_-}\langle L\rangle_i$ for $i=0,1$ respectively.
\end{definition}
 It is easy to see $J_i(L)$ is invariant under Reidemeister moves in $\rp{2}$, so it is an invariant for null homologous links in $\rp{3}$. 
 
From our definition of the chain complex $\widetilde{\mathit{CKh}}^{\alpha}_{\bullet}(L)$, we have the following description of its Euler characteristics.

\begin{proposition}
	For a given null homologous link projection $L$ and a dyad $\alpha=(V_0,V_1,f,g)$, we have $\chi (\widetilde{\mathit{CKh}}^{\alpha}_{\bullet}(L)) = qdim(V_0) J_0(L) + qdim(V_1) J_1(L)$.  
\end{proposition}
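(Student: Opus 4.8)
The plan is to compute the Euler characteristic directly from the definition of the chain complex in \eqref{eq:1}, since the differential preserves the quantum grading (as observed after the definition of the differential), so $\chi(\widetilde{\mathit{CKh}}^{\alpha}_{\bullet}(L))$ equals the alternating sum of the graded dimensions of the chain groups $\widetilde{\mathit{CKh}}^{P,\alpha}_s(L)\{i+n_+-n_-\}$ over all states $s$. First I would unwind the grading shifts: the contribution of a state $s$ with $\#1(s) = i + n_-$ is $(-1)^i q^{i+n_+-n_-}\, qdim\big(\widetilde{\mathit{CKh}}^{P,\alpha}_s(L)\big)$, and since $qdim(V) = q+q^{-1}$ we have $qdim\big(V_{e_s(P)}\otimes V^{\otimes(k_s-1)}\big) = qdim(V_{e_s(P)})\cdot(q+q^{-1})^{k_s-1}$. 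Writing $i = \#1(s) - n_-$ and rewriting $(-1)^i = (-1)^{n_-}(-1)^{\#1(s)}$, $q^{i+n_+-n_-} = q^{n_+-2n_-}q^{\#1(s)}$, the whole sum factors as $(-1)^{n_-}q^{n_+-2n_-}$ times $\sum_s (-1)^{\#1(s)} q^{\#1(s)}\, qdim(V_{e_s(P)})(q+q^{-1})^{k_s-1}$.

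The next step is to split this state sum according to the value of $e_s(P) \in \{0,1\}$: the sum becomes $qdim(V_0)\sum_{s:\,e_s(P)=0}(-1)^{\#1(s)}q^{\#1(s)}(q+q^{-1})^{k_s-1} + qdim(V_1)\sum_{s:\,e_s(P)=1}(-1)^{\#1(s)}q^{\#1(s)}(q+q^{-1})^{k_s-1}$. I claim the first inner sum, after multiplying by $(-1)^{n_-}q^{n_+-2n_-}$, is exactly $J_0(L)$, and similarly the second gives $J_1(L)$. To see this I would compare with the definition of the even/odd Kauffman brackets: the state-sum expansion of $\langle L\rangle_i$ is obtained by expanding the skein relation $\langle\backoverslash\rangle_i = \langle\hsmoothing\rangle_i - q\langle\smoothing\rangle_i$ fully, which assigns to each state $s$ the weight $(-q)^{?}$ — more precisely, resolving the $1$-smoothing at a crossing contributes a factor $-q$ and the $0$-smoothing a factor $1$, so the total weight of state $s$ is $(-q)^{\#1(s)} = (-1)^{\#1(s)}q^{\#1(s)}$, times the evaluation of the resulting union of $k_s$ circles, which by the second rule is $(q+q^{-1})^{k_s-1}$ when $e_s(P)=i$ and $0$ otherwise. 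Hence $\langle L\rangle_i = \sum_{s:\,e_s(P)=i}(-1)^{\#1(s)}q^{\#1(s)}(q+q^{-1})^{k_s-1}$, and multiplying by $(-1)^{n_-}q^{n_+-2n_-}$ gives $J_i(L)$ by definition. Assembling the two pieces yields $\chi(\widetilde{\mathit{CKh}}^{\alpha}_{\bullet}(L)) = qdim(V_0)J_0(L) + qdim(V_1)J_1(L)$.

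A couple of small technical points would need care. One should check that the number of circles resolved in a fully-expanded state of the Kauffman bracket genuinely matches $k_s$ and that the ``base case'' of the skein recursion (a disjoint union of circles, no crossings) is consistent with the inductive expansion — this is the same bookkeeping as in the classical Kauffman bracket and is routine. One also needs to confirm that the choice of $P$ in the region $R_0$ is immaterial here for the same reason it was immaterial for the chain complex (Proposition \ref{prop:1.3.4}) and for the brackets (the parity of $\textit{lk}(L,C_P)$ is what matters, and both sides of the claimed identity are defined using the same $P$). Finally, one should note $qdim(V_i)$ is a well-defined Laurent polynomial since $V_i$ is assumed graded (and, implicitly, finite-dimensional in each degree) so the manipulations with formal Laurent series are legitimate.

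I do not expect a serious obstacle: the argument is essentially a repackaging of the standard proof that the Euler characteristic of Khovanov homology is the Jones polynomial, with the single new feature that the state sum splits into two pieces indexed by the encircling number $e_s(P)$, which dovetails exactly with the two-piece definition of the Kauffman brackets $\langle L\rangle_0$ and $\langle L\rangle_1$. The only place demanding a bit of attention is verifying that the sign and $q$-power conventions ($q = -A^2$, the shifts by $n_\pm$, and the direction of the skein relation) line up so that the state weight from the chain complex matches the state weight from the Kauffman bracket expansion; once those conventions are pinned down, the identity follows term by term.
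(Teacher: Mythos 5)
Your proof is correct and is essentially the paper's argument in fully expanded form: the paper verifies that both sides satisfy the skein relation $\langle \backoverslash\rangle = \langle\hsmoothing\rangle - q\langle\smoothing\rangle$ and checks the base case of a disjoint union of circles, while you unroll that recursion into the explicit state sum $\sum_{s}(-q)^{\#1(s)}(q+q^{-1})^{k_s-1}\,qdim(V_{e_s(P)})$ and split it by the value of $e_s(P)$. The bookkeeping of signs and shifts ($(-1)^{n_-}q^{n_+-2n_-}$ and the factor $(-q)^{\#1(s)}$) matches the conventions in Definition \ref{def1.1.3}, so the identity follows as you state.
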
 
\begin{proof}
 By the definition of the chain complex $ \widetilde{\mathit{CKh}}^{\alpha}_{\bullet}(L)$, both the left hand side and the right hand side of the equation satisfy the same skein relations relating link projections $\backoverslash, \hsmoothing$ and $\smoothing$, so it is enough to verify the equation when $L$ is a disjoint union of circles, which again follows from the definition of $\widetilde{\mathit{CKh}}^{\alpha}_{\bullet}(L)$.
\end{proof}
Note that the sum of the even and odd Kauffman bracket recovers the usual Kauffman bracket for null homologous link projections in $\rp{2}$:\[\brac{L} = \brac{L}_0 + \brac{L}_1,\] so they are some refinements of the usual Kauffman bracket for null homologous link projections in $\rp{2}$

Another observation is that the Euler characteristic $\chi (\widetilde{\mathit{CKh}}^{\alpha}_{\bullet}(L))$ does not depend on the map $f:V_0 \longmapsto V_1$ and $g:V_1 \longmapsto V_0$ in $\alpha$. So by changing $f,g$ while keeping $V_0$ and $V_1$ fixed, we get different homology theories categorifying the same Jones polynomial.

\subsection{An unreduced version of the chain complex}
\label{sec1.5}
In defining the chain complex $\widetilde{\mathit{CKh}}^{\alpha}_{\bullet}(L)$, we choose a marked point $M$ on $L$, assign the vector space $V_{i}$ to the marked circle in each smoothing $L_s$ and assign $V$ to each of the other circles. This resembles the usual definition of reduced Khovanov homology, where we assign the base field $\mathbb{F}_2$ to the marked circle and $V$ to each of the other circles. The difference with the usual one occurs when there is a $1\rightarrow1$ bifurcation, then we change the vector space associated to the marked circle from $V_i$ to $V_{i+1}$.

As in the usual Khovanov homology, we can define an unreduced version of the chain complex as well, denoted as $CKh^{\alpha}_{\bullet}(L)$. Pick a point $P$ in the preferred region $R_0$ of the complement of $L$. (See the discussion before Proposition $\ref{prop:1.3.4}$) Now for each state $s$, we associate the vector space $V_{e_s(P)}\otimes V^{\otimes k_s}$ to it, which assigns $V$ to each of the circles in the smoothing $L_s$, and treat $V_{e_s(P)}$ as a background component. For the differential, we use the usual maps $m: V\otimes V \longmapsto V$ and $\Delta:V \longmapsto V \otimes V$ for the $2\rightarrow1$ bifurcation and the $1\rightarrow2$ bifurcation respectively, as in the usual Khovanov homology. For the $1\rightarrow1$ bifurcation, we use maps $f:V_0\longmapsto V_1$ and $g: V_1 \longmapsto V_0$ acting on the background component $V_i$. The proofs of $d^2=0$ and the invariance of the homology $Kh^{\alpha}_{\bullet}(L)$ under Reidemeister moves are almost the same as we have presented in the previous section (actually slightly easier because we don't need to divide into cases according to where the marked point is). The Euler characteristic of  $CKh^{\alpha}_{\bullet}(L)$ is then related to the unnormalized Jones polynomial, \[ \chi( CKh^{\alpha}_{\bullet}(L)) = (q+q^{-1})\chi(\widetilde{ CKh}^{\alpha}_{\bullet}(L)).\]
\begin{remark}
	 It might have been more natural to start with the unreduced version instead of the reduced version. The reason we chose to present the reduced version first is because the reduced one is what we obtained in the computation of the Heegaard Floer homology of the branched double cover of $\rp{3}$ branching over a knot.   Another reason is that the proofs for the unreduced version are contained in the proofs for the reduced version.
\end{remark}

\subsection{Some example calculation}
\label{sec1.6}
In this subsection, we present computations of $\widetilde{\mathit{Kh}}^{\alpha}_{\bullet}(K)$ for some specific choices of the dyad $\alpha= (V_0,V_1,f,g)$ and the null homologous knot $K$ in $\rp{3}$ drawn in Figure $\ref{fig:example-for-kh}$. The cube of resolution is drawn in Figure $\ref{fig:cube-of-resolution}$. 

\begin{figure}[t]
	\[
	{
		\fontsize{8pt}{10pt}\selectfont
		\def\svgwidth{5.5in}
		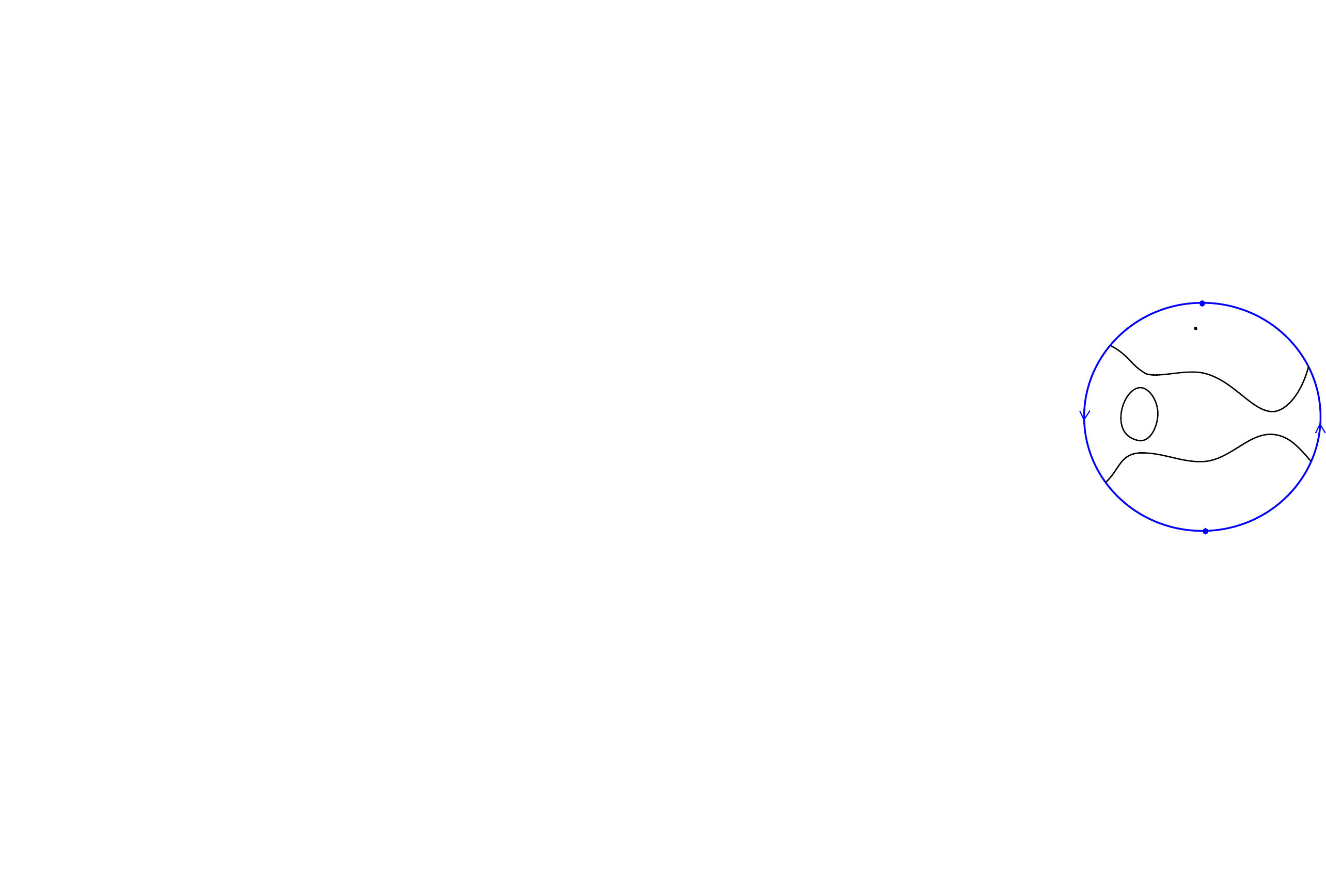
	}
	\]
	\caption{The cube of resolutions}
	\label{fig:cube-of-resolution}
\end{figure}
To save some space when writing down the homology, we will express the Khovanov-type homology $\widetilde{\mathit{Kh}}^{\alpha}_{\bullet}(K)$ by its graded Poincar\'e polynomial $p(\widetilde{\mathit{Kh}}^{\alpha}_{\bullet}(K)) $, which is \[p(\widetilde{\mathit{Kh}}^{\alpha}_{\bullet}(K)) = \sum_{i,m} t^iq^m\text{dim}(\widetilde{\mathit{Kh}}^{\alpha}_{i,m}(K))  .\] The Euler characteristic is obtained by taking $t=-1$.


\begin{enumerate}
	\item $\alpha_{\textit{APS}} =(\mathbb{F}_2,\mathbb{F}_2,0,0)$. For this choice of dyad, the unreduced chain complex $CKh^{\alpha_{\textit{APS}}}_{\bullet}$ recovers the chain complex defined in \cite{MR2113902} and \cite{MR3189291} for null homologous knots in $\rp{3}$ with coefficients $\mathbb{F}_2$. Their homology theories work for all knots in $\rp{3}$, not only the null homologous ones. \cite{MR2113902} proposes a $\mathbb{F}_2$ version, and \cite{MR3189291} fixes some choice of signs so that it works over $\mathbb{Z}$. Note that since we have the symmetry $\alpha_{\textit{APS}} = \alpha_{\textit{APS}}^*$, we can pick the point $P$ anywhere in the complement of $L$ in $\rp{2}$, and it will give the same chain complex, so $P$ plays no role in their homology. 
    	\begin{center}
    	
    	\begin{tikzcd}[row sep=tiny]
    		& \mathbb{F}_2\{-2\} \arrow{r}\arrow{dr} & \mathbb{F}_2\{-1\} \arrow{dr}& \\
    		V\{-3\} \arrow{ur} \arrow{r} \arrow{dr} &\mathbb{F}_2\{-2\} \arrow{ur}\arrow{dr} & \mathbb{F}_2\{-1\} \arrow{r} & V\\
    		&  \mathbb{F}_2\{-2\} \arrow{r}\arrow{ur}	& V\{-1\} \arrow{ur} &
    	\end{tikzcd}
    	
    \end{center} 
The chain complex is shown above, where the maps $V\longmapsto\mathbb{F}_2$ are multiplication of $V$ on the trivial $V$-module $\mathbb{F}_2$, the maps are $\mathbb{F}_2 \longmapsto V$ are comultiplication of the trivial comodule $\mathbb{F}_2$, and the maps $\mathbb{F}_2 \longmapsto \mathbb{F}_2$, $V\longmapsto V$ are 0. The Poincar\'e polynomial is  \[ p(\widetilde{\mathit{Kh}}^{\alpha_{APS}}_{\bullet}) =  t^{-2}q^{-4} +t^{-1}q^{-2}+q^{-1}+1 +tq. \] 
	\item $\alpha_0 = (\mathbb{F}_2,0,0,0) $ and its dual $\alpha_1 = \alpha_0^* = (0,\mathbb{F}_2,0,0)$. The corresponding chain complexes $\widetilde{\mathit{CKh}}^{\alpha_i}_{\bullet}(L)$ are subcomplexes of $\widetilde{\mathit{CKh}}^{\alpha_{\textit{APS}}}_{\bullet}(L)$, consisting of those states $s$ such that $e_s(P) =i$ for $i=0,1$. The Euler characteristics of them equal to the even/odd Jones polynomials  $\chi(CKh^{\alpha_i}_{\bullet}(L)) = J_i(L)$ for $i=0,1$ respectively. In some sense, all  other chain complexes for other choices of $\alpha$ are linear combinations of these two chain complexes.
	
	\begin{center}
		
		\begin{tikzcd}[row sep=tiny]
			& \mathbb{F}_2\{-2\} \arrow{r}\arrow{dr} & 0 \arrow{dr}& \\
			V\{-3\} \arrow{ur} \arrow{r} \arrow{dr} &\mathbb{F}_2\{-2\} \arrow{ur}\arrow{dr} & 0 \arrow{r} & 0\\
			&  \mathbb{F}_2\{-2\} \arrow{r}\arrow{ur}	& V\{-1\} \arrow{ur} &
		\end{tikzcd}
		
	\end{center}
The chain complex for $\alpha_0$ is shown above, where the arrows represent the same maps as in the case of $\alpha_{\textit{APS}}$. The Poincar\'e polynomial is  \[p(\widetilde{\mathit{Kh}}^{\alpha_{0}}_{\bullet}) = t^{-2}q^{-4}+t^{-1}q^{-2}+1. \]	

\begin{center}
	
	\begin{tikzcd}[row sep=tiny]
		& 0 \arrow{r}\arrow{dr} & \mathbb{F}_2\{-1\} \arrow{dr}& \\
		0 \arrow{ur} \arrow{r} \arrow{dr} &0 \arrow{ur}\arrow{dr} & \mathbb{F}_2\{-1\} \arrow{r} & V\\
		&  0 \arrow{r}\arrow{ur}	& 0 \arrow{ur} &
	\end{tikzcd}
	
\end{center}
The chain complex for $\alpha_1$ is as above. Note that $\widetilde{\mathit{CKh}}^{\alpha_{\textit{APS}}}_{\bullet}(L) = \widetilde{\mathit{CKh}}^{\alpha_0}_{\bullet}(L)\oplus \widetilde{\mathit{CKh}}^{\alpha_1}_{\bullet}(L)$, as the maps $f=g=0$. The Poincar\'e polynomial is  \[ p(\widetilde{\mathit{Kh}}^{\alpha_{1}}_{\bullet})=q^{-1}+tq.\]

	\item
	\label{ahf} 
	$\alpha_{\textit{HF}} = (W,\overline{V},f,g)$. $W= \langle a,b,c,d\rangle$ is the span of four elements $a,b,c,d$, with $qdeg (a)=1$, $qdeg(b) = qdeg(c)=0$ and $qdeg(d)=-1$. $\overline{V} = \langle \overline{v}_+,\overline{v}_-\rangle$ is the span of two elements $\overline{v}_+,\overline{v}_-$, with $qdeg(\overline{v}_+)=1$ and $qdeg(\overline{v}_-)=-1$. $f$ and $g$ are defined as follows.
	\begin{align*}
		f(a)=f(d)=0,\,\,\,\,\,\,\, &f(b)=f(c)=\overline{v}_- \\
		g(\overline{v}_-) = 0,\,\,\,\,\,\,&g(\overline{v}_+)=b+c
	\end{align*}
    This one will appear later in the discussion of the Heegaard Floer homology of the branched double cover of $\rp{3}$.
    \begin{center}
    	
    	\begin{tikzcd}[row sep=tiny]
    		& W \{-2\}\arrow{r}\arrow{dr} & \overline{V} \{-1\} \arrow{dr}& \\
    		W\otimes V\{-3\} \arrow{ur} \arrow{r} \arrow{dr} &W\{-2\} \arrow{ur}\arrow{dr} & \overline{V}\{-1\} \arrow{r} & \overline{V}\otimes V\\
    		&  W\{-2\} \arrow{r}\arrow{ur}	& W \otimes V\{-1\} \arrow{ur} &
    	\end{tikzcd}  	
    \end{center}
     The chain complex is shown above, with the obvious maps on each arrow. The Poincar\'e polynomial is \[ p(\widetilde{\mathit{Kh}}^{\alpha_{\textit{HF}}}_{\bullet})=t^{-2}(q^{-5}+ 2q^{-4}+q^{-3}) + t^{-1}(q^{-3}+ q^{-2}+q^{-1}) + q^{-1}+2+q+tq^2.\] 
     
     In contrast, if we consider $\alpha'_{\textit{HF}} = (W,\overline{V},0,0)$, changing both $f$ and $g$ to $0$, then Euler characteristic stays the same, while the Poincar\'e polynomial becomes \[p(\widetilde{\mathit{Kh}}^{\alpha'_{\textit{HF}}}_{\bullet})=t^{-2}(q^{-5}+2q^{-4}+q^{-3}) + t^{-1}(q^{-3}+2q^{-2}+q^{-1}) + q^{-2}+q^{-1}+3+q+t(1+q^2). \]
    
\end{enumerate}

\subsection{Other links in $\rp{3}$}
\label{section2.7}

 We discuss briefly what happens to other links in $\rp{3}$, those that are non-vanishing in $H_1(\rp{3},\mathbb{Z})$, if we perform a similar construction in this subsection. Suppose $K$ is an oriented link in $\rp{3}$ such that $\left[K\right]\neq 0$ in $H_1(\rp{3},\mathbb{Z})$. Consider its link projection $L$ in $\rp{2}$ with $n$ crossings. Now, for each $s\in \{0,1\}^n$, the smoothing $L_s$ of $L$ is a disjoint union of several null homologous circles and one special circle which generates $H_1(\rp{2},\mathbb{Z})$. Each null homologous circle divides $\rp{2}$ into a disk and M\"obius band as before, while cutting $\rp{2}$ along the special circle gives a disk. We can still define $e_s(P) \in \{0,1\}$ by picking a point $P$ in the complement of $L$ in $\rp{2}$ and count the number of null homologous circles encircling $P$ mod $2$. We can assign $V$ to each trivial circle, and $V_{e_s(P)}$ to the special circle. 
 
 In terms of bifurcations, there is no $1\rightarrow1$ bifurcation because of the presence of the special circle. On the other hand, there are two kinds of $2\rightarrow1$ bifurcations, which correspond to merging of two trivial circles and merging of a trivial circle with the special circle respectively. If the $2\rightarrow1$ bifurcation from $L_s$ to $L_{s'}$ merges two trivial circles, then $e_s(P) = e_{s'}(P)$ as before, and the corresponding differential map is the same as the multiplication $m: V\otimes V \rightarrow V$ in the usual Khovanov homology.  If the $2\rightarrow1$ bifurcation merges a trivial circle with the special circle, then the relation between $e_s(P)$ and $e_{s'}(P)$ depends on the specific position of $P$ and the trivial circle $P$: 
 \begin{enumerate}
 	\item If $P$ is not encircled by the trivial circle, then $e_{s'}(P) = e_s(P)$, and the corresponding differential map is the multiplication $m: V_{e_s(P)} \otimes V \rightarrow V_{e_s(P)}$, where $V_{e_s(P)}$ has the structure of a trivial $V$-module as before. 
 	\item If $P$ is encircled by the trivial circle, then $e_{s'}(P) = e_s(P) +1 $ mod $2$. We denote the corresponding differential map by $f_m : V_0 \otimes V \rightarrow V_1$ if $e_s(P)=0$, and by $g_m: V_1 \otimes V \rightarrow V_0$ if $ e_s(P)=1$.
 \end{enumerate}
The situation of $1\rightarrow2$ bifurcations is similar. If the $1\rightarrow2$ bifurcation splits a trivial circle, then $e_s(P) = e_{s'}(P)$, and the differential map is the same as the comultiplication $\Delta: V\rightarrow V \otimes V$ in the usual Khovanov homology. If the $1\rightarrow2$ bifurcation splits the special circle into a trivial circle and the new special circle, then again we divide into two cases.

 \begin{enumerate}
 	\item If $P$ is not encircled by the trivial circle, then $e_s(P) = e_{s'}(P)$, and the corresponding differential map is the comultiplication $\Delta: V_{e_s(P)}\rightarrow V_{e_s(P)} \otimes V$, where $V_{e_s(P)}$ has the structure of a trivial $V$-comodule.
 	\item If $P$ is encircled by the trivial circle, then $ e_{s'}(P) = e_s(P) +1$ mod $2$. We denote the corresponding differential map by $f_{\Delta}: V_0 \rightarrow V_1\otimes V$ if $e_s(P)=0$, and by $g_{\Delta}:V_1 \rightarrow V_0 \otimes V$ if $e_s(P)=1$.  
 \end{enumerate}
 
Then, to show $d^2=0$, we use similar strategy by considering singular graphs in $\rp{2}$ with two singular points as in Figure \ref{fig:singular-curves-with-2-singularity}. This time configurations $(a)$ and $(b)$ are impossible. Configurations $(e)$ and $(f)$ are affine as before, so we are left with configurations $(c)$ and $(d)$. They put the requirements \[g_m \circ f_{\Delta} =0,\,\, f_m \circ g_{\Delta}=0,\,\, f_{\Delta} \circ g_m = 0,\,\, g_{\Delta} \circ f_m=0\] on the differential maps. One way to achieve these requirements is starting with a dyad $\alpha = (V_0,V_1,f,g)$, and let \[f_m = m \circ (f\otimes id_V), \,\, g_m = m \circ (g\otimes id_V), \,\, f_{\Delta} = \Delta \circ f,\,\, g_{\Delta} = \Delta \circ g.\]
It can be shown that the homology $\widetilde{Kh}^{\alpha}(L) $ of the chain complex is invariant under Reidemeister moves not crossing the point $P$ as before. Hence we have the following theorem. Recall that $C_P$ is the union of the point $*$ in $\rp{3} \backslash \{*\} = \rp{2}\tilde{\times}I$ and the fiber over $P$ in the twisted $I$-bundle over $P$. 

\begin{theorem}
	$\widetilde{Kh}^{\alpha}(L)$ is a link invariant for $L$ considered as a link inside $\rp{3} \backslash C_P$.
\end{theorem}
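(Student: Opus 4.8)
The plan is to mirror the three-step argument of Section~\ref{sec1.3}. The chain-complex condition $d^2=0$ has already been arranged above: the classification of singular graphs with two singular points (Figure~\ref{fig:singular-curves-with-2-singularity}) reduces $d^2=0$ to the identities $g_m\circ f_\Delta = f_m\circ g_\Delta = f_\Delta\circ g_m = g_\Delta\circ f_m = 0$ --- which hold because $\Delta(y)=y\otimes\coun$ on every $V_i$ together with $f\circ g=g\circ f=0$ force $g_m\circ f_\Delta(x)=m\big(g(f(x))\otimes\coun\big)=0$, and similarly for the other three --- plus the ``mixed'' squares pairing a special-circle bifurcation against a trivial-circle bifurcation, which commute either trivially (when the two bifurcations involve disjoint circles) or by the $m\circ\Delta=0$ over $\f$ identity, exactly as in case 2(a) of the proof in Section~\ref{sec1.2}. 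So it remains to prove independence of the marked point $M$ and invariance under Reidemeister moves disjoint from $C_P$.

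For independence of $M$, I would carry over Proposition~\ref{prop1.3.5} essentially unchanged. For a state $s$, impose the polynomial-algebra structure on the tensor factor $V^{\otimes(k_s-1)}$ indexed by the trivial circles, define $\eta_s$ by $\eta_s(S_1)=S_1$ and $\eta_s(S_i)=S_1+S_i$ for $i\ge 2$, and set $\phi_s=id_{V_{e_s(P)}}\otimes\eta_s$, where now the $V_{e_s(P)}$ factor is attached to the special circle (or to the marked circle, if $M$ sits on the special circle). One then checks that $\phi$ commutes with $d$ on each edge. On trivial-circle $2\to1$ and $1\to2$ edges the verification is word-for-word that of Section~\ref{sec1.3}. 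On the mixed $2\to1$ and $1\to2$ edges the differential is one of $f_m,g_m,f_\Delta,g_\Delta$; since each acts on the special factor only through $f$ or $g$ and on the trivial factors through the usual merge or split, and $\eta_s$ is an algebra map on the trivial factors and the identity on the special one, commutativity reduces to exactly the same generator-by-generator check on the $S_i$. There is no $1\to1$ bifurcation to worry about. Hence $\phi$ is a chain automorphism, and $\widetilde{\mathit{Kh}}^{\alpha}(L)$ does not depend on $M$.

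For Reidemeister invariance, the moves R-IV and R-V do not alter the chain complex at all, so they are automatic. For R-I, R-II, R-III, take the isotopy to be supported in a small disk $D\subset\rp{2}$ with $P\notin D$ and, using the previous paragraph, arrange $M\notin D$. Every circle born or killed inside $D$ is null homologous, bounds a small disk disjoint from $P$ (so it does not change $e_s(P)$), and is never the special circle. Therefore the split map $d_1$ and the merge map $d_3$ appearing in the cancellation and retraction arguments of Section~3.5 of \cite{MR1917056} --- recalled around Figure~\ref{fig:invariance-unde-r2} --- are literally the usual Khovanov $\Delta$ and $m$, so those arguments go through verbatim; the maps $d_2,d_4$ and any special-circle bifurcations occurring elsewhere in the cube play no role in them. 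This proves invariance under all Reidemeister moves disjoint from $C_P$, which is the assertion of the theorem.

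The main obstacle is not a new computation but a bookkeeping point: one must confirm that the special (homologically essential) circle and the maps $f_m,g_m,f_\Delta,g_\Delta$, although present throughout the complex, never enter the \emph{used} part of either the $M$-independence computation or the Reidemeister retractions. Confining each move to a disk away from $P$ --- so that the essential circle merely passes through $D$ without being created, destroyed, or re-linked with $C_P$ --- is what secures this, and the step deserving the most care is checking that the standard R-II and R-III retractions of \cite{MR1917056} invoke only properties of the merge and split maps, never of the remaining edges.
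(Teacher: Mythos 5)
Your proposal is correct and follows the same route the paper itself (only) sketches for this theorem: verify $d^2=0$ via the singular graphs of Figure \ref{fig:singular-curves-with-2-singularity}, where $(a)$,$(b)$ cannot occur, $(e)$,$(f)$ are affine, and $(c)$,$(d)$ reduce to the four vanishing compositions $g_m\circ f_\Delta=f_m\circ g_\Delta=f_\Delta\circ g_m=g_\Delta\circ f_m=0$, which hold for the choices $f_m=m\circ(f\otimes id_V)$, $f_\Delta=\Delta\circ f$, etc.; then run the Reidemeister arguments of Section \ref{sec1.3} inside a disk avoiding $P$. Two small points: the $M$-independence step is vacuous in this setting, since the construction of Section \ref{section2.7} has no marked point --- the essential circle canonically carries the factor $V_{e_s(P)}$; and in the R-I/R-II step your claim that $d_1,d_3$ are ``literally'' the usual Khovanov $\Delta$ and $m$ fails when the moving strand lies on the essential circle, where they are instead the trivial (co)module maps $\Delta(y)=y\otimes\coun$ and $m(y\otimes\un)=y$ --- but, as you note at the end, the retraction uses only properties shared by both, so the argument is unaffected.
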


The issue is that the homology depends on the choice of $P$ in a subtle way. As $L$ is non-trivial in $H_1(\rp{3},\mathbb{Z})$, it doesn't bound a Seifert surface, and we no longer have the subdivision $\rp{2} \backslash L = R_0 \bigsqcup R_1$ as before. We can't make a canonical choice of $P$ for a link projection, so the homology is only defined for a link with a choice of an extra point $P$ outside its link projection in $\rp{2}$.

\begin{remark}
	 The space $\rp{3} \backslash C_P$ is homeomorphic to the solid torus, which is the twisted $I$-bundle over a M\"obius band. We can also view the solid torus as the trivial $I$-bundle over an annulus and consider the link projection to the annulus. This leads to another invariant called annular Khovanov homology. See \cite{MR2113902}, \cite{MR3035332}, and \cite{MR2728482}.
	
\end{remark}

 \section{Heegaard Floer homology of branched double covers of $\rp{3}$}

In \cite{MR2141852}, Ozsv\'ath and Szab\'o introduced a spectral sequence associated to a link $L \subset S^3$ converging to the Heegaard Floer homology $\widehat{HF}(\Sigma(S^3,L),\mathbb{F}_2)$ of the branched double cover $\Sigma(S^3,L)$, whose $E^2$ page consists of the reduced Khovanov homology of the mirror of $L$.  In this section, we are going to extend this construction to null homologous links in $\rp{3}$. We will obtain a spectral sequence converging to the Heegaard Floer homology $\widehat{HF}(\Sigma_0(\rp{3},L),\mathbb{F}_2)$ of the even branched double cover $\Sigma_0(\rp{3},L)$ of $\rp{3}$, whose $E^2$ page consists of the Khovanov-type homology $\widetilde{\mathit{Kh}}^{\alpha_{\textit{HF}}}_{\bullet}(m(L))$ of the mirror of $L$, with the dyad $\alpha_{\textit{HF}} = (W,\overline{V},f,g)$ introduced in $(\ref{ahf})$ in section $1.6$.
\subsection{Branched double covers of $\rp{3}$}
$\label{sec2.1}$

For a link $K$ in a $3$-manifold $M$, the branched double covers $ \Sigma_h(M,K) $ are classified by the set of maps\[\left\{ h :\pi_1(M\backslash K) \longmapsto \mathbb{F}_2 \,\, \vert \,\, h([m_i] )=1\right\},\]
where $m_i$ is the meridian of the $i$th component of $K$. It is the same as the set of maps \[\left\{ h :H_1(M\backslash K,\mathbb{Z}) \longmapsto \mathbb{F}_2 \,\, \vert \,\, h([m_i] )=1\right\},\] as $\mathbb{F}_2$ is abelian. For the purpose of this paper, we will discuss the branched double covers $\Sigma(\rp{3}, K)$ when $K$ is a null homologous link in $\rp{3}$. Let's compute $H_1(\rp{3}\backslash K,\mathbb{Z})$ for these $K$ first.

\begin{lemma}
	
	Let $K$ be a null homologous link in $\rp{3}$ with $n$ component. Then, 
	we have 
	\begin{equation*}
	H_1(\rp{3}\backslash K,\mathbb{Z}) =
		\begin{cases}
		\mathbb{Z}^n \oplus\mathbb{Z}/2, &\text{ if each component of }K \text{ is null homologous} ,\\
		\mathbb{Z}^n, & \text{otherwise.  } 
		\end{cases}       
	\end{equation*} 
Define $M =\langle [m_1],...,[m_n] \rangle$ as the submodule of $H_1(\rp{3}\backslash K,\mathbb{Z})$ generated by the meridians of each component of $K$. Then in both cases, $M$ is a submodule of index $2$ in $H_1(\rp{3}\backslash K,\mathbb{Z})$. In particular, there are two branched double covers $\Sigma_h(\rp{3},K)$, determined by $h([l])$ for some $[l]\in H_1(\rp{3}\backslash L,\mathbb{Z})$ which is not in $M$.

\end{lemma}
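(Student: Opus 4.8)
The plan is to compute $H_1(\rp{3}\backslash K,\mathbb{Z})$ by a Mayer--Vietoris argument, splitting $\rp{3}$ into a tubular neighborhood $\nu(K)$ of $K$ and its complement $X = \rp{3}\backslash \nu(K)$. First I would record $H_1(\rp{3},\mathbb{Z}) = \mathbb{Z}/2$, $H_2(\rp{3},\mathbb{Z})=0$, and note $\nu(K) \simeq \bigsqcup_{i=1}^n S^1$ with $\partial\nu(K) = \bigsqcup_{i=1}^n T^2_i$. Applying Mayer--Vietoris to $\rp{3} = X \cup \nu(K)$ with intersection $\bigsqcup T^2_i$ gives an exact sequence
\[
\bigoplus_i H_2(T^2_i) \to H_2(X)\oplus H_2(\nu K) \to H_2(\rp{3}) \to \bigoplus_i H_1(T^2_i) \to H_1(X)\oplus H_1(\nu K) \to H_1(\rp{3}) \to 0.
\]
Since $H_2(\rp{3})=0$ and $H_1(\nu K) = \mathbb{Z}^n$, this reads $\bigoplus_i(\mathbb{Z}^2) \to H_1(X)\oplus\mathbb{Z}^n \to \mathbb{Z}/2 \to 0$. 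The map from each $H_1(T^2_i)=\brac{[m_i],[l_i]}$ to $H_1(\nu K)=\mathbb{Z}^n$ sends $[m_i]\mapsto 0$ and $[l_i]\mapsto$ the $i$-th generator (longitude bounds the core); so after accounting for the surjectivity of the torus boundary maps, one extracts the rank and torsion of $H_1(X)$. The rank is $n$: the $n$ meridians are homologically independent in $X$ because $K$ is null homologous, so each $[m_i]$ is dual (via linking number / Seifert surface intersection, cf. Definition \ref{def1.3.3}) to a class detecting only that component. The distinction between the two cases comes from whether the $\mathbb{Z}/2 = H_1(\rp{3})$ is "absorbed" into the complement or not: if every component $K_i$ is individually null homologous, then each $[l_i]$ maps to $0$ in $H_1(\rp{3})$, and the generator of $H_1(\rp{3},\mathbb{Z}/2)$ persists as a $\mathbb{Z}/2$ summand in $H_1(X)$; if some components are nontrivial in $H_1(\rp{3},\mathbb{Z})$ (necessarily an even number of them, since $K$ is null homologous overall), then a suitable combination of their longitudes realizes the generator of $H_1(\rp{3})$ as an \emph{infinite-order} class in $H_1(X)$, so no $\mathbb{Z}/2$ survives and $H_1(X)=\mathbb{Z}^n$.

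Next I would identify the submodule $M = \brac{[m_1],\dots,[m_n]}$. In the first case, $M$ is the $\mathbb{Z}^n$ free part and the quotient $H_1(X)/M \cong \mathbb{Z}/2$, so $M$ has index $2$. In the second case, one shows $H_1(X)/M \cong \mathbb{Z}/2$ as well: the classes $[m_i]$ span a finite-index subgroup of $\mathbb{Z}^n$, and the index is exactly $2$ because a longitude $[l_j]$ of a homologically nontrivial component satisfies $2[l_j] \in M + (\text{meridional relations from a Seifert surface of }K)$ — more precisely, pairing with the Seifert surface $F$ of $K$ (which exists by the Lemma preceding Definition \ref{def1.3.3}) gives a homomorphism $H_1(X,\mathbb{Z})\to\mathbb{Z}$ vanishing on $M$ and taking all values, while the $\mathbb{Z}/2$ ambiguity in $\rp{3}$ accounts for the remaining factor of $2$. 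In both cases $[H_1(X):M]=2$, so there is a unique nontrivial homomorphism $h\colon H_1(X)\to \mathbb{F}_2$ with $h([m_i])=0$ for all $i$; wait — the classification in Section \ref{sec2.1} requires $h([m_i])=1$, and the branched double cover corresponds to the \emph{affine} set $\{h : h([m_i])=1\}$, which is a torsor over $\mathrm{Hom}(H_1(X)/M,\mathbb{F}_2) \cong \mathbb{F}_2$, hence has exactly two elements. I would state it in the torsor form to match the earlier discussion: the two covers are distinguished by the value $h([l])$ on any class $[l] \notin M$.

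The main obstacle I anticipate is the second case ($K$ null homologous but with some components nontrivial in $H_1(\rp{3})$): there the bookkeeping in Mayer--Vietoris is more delicate because one must track how the $\mathbb{Z}/2$ of $\rp{3}$ interacts with the longitudes, and one must verify carefully that the meridians still span a rank-$n$ (not rank-$(n-1)$ plus torsion) subgroup and that the index is exactly $2$ rather than $1$ or $4$. The cleanest way around this is probably to use the Seifert surface $F$ of $K$ directly: intersection with $F$ defines $\lambda\colon H_1(\rp{3}\backslash K,\mathbb{Z})\to\mathbb{Z}$ with $\lambda([m_i])=1$ for all $i$ (suitably oriented) — no, $\lambda([m_i])=\pm 1$ summed appropriately — giving control on $M$ from one side, while the surjection $H_1(\rp{3}\backslash K)\to H_1(\rp{3})=\mathbb{Z}/2$ and the relation $\sum [K_i]=0$ handle the torsion from the other side; combining these two maps exhibits $H_1/M$ as exactly $\mathbb{Z}/2$. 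I would present the first case in full and then indicate how the Seifert-surface pairing adapts the argument to the second case, since the conceptual content is the same and only the homological algebra changes.
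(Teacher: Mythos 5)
Your route is genuinely different from the paper's: the paper takes a genus-one Heegaard splitting $\rp{3}=U_1\cup_f U_2$ with $K\subset U_2$, writes down an explicit presentation $H_1(\rp{3}\backslash K,\mathbb{Z})=\langle [m_1],\dots,[m_n],[l_2]\rangle/\langle 2[l_2]-\sum c_i[m_i]\rangle$ (where $c_i=[K_i]\cdot[l_2]$), and reads everything off from that, whereas you use Mayer--Vietoris on the tubular-neighborhood decomposition. Your approach does buy something: the cokernel of $\bigoplus_i H_1(T^2_i)\to H_1(X)\oplus H_1(\nu K)$ is exactly $H_1(X)/M$, so exactness at $H_1(\rp{3})=\mathbb{Z}/2$ gives the index-two statement essentially for free, and the short exact sequence $0\to\mathbb{Z}^{2n}\to H_1(X)\oplus\mathbb{Z}^n\to\mathbb{Z}/2\to 0$ pins $H_1(X)$ down to one of $\mathbb{Z}^n$ or $\mathbb{Z}^n\oplus\mathbb{Z}/2$. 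Up to that point your argument is sound and arguably cleaner than the paper's for the index claim.

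However, there are two genuine gaps. First, deciding \emph{which} of the two groups occurs in each case is asserted rather than proved: ``the generator of $H_1(\rp{3})$ persists as a $\mathbb{Z}/2$ summand'' and ``no $\mathbb{Z}/2$ survives'' do not follow from anything you have established (a surjection onto $\mathbb{Z}/2$ does not produce torsion, and an infinite-order lift does not exclude torsion). To finish the first case you must actually exhibit a $2$-torsion element, e.g.\ take $x$ lifting the generator of $H_1(\rp{3})$, note $2x\in M$, and use the linking-number homomorphisms $\lambda_i$ coming from Seifert surfaces of the individual components $K_i$ to show $2x=2\sum\lambda_i(x)[m_i]$; this argument requires each $K_i$ to be null homologous, and in the second case those Seifert surfaces do not exist, so your appeal to ``Seifert surface intersection'' to separate the meridians and to kill torsion needs repair (e.g.\ via rational Seifert surfaces for $2K_i$, or by falling back on an explicit presentation as the paper does). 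Second, your torsor argument for the final claim tacitly assumes the set $\{h: h([m_i])=1\ \forall i\}$ is nonempty; this is exactly where the hypothesis that $K$ is null homologous enters (the paper's remark after the lemma shows the set is empty when $[K]$ generates $H_1(\rp{3},\mathbb{Z})$), so the consistency of the assignment $[m_i]\mapsto 1$ with the relations among the meridians must be checked, not assumed.
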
 
\begin{proof}
	Consider the Heegaard splitting $\rp{3} = U_1 \cup_f U_2$, where each $U_i$ is a solid torus, and $f:\partial U_1 \longmapsto \partial U_2$ is the map sending the meridian $\mu_1$ of $U_1$ to $-\mu_2 + 2l_2$, where $\mu_2$ and $l_2$ are the meridian and longitude of $U_2$. By isotopy, we can assume that $K$ lies entirely in $U_2$. Now, we can construct $\rp{3} \backslash K$ by gluing $U_1$ to $U_2 \backslash K$ along $f$. For $U_2 \backslash K$, we compute its homology in a way similar to computing the Wirtinger presentation of $\pi_1(S^3 \backslash K)$. View $U_2=A \times I$ as the trivial $I-$bundle over an annulus $A$, and consider the link projection of $K$  to $A$. We can build $U_2\backslash K$ starting from $A \times I$, gluing a tube $S^1 \times I$ for each arc in the projection, then a disk for each crossing in the projection and finally a $B^3$. See for example Chapter 11 in \cite{lickorish2012introduction} or problem 22 in Section 1.2 in \cite{Hatcher:478079} for a detailed description. The only difference is that we start with a solid torus $A\times I$ instead of a ball $B^3$. From this cell decomposition, we have $H_1(U_2 \backslash K,\mathbb{Z}) = \mathbb{Z}^{n+1} =\langle [m_1], [m_2],...,[m_n], [l_2] \rangle $, generated by the meridians $m_i$ of each component of $L$ and the longitude $l_2$ of $U_2$. Now gluing $U_1$ along $f$ adds the relation $-[\mu_2]+2[l_2]$ to $H_1(U_2\backslash K,\mathbb{Z})$, so we need to express $[\mu_2]$ in terms of $[m_i]$. For each component $K_i$ of $K$, we have $[K_i] = c_i[l_2]$ in $H_1(U_2,\mathbb{Z})$ for some $c_i$. View $U_2 = D^2 \times S^1$ as the trivial disk bundle over $S^1$, then the algebraic intersection number of $K_i$ with a generic fiber $D^2$ is $c_i$. So the punctured disk $D^2 \backslash (D^2 \cap K)$ gives a relation \[ [\mu_2] = \sum _{i=1}^n c_i [m_i]\] in $H_1(U_2 \backslash K,\mathbb{Z})$, and \[H_1 (\rp{3} \backslash K,\mathbb{Z}) = \langle [m_1], [m_2],...,[m_n], [l_2] \rangle/  \langle -\sum_{i=1}^n c_i[m_i]+2[l_2] \rangle .\] Now we discuss the cases whether each component of $K$ is null homologous or not.
	\begin{enumerate}
		\item Suppose each component $K_i$ of $K$ is null homologous, then all the $c_i$s are even integers, so \[H_1 (\rp{3} \backslash K,\mathbb{Z})  = \mathbb{Z}^n \oplus \mathbb{Z}_2,\] and $M = \langle [m_1], [m_2],...,[m_n] \rangle$ is a submodule of index $2$.
		\item Suppose there is some component of $L$ which generates $H_1(\rp{3},\mathbb{Z})$, then there is an even number of $i$ such that $c_i$ is odd, so \[ H_1 (\rp{3} \backslash K,\mathbb{Z})  = \mathbb{Z}^n\] by some change of variables, and $M$ is again a submodule of index 2. 
	\end{enumerate} 
From the description of $ H_1 (\rp{3} \backslash K,\mathbb{Z})$, it is easy to see there are two maps $h: H_1(\rp{3}\backslash K, \mathbb{Z}) \longmapsto \mathbb{F}_2$ in each case that send all the meridians $[m_i]$ to 1, so there are two branched double covers $\Sigma_h(\rp{3},K)$ in each case. If we take some circle $l \in \rp{3}\backslash K$ such that $[l]\in H_1(\rp{3}\backslash K)$ is not in the submodule $M$ (i.e. $l$ won't be null homologous in $\rp{3}$ if we fill in $K$), then the map $h:H_1(\rp{3}\backslash K, \mathbb{Z})$, hence the branched double cover $\Sigma_h(\rp{3},K)$, is determined by $h([l])$, as $[l]$ together with the $[m_i]s$ generate $ H_1 (\rp{3} \backslash K,\mathbb{Z})$.  \end{proof}
\begin{remark}
	If $K$ is an oriented link in $\rp{3}$ which generates $H_1(\rp{3},\mathbb{Z})$, then we have $[\mu_2] = \sum_i c_i[m_i]$ such that $\sum_i ci$ is odd.
	 If $h:H_1(\rp{3}\backslash K,\mathbb{Z}) \longmapsto \mathbb{F}_2$ were a map such that $h([m_i])=1$ for all $i$, then we would have \[2h([l_2]) =h(2[l_2]) = h(\sum_{i=1}^{n}c_i[m_i]) = \sum_{i=1}^{n}c_i,\] a contradiction. So there are no branched double cover of $\rp{3}$ over $K$ if $[K]$ generates $H_1(\rp{3},\mathbb{Z})$.
\end{remark}

Denote the projection from the branched double cover associated to $ h:H_1 (\rp{3} \backslash K,\mathbb{Z}) \longmapsto \mathbb{F}_2$ to $\rp{3}$ by $p_h:\Sigma_h(\rp{3},K) \longmapsto \rp{3}$. The condition $h([l])=0$ is equivalent to requiring the preimage $p_h^{-1}(l)$ is a disjoint union of two circles instead of one large circle. 

\begin{definition}
	Let $K$ be a null homologous link in $\rp{3}$, and $l$ be a circle in $\rp{3} \backslash K$ such that $[l]$ is not in the submodule $M\subset H_1 (\rp{3}\backslash K,\mathbb{Z})$. We define $\Sigma(\rp{3}, K, l)$ as the branched double cover $\Sigma_h(\rp{3},K)$ such that $h([l])=0$.
\end{definition}

 For a null homologous link $K$ in $\rp{3}$, recall we divided the complement of its projection $L$ in $\rp{2}$ into two regions $\rp{2}\backslash L= R_0 \sqcup R_1$, picked a point $P$ in $\rp{2}\backslash L$, and defined a circle $C_P$ associated to it. See the discussion below Definition $\ref{def1.3.3}$ and the Equation $\ref{eq1.3.0.1}$. $C_P$ is such a circle that $[C_P]\in H_1(\rp{3}\backslash K,\mathbb{Z})$ is not in the submodule $M$.  

\begin{definition}
	\label{def2.1.3}
	For a  null homologous link $K$ in $\rp{3}$, we will call $\Sigma( \rp{3},K,C_P)$ the \textbf{even branched double cover} of $\rp{3}$ over $K$ if $P\in R_0$, denoted by $\Sigma_0(\rp{3},K)$, and the \textbf{odd branched double cover} where if $P\in R_1$, denoted by $\Sigma_1{(\rp{3},K)}$.
\end{definition}

 Note that it is well defined, as for a different choice $P'\in R_0$,  $[C_P]-[C_P']$ is a sum of even number of meridians $[m_i]$ in $H_1 (\rp{3} \backslash K,\mathbb{Z})$ by the definition of $R_0$, so $h([C_P]) = h([C_P'])$.  
 
 Now we describe all the other branched double covers for the smoothings $L_s$ of the knot projection $L$ of $K$.
 
 \begin{lemma}
 	\label{lem2.1.3}
 	Let $s\in \{0,1\}^n$ be a state for the link projection $L$ with $n$ crossings, and $L_s$ be the corresponding smoothing of $L$. Then the branched double cover   \begin{equation*}
 		\Sigma(\rp{3},L_s,C_P) =
 		\begin{cases}
 			(\rp{3}\#\rp{3})\#(S^1\times S^2)^{\#(k_s-1)} &\text{ if } e_s(P)=0,\\
 			(S^1\times S^2)^{\#k_s} & \text{ if } e_s(P)=1,
 		\end{cases}       
 	\end{equation*} 
 where $k_s$ is the number of circles in $L_s$, and $e_s(P)$ is the number of circles in $L_s$ encircling $P$ mod 2.
 \end{lemma}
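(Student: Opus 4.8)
The plan is to realize $\Sigma(\rp{3},L_s,C_P)$ as a combination of two pieces — the double branched cover of a ball over an unlink, and an unbranched double cover of $\rp{3}$ minus a ball — and to read off which cover of the latter occurs from the defining monodromy condition $h([C_P])=0$. First I would set up the geometry: by Lemma \ref{lem:1.1.1} each component of $L_s$ bounds a disk in $\rp{2}$, so, using the product structure of the twisted $I$-bundle over each such disk, one can isotope $L_s$ inside $\rp{3}=\rp{2}\,\widetilde{\times}\,I$ so that its components $c_1,\dots,c_{k_s}$ bound pairwise disjoint embedded disks $D_1,\dots,D_{k_s}$. Thus $L_s$ is an unlink in $\rp{3}$ and lies in an embedded ball $B\subset\rp{3}$ (a regular neighborhood of $\bigcup_i D_i$). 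Since $C_P$ represents the generator of $H_1(\rp{3};\mathbb{Z})=\mathbb{Z}/2$ it cannot be pushed into $B$, and I would arrange it to meet $B$ in a single arc $\alpha$ and $W:=\rp{3}\setminus\operatorname{int}(B)$ in a single arc $\beta$, with $\alpha$ crossing exactly those disks $D_i$ whose boundary $c_i$ encircles $P$.

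Write $\Sigma(\rp{3},L_s,C_P)=\Sigma_h$, where $h(m_i)=1$ for all $i$ and $h([C_P])=0$. Since $L_s\subset B$, the cover splits as $\Sigma_h=\Sigma_h(B,L_s)\cup_{\widetilde{\partial B}}\widetilde W$, where $\widetilde W\to W$ is the unbranched double cover classified by $h|_{\pi_1W}$; and since $\partial B=S^2$ is simply connected, $\widetilde{\partial B}=S^2\sqcup S^2$, so the two pieces are glued along two $2$-spheres. The crux is to compute $h|_{\pi_1W}$. By Mayer–Vietoris $H_1(\rp{3}\setminus L_s)=\mathbb{Z}^{k_s}\oplus\mathbb{Z}/2$, with $\mathbb{Z}^{k_s}$ generated by the meridians $m_i$ and $\mathbb{Z}/2$ generated by the class $t_W$ of the loop obtained by closing up $\beta$; moreover the loop obtained by closing up $\alpha$ inside $B\setminus L_s$ has class $\sum_{i:\,c_i\text{ encircles }P}m_i$, whence $[C_P]=\sum_{i:\,c_i\text{ encircles }P}m_i+t_W$. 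Therefore $0=h([C_P])=e_s(P)+h(t_W)$, i.e. $h(t_W)=e_s(P)$ — and this is exactly the point at which the parity $e_s(P)$, hence the distinction between the even and odd branched double covers, enters.

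Next I would identify and reassemble the pieces. The piece $\Sigma_h(B,L_s)$ is the double branched cover of a ball over an unlink of $k_s$ components, namely the closed manifold $\Sigma(S^3,\text{unlink of }k_s\text{ components})=\#^{k_s-1}(S^1\times S^2)$ with the two-ball preimage of a disjoint ball removed; hence it is $\#^{k_s-1}(S^1\times S^2)$ with two open balls removed, with boundary $S^2\sqcup S^2$. If $e_s(P)=0$, then $h|_{\pi_1W}$ is trivial, so $\widetilde W=W\sqcup W=(\rp{3}\setminus B^3)\sqcup(\rp{3}\setminus B^3)$; capping the two sphere boundary components of $\Sigma_h(B,L_s)$ with these two copies realizes two connect sums with $\rp{3}$, yielding $(\rp{3}\#\rp{3})\#(S^1\times S^2)^{\#(k_s-1)}$. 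If $e_s(P)=1$, then $\widetilde W$ is the connected double cover of $\rp{3}\setminus B^3$, i.e. $S^3$ minus two balls, which is $S^2\times I$; gluing this $S^2\times I$ between the two sphere boundary components is an internal connect sum adding one $S^1\times S^2$ factor, yielding $\#^{k_s-1}(S^1\times S^2)\#(S^1\times S^2)=(S^1\times S^2)^{\#k_s}$.

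The main obstacle I expect is the monodromy bookkeeping of the second paragraph: making precise the splitting $[C_P]=\sum_i m_i+t_W$ and verifying that the defining condition $h([C_P])=0$ of $\Sigma(\rp{3},L_s,C_P)$ becomes $h(t_W)=e_s(P)$, since the whole statement turns on this. By comparison, the first step (that $L_s$ is genuinely an unlink in $\rp{3}$, not merely a disjoint union of unknotted circles) needs only a short isotopy argument, and the remaining steps rest on the standard identification $\Sigma(S^3,\text{unlink of }k\text{ components})=\#^{k-1}(S^1\times S^2)$ and on elementary gluings of $S^2\times I$ and $\rp{3}\setminus B^3$ along $2$-sphere boundaries. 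Alternatively one may induct on $k_s$ using the split-unknot principle — adjoining a split unknot to the branch locus connect-sums the cover with $S^1\times S^2$ — after treating $k_s=1$ directly, the case in which the summands $\rp{3}\#\rp{3}$ (for $e_s(P)=0$) and $S^1\times S^2$ (for $e_s(P)=1$) first appear.
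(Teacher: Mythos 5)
Your proposal is correct and follows essentially the same route as the paper: both arguments come down to identifying which unbranched double cover of the complement of a ball containing the unlink $L_s$ occurs --- governed by the parity $e_s(P)$ via your relation $[C_P]=\sum_i m_i+t_W$, which is the paper's identity $[C_{P'}]=[C_P]+\sum_i c_i[m_i]$ (with $P'$ an unencircled point) in disguise --- and then attaching the standard branched double cover of a ball over an unlink. The only difference is organizational: the paper builds the branched piece by induction on $k_s$, adding one circle at a time by $0$-surgery, whereas you glue in the whole piece $\#^{k_s-1}(S^1\times S^2)$ minus two balls in one step.
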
  
\begin{proof}
	 If $e_s(P)=0$, then pick a point $P'\in \rp{2} \backslash L_s$ which is encircled by none of the circles in $L_s$, so $e_s(P')=0$ as well. Consider a path $\gamma $ in $\rp{2}$ connecting $P$ and $P'$ such that $\gamma$ intersects $L_s$ transversely. Let $c_i$ be the algebraic intersection number of $\gamma$ with the $i$th circle in $L_s$, then \[[C_{P'}] = [C_P] + \sum_{i=1 }^{k_s}c_i[m_i]  \] in $H_1(\rp{3}\backslash L_s, \mathbb{Z})$. Note that $e_s(P')-e_s (P) = \sum_i c_i$ mod $2$ by the definition of $e_s$, so $\sum_i c_i =0 $ mod 2. This implies $h([C_{P'}]) = h([C_P])=0$ as $h([m_i])=1$, so $\Sigma(\rp{3},L_s,C_P) = \Sigma(\rp{3},L_s,C_{P'})$, and we will look for $\Sigma(\rp{3},L_s,C_{P'})$. 
	 
	 Now we find the branched double cover $\Sigma(\rp{3}, L_s,C_{P'})$ by induction on $k_s$. When $k_s=0$, the link $L_s$ is empty, and $\Sigma(\rp{3},\emptyset,C_{P'})$ is the double cover $\rp{3}\sqcup \rp{3}$, as the preimage $p_h^{-1}(C_{P'})$ is a disjoint union of two circles. When $k_s=1$, because $P'$ is not encircled by this circle, we can pick some ball $B^3 \subset \rp{3} \backslash C_{P'}$ containing this circle. The effect of adding one circle to the branching locus is the same as replacing the preimage $p_h^{-1}(B^3) = B^3 \sqcup B^3$ by the branched double cover of $B^3$ over the circle, which is $D^1 \times S^2$. It is the same as doing a $0$-surgery to $\rp{3}\sqcup \rp{3}$ with one $B^3$ in each copy of $\rp{3}$. So we get $\Sigma(\rp{3}, L_s,C_{P'})=\rp{3}\# \rp{3}$ for $k_s=1$. Note that in the process of $0$-surgery, we didn't change the preimage of $C_{P'}$ at all, which stays as a disjoint union of two circles. For each of the rest circles in  $L_s$, we apply a similar $0$-surgery avoiding the preimage of $C_{P'}$. For connected 3-manifolds, doing $0$-surgery is the same as taking connected sum with a copy of $S^1\times S^2$, so we obtain $\Sigma(\rp{3},L_s,C_{P'})=(\rp{3}\#\rp{3})\#(S^1\times S^2)^{\#(k_s-1)}.$
	 
	 If $e_s(P)=1$, the proof is very similar. The only difference is that if we pick some $P'\in \rp{2}\backslash L$ encircled by none of the circles in $L_s$ this time, then $e_s(P') -e_s(P)=1$ mod $2$, and $h([C_P']) = 1$ instead of $0$. Hence we start with the other double cover $S^3$ of $\rp{3}$ instead of $\rp{3}\sqcup \rp{3}$ for the empty knot, and we get  $\Sigma(\rp{3},L_s,C_{P'})=(S^1\times S^2)^{\#k_s}.$ 
\end{proof}

Now we review the relation between the branched double covers $\Sigma({\rp{3}, L,C_P})$, $\Sigma(\rp{3},L_0,C_P)$ and $\Sigma(\rp{3},L_1,C_P)$, where $L_0,L_1$ are the link projections obtained from $L$ by $0$ or $1$-smoothings at one crossing, as introduced in section 2 in \cite{MR2141852}. Note the convention they used for $0$ and $1$ smoothings is the reverse of that in this paper.

\begin{definition}
	 Let $M$ be an oriented $3$-manifold with torus boundary and three simple, closed curves $\alpha,\beta,\gamma $ in $\partial M$ with algebraic intersection numbers \[\#(\alpha \cap \beta) = \#(\beta\cap \gamma) =\#(\gamma\cap\alpha)=-1. \] A \textbf{triad} of $3$-manifolds $(Y_{\alpha},Y_{\beta},Y_{\gamma})$ is an ordered triple of $3$-manifolds, such that there exists an $M$ and $(\alpha,\beta,\gamma)$ as above such that $Y_i$ is obtained from $M$ by attaching a solid torus along the boundary with the meridian mapped to $i$, for $i = \alpha,\beta,\gamma$ respectively. 
\end{definition}

The following lemma is shown as 	 Proposition 2.1 in \cite{MR2141852}, except we switch $0$ and $1$ smoothings because of the different conventions.
\begin{lemma}
	\label{lem2.1.5}
	 The branched double covers $(\Sigma({\rp{3}, L},C_P),\Sigma(\rp{3},L_1,C_P),\Sigma(\rp{3},L_0,C_P))$ forms a triad of $3$-manifolds.
\end{lemma}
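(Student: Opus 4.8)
The plan is to follow the proof of Proposition 2.1 in \cite{MR2141852} almost verbatim, localizing everything near the distinguished crossing and checking only that the extra branching datum carried by $C_P$ causes no trouble.

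First I would pick a small ball $B\subset\rp{3}$ containing the distinguished crossing, chosen so that $L\cap B$ is the standard one-crossing tangle while $L_0\cap B$ and $L_1\cap B$ are its two resolutions, so that $L$, $L_0$ and $L_1$ coincide on $N:=\rp{3}\setminus\mathrm{int}(B)$, and---crucially---so that $B$ is disjoint from the circle $C_P$. Write $T:=L\cap N=L_0\cap N=L_1\cap N$ for the common tangle in $N$. Each branched double cover $\Sigma(\rp{3},L_\bullet,C_P)$ is classified by a homomorphism $h\colon H_1(\rp{3}\setminus L_\bullet;\f)\to\f$ sending every meridian $[m_i]$ to $1$ and $[C_P]$ to $0$; since all of these homology classes are represented by curves lying in $N\setminus T$, the homomorphism $h$ restricts to one and the same $h'\colon H_1(N\setminus T;\f)\to\f$ in all three cases. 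Therefore the double cover $\widetilde N$ of $N$, branched along $T$ and determined by $h'$, is a single $3$-manifold that does not depend on which of the three diagrams we started from; its boundary is the double cover of $\partial B\cong S^2$ branched over the four points of $T\cap\partial B$, hence a torus.

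Next I would note that $\Sigma(\rp{3},L,C_P)$, $\Sigma(\rp{3},L_1,C_P)$ and $\Sigma(\rp{3},L_0,C_P)$ are obtained from $\widetilde N$ by gluing back, along $\partial\widetilde N$, the double branched cover of $(B,L\cap B)$, $(B,L_1\cap B)$, $(B,L_0\cap B)$ respectively. Because $h$ sends every meridian to $1$ and $C_P$ avoids $B$, the relevant covering of $B$ is the standard one, so these three filling pieces are exactly the local branched double covers appearing in \cite{MR2141852}: each is a solid torus, and the three curves $\alpha,\beta,\gamma\subset\partial\widetilde N$ onto which their meridians are sent pairwise intersect with algebraic intersection number $-1$, in the order corresponding to the crossing, the $1$-smoothing and the $0$-smoothing (this ordering matches our conventions, which reverse the $0/1$ labels of \cite{MR2141852}). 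Taking $M:=\widetilde N$ together with this triple of slopes in the definition of a triad then exhibits $(\Sigma(\rp{3},L,C_P),\Sigma(\rp{3},L_1,C_P),\Sigma(\rp{3},L_0,C_P))$ as a triad of $3$-manifolds.

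The only point that is genuinely new relative to \cite{MR2141852}, and the step I expect to require the most care, is the consistency of the branching data: one must check that $h'$ is well defined and independent of the choice among $L,L_0,L_1$, which is precisely where the freedom to isotope $C_P$ off the ball $B$ is used (and is the reason the point $P$, equivalently $C_P$, must be held fixed throughout). The remaining, purely local, computation---that each tangle branched cover is a solid torus and that the three gluing slopes pairwise meet once with the stated signs---takes place entirely in $B$ and on the torus $\partial\widetilde N$, is identical to the one in \cite{MR2141852}, and I would cite it rather than repeat it.
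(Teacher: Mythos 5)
Your proposal is correct and follows essentially the same route as the paper: localize in a ball around the changed crossing (kept disjoint from $C_P$), observe that the three manifolds are obtained from a common branched double cover of the tangle complement, with torus boundary, by gluing in the three solid-torus branched covers of the ball, and cite \cite{MR2141852} for the local slope computation. Your explicit check that the classifying homomorphism restricts consistently to the tangle complement is a point the paper leaves implicit, but it is the same argument.
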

\begin{proof}
	Consider some ball $B^3$ containing the changed crossing of $L$ in $\rp{3}$. The branched double cover of $S^2$ branching over $4$ points is a torus, and the branched double covers of $B^3$ branching over $\backoverslash, \hsmoothing,\smoothing$ are all solid torus, with meridians obtained by double cover of one of the arcs pushed to the boundary of $B^3$. Therefore, they form a triad. Check \cite{MR2141852} for details. 
\end{proof}

We will give a more detailed account of the surgery from $\Sigma(\rp{3}, L_1,C_P)$ to $\Sigma(\rp{3},L_0,C_P)$ in Section $\ref{sec2.3}$.
\subsection{Link surgery spectral sequence of  Heegaard Floer Homology}
$\label{sec2.2}$

 In this section we briefly review the construction of link surgeries spectral sequence in Section $4$ of \cite{MR2141852}, which is a generalization of the surgery long exact sequence associated to a triad of $3$-manifolds $(Y_{\alpha}, Y_{\beta}, Y_{\gamma})$: \[...\longrightarrow\widehat{HF}(Y_{\alpha})\longrightarrow \widehat{HF}(Y_{\beta})\longrightarrow\widehat{HF}(Y_{\gamma}) \longrightarrow ...\]
 
 For the definition and basic facts about Heegaard Floer homology $\widehat{HF}$, see \cite{MR2113020}. We will use the Heegaard Floer homology with coefficients $\mathbb{F}_2$ throughout the paper, so we omit $\mathbb{F}_2$ from the notation.
 
 Let $L = K_1 \cup K_2\cup ...\cup K_n$ be an $n$-component, framed link ($i.e.$ with a choice of longitude $l_i$ for each component $K_i$) in a $3$-manifold $Y$. A \textbf{multi-framing} is an $n$-tuple $I = (s_1,...s_n)$, where each $s_i \in \{\alpha,\beta,\gamma\}$. (In \cite{MR2141852}, the corresponding labels for s are $ \{0,1,\infty\}$. Unfortunately, $0$ and $1$-surgery correspond to the $1$ and $0$-smoothing respectively, so we use $ \{\alpha,\beta,\gamma\}$ instead) For each multi-framing $I$, there is a three-manifold $Y(I)$, obtained from $Y$ by performing $s_i$-framed surgery on the component $K_i$ for $i=1,...,n$. Here, $\alpha $ means the $l_i$-framed surgery, $\beta$ means the $(l_i+m_i)$-framed surgery where $m_i$ is the meridian of $K_i$, and $\gamma$ means no surgery, which is the same as the $m_i$-framed surgery.
 
  We give the set $\{\alpha,\beta,\gamma\}^n$ the lexicographical order, such that $\alpha <\beta <\gamma$. For $I = (s_i)_{i=1...n},I'= (s'_i)_{i=1...n} \in   \{\alpha,\beta,\gamma\}^n$, we call $I'$ an an \textbf{immediate successor} of $I$ if they are only different at one slot $j$, such that $(s_j,s'_j) = (\alpha,\beta) $ or $(\beta, \gamma)$. 
  
  For a sequence of multiframings $I^1<...<I^k$, such that each $I^{i+1}$ is an immediate successor of $I^{i}$, there is an induced map on the Heegaard Floer chain complex \[D_{I^1<...<I^k}:\widehat{CF}(Y(I^1)) \longmapsto \widehat{CF}(Y(I^k))\] defined by counting holomorphic polygons. 
  
  Let $X= \bigoplus_{I\in \{\alpha,\beta,\gamma\}}\widehat{CF}(Y(I))$, endowed with a map $D:X \longmapsto X$ defined by \[D(\xi) = \sum_J \sum _{\{I=I^1 <...<I^k=J\}} D_{I^1<...<I^k}(\xi)\] summing over all sequences $I=I^1<...<I^k=J$  such that $I^{i+1}$ is an immediate successor of $I^i$, for $\xi \in \widehat{CF}(Y(I))$. We can filter $X$ by the lexicographical order, and write $D = \sum_{i=0}^n d_i$, where each $d_i$ is summing over sequences of length $i$ and $d_0$ is the sum of differentials in the chain complex $\widehat{CF}(Y(I))$ for each $I$.
  
  The main inputs from the theory of Heegaard Floer homology are the following two propositions.
  
  \begin{proposition}
  	(Proposition 4.6 in \cite{MR2141852}) The map $D$ satisfies $D^2=0$, so it is a differential map.
  \end{proposition}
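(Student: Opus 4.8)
The plan is to observe that this statement is exactly Proposition 4.6 of \cite{MR2141852}, and that its proof is entirely formal once one knows the geometric input already established in Section \ref{sec2.1}: by Lemma \ref{lem2.1.5}, for each crossing the relevant triple of branched double covers forms a triad of $3$-manifolds, and a triad of $3$-manifolds always underlies a Heegaard diagram carrying a compatible triple of attaching curves. So nothing about $\rp{3}$ enters beyond Lemma \ref{lem2.1.5}; the argument of \cite{MR2141852} applies word for word, and the proposal is to reproduce it.

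First I would fix a Heegaard diagram realizing the whole link surgery. This is a surface $\Sigma$ equipped with an ``outer'' set of attaching circles coming from a Heegaard splitting of $\rp{3}\setminus L$, together with, for each of the $n$ framed components $K_i$, a triple $(\gamma_i^\alpha,\gamma_i^\beta,\gamma_i^\gamma)$ of attaching circles, where $\gamma_i^\beta$ and $\gamma_i^\gamma$ are small Hamiltonian translates of $\gamma_i^\alpha$ arranged so that each pair meets in exactly two points with the algebraic intersection numbers required in the definition of a triad, and so that every multi-diagram obtained by selecting one curve from each triple is weakly admissible. For a chain of immediate successors $I^1<\dots<I^k$, the curves distinguishing consecutive entries are used to count holomorphic $(k{+}1)$-gons, giving $D_{I^1<\dots<I^k}$; a Maslov-index computation shows that only the top-graded generator $\Theta$ of each small triangle can serve as input, so this is a well-defined $\mathbb{F}_2$-linear map of the expected grading, with $D=\sum_{i=0}^n d_i$ and $d_0$ the internal Heegaard Floer differential.

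Next, the heart of the argument is the generalized associativity relation for holomorphic polygon maps: analyzing the ends of the moduli spaces of holomorphic $(k{+}2)$-gons with the relevant boundary data yields, for each pair $I\le J$,
\[
\sum_{\substack{I=I^1<\dots<I^m=K\\ K<I^{m+1}<\dots<I^r=J}} D_{I^{m+1}<\dots<I^r}\circ D_{I^1<\dots<I^m}=0,
\]
where the internal differentials $d_0$ are included among the $D$'s in the usual way; summing over all $J$ gives precisely $D^2=0$. The one place this requires care — and the step I expect to be the main obstacle — is controlling the terms in which a single component $K_i$ passes through all three of $\alpha,\beta,\gamma$: there the relation threatens to pick up a contribution from the triangle count on the triple $(\gamma_i^\alpha,\gamma_i^\beta,\gamma_i^\gamma)$ evaluated on $(\Theta_{\alpha\beta},\Theta_{\beta\gamma})$, and one must show this is exactly $\Theta_{\alpha\gamma}$, i.e.\ that the small triangle is counted once. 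This is the triad input, proved in \cite{MR2141852} by degenerating the relevant holomorphic square and using the intersection-number hypotheses on $(\alpha,\beta,\gamma)$; with it in hand, every remaining term cancels in pairs over $\mathbb{F}_2$, and $D^2=0$ follows.
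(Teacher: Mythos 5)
This proposition is not proved in the paper at all: it is quoted verbatim as Proposition 4.6 of \cite{MR2141852} and used as a black box, so there is nothing internal to compare against. Judged on its own terms, your reconstruction of the Ozsv\'ath--Szab\'o argument has the right overall shape (Heegaard multi-diagram, polygon-counting maps $D_{I^1<\dots<I^k}$, generalized associativity for moduli of holomorphic polygons), but the step you identify as ``the main obstacle'' is stated backwards, and as stated it would make the proof fail rather than succeed.

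First, a setup issue: the three curves $\gamma_i^\alpha,\gamma_i^\beta,\gamma_i^\gamma$ are \emph{not} small Hamiltonian translates of one another. Hamiltonian translates have algebraic intersection number $0$ (two cancelling intersection points), whereas the triad condition requires pairwise algebraic intersection number $-1$; the three curves are genuinely distinct slopes ($\lambda$, $\lambda+\mu$, $\mu$) in a punctured torus, each pair meeting in a single point. More importantly, the key vanishing statement is not that the triangle count on $(\Theta_{\alpha\beta},\Theta_{\beta\gamma})$ equals $\Theta_{\alpha\gamma}$. The differential $D$ is built only from chains of \emph{immediate} successors, so a direct jump $\alpha\to\gamma$ at a single slot never occurs as an input to any $D_{I^1<\dots<I^k}$; if the merged triangle count were $\Theta_{\alpha\gamma}$, the corresponding term in the associativity relation would have nothing to cancel against and $D^2$ would not vanish. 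What one must actually prove (the content of the key lemma in \cite{MR2141852}) is that the polygon counts with all inputs equal to the canonical generators, summed over all chains of immediate successors between two fixed multiframings, are \emph{zero}; this is established by an explicit count in a genus-one model region where the relevant polygons cancel in pairs over $\mathbb{F}_2$. With that correction the rest of your outline (associativity $+$ cancellation of the remaining composition terms) goes through, but as written the central claim is the negation of the fact you need.
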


\begin{proposition}
	(Theorem 4.7 in \cite{MR2141852})  Let $K$ be a framed knot in a $3$-manifold $Y$, and let \[\widehat{f}: \widehat{CF}(Y_0(K)) \longmapsto \widehat{CF}(Y_1(K))\] denote the chain map induced by the cobordism of adding a two-handle. Then the chain complex $\widehat{CF}(Y)$ is quasi-isomorphic to the mapping cone of $\widehat{f}$.
\end{proposition}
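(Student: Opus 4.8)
The plan is to deduce this from the chain-level refinement of the surgery exact triangle in Heegaard Floer homology applied to the triad $(Y,Y_1(K),Y_0(K))$ — in our application this will be the triad of branched covers from Lemma~\ref{lem2.1.5} — so that one honest option is simply to invoke Theorem~4.7 of \cite{MR2141852} verbatim; but let me describe the argument. First I would fix a Heegaard diagram $(\Sigma,\alpha,\beta_0)$ for $Y$ in which the meridian and the chosen longitude of $K$ appear as embedded curves, so that replacing the tuple of attaching circles $\beta_0$ by its surgered translates $\beta_1$ and $\beta_2$ produces Heegaard diagrams for $Y_0(K)$ and $Y_1(K)$. Because the three surgery slopes on the boundary torus of a tubular neighbourhood of $K$ pairwise intersect once, each pair $(\beta_i,\beta_{i+1})$ (indices mod $3$) represents a connected sum of copies of $S^1\times S^2$, whose $\widehat{HF}$ is an exterior algebra with a distinguished top-degree generator $\Theta_i$.

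Next I would build the relevant maps by counting holomorphic polygons on these multi-diagrams: counting triangles through $\Theta_i$ in the triple $(\alpha,\beta_i,\beta_{i+1})$ gives chain maps $F_i\colon \widehat{CF}(Y_i)\to\widehat{CF}(Y_{i+1})$, with $F_0$ identified with the two-handle map $\widehat f$; counting quadrilaterals gives chain homotopies $H_i$ with $F_{i+1}F_i=\partial H_i+H_i\partial$; and counting pentagons gives maps $h_i$ recording the failure of the $H_i$ to be coherent. The holomorphic-polygon gluing and degeneration formalism — the $A_\infty$-type relations for the Floer polygon maps, together with the fact that the triangle counts realizing small isotopies and handleslides are governed by the top generators $\Theta_i$ — then yields, besides $F_{i+1}F_i\simeq 0$, the crucial relation $F_{i+2}H_i+H_{i+1}F_i+\partial h_i+h_i\partial=\Psi_i$, where $\Psi_i\colon\widehat{CF}(Y_i)\to\widehat{CF}(Y_i)$ is a quasi-isomorphism: on the four-fold diagram it equals a composite of triangle maps for the tuples $(\beta_i,\beta_{i+1},\beta_{i+2})$, which is realized by pairing with the top generators and hence is an isomorphism on homology.

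Granting these inputs, the conclusion follows from a purely homological lemma of ``cancellation'' type: whenever $\{C_i\}_{i\in\mathbb{Z}/3}$ are chain complexes equipped with maps $f_i\colon C_i\to C_{i+1}$, homotopies $H_i$ witnessing $f_{i+1}f_i\simeq 0$, and higher homotopies $h_i$ for which the coherence defect $f_{i+2}H_i+H_{i+1}f_i$ is chain homotopic to a quasi-isomorphism $C_i\to C_i$, then for each $i$ the complex $C_{i+2}$ is quasi-isomorphic to the mapping cone of $f_i\colon C_i\to C_{i+1}$. Concretely one checks that $f_{i+1}$ together with $H_i$ assembles into a chain map $\mathrm{Cone}(f_i)\to C_{i+2}$, and the $\Psi_i$-relation forces its mapping cone to be acyclic. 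Taking $C_0=\widehat{CF}(Y_0(K))$, $C_1=\widehat{CF}(Y_1(K))$, $C_2=\widehat{CF}(Y)$ and $f_0=\widehat f$ then gives the proposition.

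The hard part is not the algebraic lemma, which is a routine zig-zag argument once its hypotheses are in hand, but the geometry underneath it: arranging the multi-diagram to be admissible enough that all the polygon moduli spaces are well-behaved, carrying out the degeneration analysis that produces the coherence relation, and — above all — identifying $\Psi_i$ with a quasi-isomorphism. This last point is the technical core of the surgery exact triangle: it rests on the computation of $\widehat{HF}$ of $\#^k(S^1\times S^2)$ and on the fact that pairing with the top-degree generator induces an isomorphism, and it is precisely what upgrades the bare long exact sequence to the chain-level mapping cone statement asserted here.
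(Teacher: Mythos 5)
The paper does not reprove this statement: it is quoted verbatim as Theorem 4.7 of \cite{MR2141852} and treated as one of the two black-box inputs from Heegaard Floer theory, and your sketch is essentially the argument given in that reference (triangle maps through the top-degree generators, quadrilateral homotopies, pentagon counts, the relation making the coherence defect a quasi-isomorphism, and the cone-detection lemma). So your proposal is correct and follows the same route as the cited proof; simply invoking Theorem 4.7, as the paper does, is equally legitimate here.
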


  Then by applying induction on $n$, together with an algebraic lemma on mapping cones (Lemma 4.4 in \cite{MR2141852}), one can prove the following theorem. 
  
  \begin{theorem}
  	\label{th2.2.3}
  	(Theorem 4.1 in \cite{MR2141852}) There is a spectral sequence whose $E^1$ term is \[ \bigoplus _{I \in\{\alpha,\beta\}^n} \widehat{HF}(Y(I)),\] which converges to $\widehat{HF}(Y)$, such that the $d_1$ map is given by adding maps $\widehat{f}_{*}: \widehat{HF}(Y(I)) \longmapsto\widehat{HF}(Y(I')) $ together for all pairs $(I,I')$ where $I'$ is an immediate successor of $I$.
  \end{theorem}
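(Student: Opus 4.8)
The plan is to reproduce the argument of Ozsv\'ath--Szab\'o, since the statement is a transcription of Theorem~4.1 of \cite{MR2141852}; I only outline it. First I would form the total complex $(X,D)$ with $X=\bigoplus_{I\in\{\alpha,\beta,\gamma\}^n}\widehat{CF}(Y(I))$ and $D=\sum_{i\ge 0}d_i$ exactly as set up above, and invoke the preceding proposition to obtain $D^2=0$, so that $(X,D)$ is a chain complex.

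The second step is to reduce $(X,D)$ to the summands indexed by $\{\alpha,\beta\}^n$ and to identify its homology. The key input is the single-knot statement (Theorem~4.7 above): for a framed knot $K$, $\widehat{CF}(Y)$ is quasi-isomorphic to the mapping cone of $\widehat f\colon\widehat{CF}(Y_\alpha(K))\to\widehat{CF}(Y_\beta(K))$, i.e. a ``$\gamma$''-coordinate may be traded for the cone of the $\alpha\to\beta$ map. Applying this one link component at a time, with the bookkeeping of iterated mapping cones supplied by the algebraic lemma (Lemma~4.4 of \cite{MR2141852}), I would prove by induction on $n$ that $(X,D)$ is quasi-isomorphic to the iterated mapping cone $\mathbb{X}$ built only from the summands $\widehat{CF}(Y(I))$ with $I\in\{\alpha,\beta\}^n$, and that this iterated cone computes $\widehat{HF}\big(Y(\gamma,\dots,\gamma)\big)=\widehat{HF}(Y)$. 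Filtering $\mathbb{X}$ by the number of $\beta$-coordinates, the associated graded differential is $d_0$ (the sum of the internal differentials), so the associated spectral sequence has $E^1=\bigoplus_{I\in\{\alpha,\beta\}^n}\widehat{HF}(Y(I))$ with $d_1$ the sum of the maps $\widehat f_*$ over immediate-successor pairs, and it converges to $H_*(\mathbb{X})=\widehat{HF}(Y)$, as claimed.

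The genuine content sits entirely in the two geometric inputs quoted above: $D^2=0$, which rests on a count of degenerations of moduli spaces of holomorphic polygons, and the mapping-cone identification $\widehat{CF}(Y)\simeq\mathrm{Cone}(\widehat f)$ for a single knot; everything else is homological algebra of filtered complexes and iterated mapping cones, i.e. Lemma~4.4 of \cite{MR2141852}. So the proposal is simply to cite \cite{MR2141852} for these facts, noting that their construction used nothing about the ambient three-manifold beyond the data recorded above and hence applies verbatim here. The only real obstacle, were one to want a self-contained proof, would be re-deriving the holomorphic-polygon analysis underlying $D^2=0$ and Theorem~4.7.
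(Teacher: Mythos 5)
Your proposal matches the paper's treatment: the paper likewise quotes $D^2=0$ (Proposition 4.6 of \cite{MR2141852}) and the mapping-cone identification (Theorem 4.7), and then derives the theorem by induction on $n$ using Lemma 4.4 of \cite{MR2141852}, filtering the resulting iterated cone to obtain the stated $E^1$ page and $d_1$. Your outline is the same argument, just spelled out in slightly more detail.
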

  
 Now we discuss the relation of this with the reduced Khovanov homology of links in $S^3$. Check Section 5 and 6 in \cite{MR2141852} for details. Consider the link projection $L\in \mathbb{R}^2$ of a link $K$ in $S^3$ with $n$ crossings. Let $Y= \Sigma(S^3,K)$ be the branched double cover of $S^3$ over K. For the $i$th crossing of $L$, pick the vertical arc in $S^3$ connecting the two double points, and let $l_i$ be the preimage of this arc in $Y$. Each $l$ is framed such that the $1$-surgery of $Y$ on $l$ gives the branched double cover $\Sigma(S^3,K_0)$, where $K_0$ is obtained from $K$ by changing the crossing with the $0$-smoothing. For each state $I\in \{\alpha,\beta\}^n$, we consider the corresponding state $s=\phi(I) \in \{1,0\}^n$, where $\phi$ sends $\alpha$ to $1$ and $\beta$ to $0$. Then the manifold $Y(I)$ obtained from $Y$ be surgery according to the multiframing $I$ is the same as the branched double cover $\Sigma(S^3,L_s)$ for the state $s = \phi(I)$, by the local analysis for small balls $B^3$ in $S^3$ containing the crossings as in Lemma $\ref{lem2.1.5}$. 
 
 Each $L_s$ is an unlink with $k_s$ components, so the branched double cover $\Sigma(S^3,L_s) =Y(I) $ is a connected sum of $k_s-1$ copies of $S^1\times S^2$, and $\widehat{HF}(Y(I)) = V^{\otimes (k_s-1)} $, where $V = \langle v_+,v_- \rangle$ is the same as before. Hence we have an identification between $\widehat{HF}(Y(I))$ and $\widetilde{CKh}_{\phi(I)}(m(L))$, where $m(L)$ is the mirror of $L$. The mirror $m$ appears as the map $\phi$ sends $\alpha $ to $1$ and $\beta $ to $0$, so we turn all the crossings upside down to make it consistent with the usual convention for the Khovanov homology, which gives the Khovanov homology for $m(L)$ instead of $L$. This identification is natural in the sense that it turns the map $d_1$ in $\bigoplus _{I \in\{\alpha,\beta\}^n}\widehat{HF}(Y(I))$ to the differential $d$ in $\widetilde{CKh}(m(L))$. 
  Together with Theorem $\ref{th2.2.3}$,  we obtain the following result:
 
 \begin{theorem}
 	(Theorem 1.1 in \cite{MR2141852}) Let $L \subset S^3$  be a link. There is a spectral sequence whose $E^2$ terms consists of the reduced Khovanov homology of the mirror of $L$ with coefficients in $\mathbb{F}_2$, which converges to $\widehat{HF}(\Sigma(S^3,L),\mathbb{F}_2)$.
 	\label{theorem2.2.4}
 \end{theorem}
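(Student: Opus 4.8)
The plan is to specialize Theorem~\ref{th2.2.3} to the framed link $l = l_1 \cup \cdots \cup l_n \subset Y := \Sigma(S^3,K)$ built from the crossings of $L$ as described above, and then to identify the resulting $E^1$ page, together with its $d_1$ differential, with the reduced Khovanov chain complex of the mirror $m(L)$. Granting Theorem~\ref{th2.2.3}, the spectral sequence has $E^1 = \bigoplus_{I\in\{\alpha,\beta\}^n}\widehat{HF}(Y(I))$, converges to $\widehat{HF}(Y) = \widehat{HF}(\Sigma(S^3,L))$, and has $d_1$ equal to the sum of the two-handle cobordism maps $\widehat{f}_{*}\colon \widehat{HF}(Y(I))\to\widehat{HF}(Y(I'))$ over all pairs $(I,I')$ where $I'$ is an immediate successor of $I$. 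So the whole task reduces to identifying the pair $(E^1,d_1)$ combinatorially and then reading off $E^2 = H_*(E^1,d_1)$.

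First I would identify the groups. For a multiframing $I\in\{\alpha,\beta\}^n$, the surgered manifold $Y(I)$ equals the branched double cover $\Sigma(S^3,L_s)$ for the state $s=\phi(I)$ (with $\phi(\alpha)=1$, $\phi(\beta)=0$), by the local model of Lemma~\ref{lem2.1.5}: each crossing sits inside a ball $B^3$ whose branched double cover over the $0$-, $1$-, or singular tangle is a solid torus, and the surgery data match up. Since $L_s$ is an unlink of $k_s$ circles in $S^3$, we have $\Sigma(S^3,L_s)\cong \#^{k_s-1}(S^1\times S^2)$, and in the torsion $\mathrm{Spin}^c$ structure $\widehat{HF}(S^1\times S^2)\cong V$, so by the connected-sum formula over $\mathbb{F}_2$ we get $\widehat{HF}(Y(I))\cong V^{\otimes(k_s-1)}$, which is exactly $\widetilde{CKh}_{\phi(I)}(m(L))$ once one circle (the one carrying the basepoint) is labeled by $\mathbb{F}_2$. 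The mirror enters precisely because $\phi$ reverses the $0/1$ convention, so every crossing is flipped relative to the standard Khovanov setup.

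The substantive step is to match $d_1$ with the Khovanov differential $d$. An immediate successor $(I,I')$ changes one crossing from its $\beta$- to its $\alpha$-resolution, and on the level of link diagrams $L_s\to L_{s'}$ this is either a merge of two circles or a split of one circle. Correspondingly the two-handle cobordism decomposes as a connected sum of a trivial cobordism with one of the two standard cobordisms between connected sums of copies of $S^1\times S^2$, and one computes directly that $\widehat{f}_{*}$ acts as the multiplication $m\colon V\otimes V\to V$ in the merge case and as the comultiplication $\Delta\colon V\to V\otimes V$ in the split case, and as the identity on all uninvolved tensor factors. Keeping track of the basepoint throughout shows the resulting complex is the reduced complex $\widetilde{CKh}_{\bullet}(m(L))$, and that the identification is natural: the square relating $d_1$ and $d$ commutes. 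This is the local holomorphic-polygon computation carried out in Sections~5 and~6 of \cite{MR2141852}, and it is where the real work lies; the main obstacle is controlling these cobordism maps on $\widehat{HF}$ precisely enough (including the basepoint bookkeeping for the reduced theory) to obtain an identification on the nose, not merely up to filtered quasi-isomorphism.

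Finally, combining these identifications, the pair $(E^1,d_1)$ is canonically isomorphic to $(\widetilde{CKh}_{\bullet}(m(L)),d)$, hence $E^2 = H_*(E^1,d_1) \cong \widetilde{Kh}_{\bullet}(m(L))$, the reduced Khovanov homology of the mirror of $L$ with $\mathbb{F}_2$ coefficients, and the spectral sequence converges to $\widehat{HF}(\Sigma(S^3,L),\mathbb{F}_2)$ by Theorem~\ref{th2.2.3}. This recovers Theorem~1.1 of \cite{MR2141852}.
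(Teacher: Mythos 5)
Your proposal is correct and follows essentially the same route as the paper: apply the link surgery spectral sequence (Theorem \ref{th2.2.3}) to the preimages of the crossing arcs in $\Sigma(S^3,L)$, identify $Y(I)$ with $\Sigma(S^3,L_{\phi(I)})$ and hence $E^1$ with the reduced Khovanov chain groups of $m(L)$, and match $d_1$ with the merge/split maps $m$ and $\Delta$ via the two-handle cobordisms, deferring the holomorphic-polygon computation to Sections 5 and 6 of \cite{MR2141852} exactly as the paper does. You also correctly locate the mirror in the reversal of the $0/1$ convention under $\phi$.
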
 
  Our task in the next section is to generalize this result for null homologous links in $\rp{3}$, where the reduced Khovanov homology is replaced by the Khovanov-type homology $\widetilde{Kh}^{\alpha_{\textit{HF}}}(L)$ from Definition $\ref{def1.1.3}$, such that $\alpha_{\textit{HF}}$ is as in $(\ref{ahf})$ in Section 1.6.
\subsection{The differential $d_1$ in the spectral sequence for a branched double cover of $\rp{3}$}
$\label{sec2.3}$
By the discussion in Lemma $\ref{lem2.1.3}$, we have an identification of $\widehat{HF}(Y(I))$ with $\widetilde{CKh}^{a_{HF}}_{\phi(I)}(L)$ as vector spaces. We are left to compute $d_1$ in the spectral sequence to show it coincides with the corresponding edge maps in $\widetilde{CKh}^{\alpha_{\textit{HF}}}$. For $1\rightarrow2$ and $2\rightarrow1$ bifurcations, the same proof in \cite{MR2141852} for links in $S^3$ works, which we will review briefly. The case of $1\rightarrow1$ bifurcations needs a detailed description of the $1$-surgery involved.  

As discussed in Section 3.1, there are two branched double covers $\Sigma(\rp{3},K)$ for a null homologous link. Let's fix which branched cover we will use first. For the link projection $L$ of a null homologous link $K$ in $\rp{3}$, we will consider the even branched double cover $\Sigma(\rp{3},L,C_P)$ by picking $P\in R_0$, see Definition $\ref{def2.1.3}$. For each smoothing $L_s$, we will consider the branched double cover $\Sigma(\rp{3},L_s,C_P)$ obtained from $\Sigma(\rp{3},L,C_P)$ by doing $1$-surgery, as described in Lemma $\ref{lem2.1.5}$. Note that these branched double covers don't depend on the specific choice of $P$ as long as $P\in R_0$. By Lemma $\ref{lem2.1.3}$, we have
\begin{equation*}
	\Sigma(\rp{3},L_s,C_P) =
	\begin{cases}
		(\rp{3}\#\rp{3})\#(S^1\times S^2)^{\#(k_s-1)} &\text{ if } e_s(P)=0,\\
		(S^1\times S^2)^{\#k_s} & \text{ if } e_s(P)=1,
	\end{cases}       
\end{equation*}  
where $k_s$ is the number of circles in the smoothing $L_s$. Hence, the corresponding Heegaard Floer homology is  \begin{equation*}
	\widehat{HF}(\Sigma(\rp{3},L_s,C_P) )=
	\begin{cases}
		W\otimes V^{\otimes (k_s-1)} &\text{ if } e_s(P)=0,\\
		\overline{V} \otimes V^{\otimes (k_s-1)} & \text{ if } e_s(P)=1,
	\end{cases}       
\end{equation*} 
where $W = \widehat{HF}(\rp{3}\#\rp{3}) = \langle a,b,c,d \rangle$, $V = \widehat{HF}(S^1\times S^2) = \langle \un,\coun\rangle$ and  $\overline{V} = \widehat{HF}(S^1\times S^2) = \langle \overline{v}_+,\overline{v}_-\rangle$. See for example Chapter $4$ and $7$ in \cite{MR1957829} for the computation of $\widehat{HF}(\rp{3}\#\rp{3})$ and its absolute grading.  Here, $\overline{V}$ and $V$ are the same vector space. The reason we want to distinguish them will become clear in the proof of next proposition. Note that the quantum gradings of the generators in Heegaard Floer homology are the twice the absolute gradings of them. Also note that $\widehat{HF}(\Sigma(\rp{3},L_s,C_P) )$ is exactly the vector space we associate to the state $s$ in our chain complex $\widetilde{CKh}^{a_{HF}}_{s}(L)$.

\begin{proposition}
	\label{prop 2.3.1}
	(Proposition 6.2 in \cite{MR2141852}) For the link projection $L$ of a null homologous link in $\rp{3}$ and a point $P\in R_0$, there is an isomorphism \[\Psi_s:\widetilde{CKh}^{a_{\textit{HF}}}_{s}(L) \longmapsto \widehat{HF}(\Sigma(\rp{3},L_s,C_P) ) \] for each state $s$, such that the following diagram commutes if $s\rightarrow s'$ is a $1\rightarrow2$ bifurcation or $2\rightarrow1$ bifurcation:
	\begin{center}
	\begin{tikzcd}
		\widetilde{\mathit{CKh}}^{\alpha_{\textit{HF}}}_{s}(L)  \arrow[r,"d_{CKh}"] \arrow{d}{\Psi_s}
		& \widetilde{\mathit{CKh}}^{\alpha_{\textit{HF}}}_{s'}(L) \arrow[d,"\Psi_{s'}"] \\
		\widehat{HF}(\Sigma(\rp{3},L_s,C_P) ) \arrow{r}{d_{HF}}
		& \widehat{HF}(\Sigma(\rp{3},L_{s'},C_P) )
	\end{tikzcd} 

	\end{center}
Here $d_{CKh}$ is the differential defined for the chain complex $\widetilde{\mathit{CKh}}^{\alpha_{\textit{HF}}}_{\bullet}(L) $ as in Section $\ref{sec1.2}$, and $d_{HF}$ is the map induced on $\widehat{HF}$ by $1$-surgery described in Lemma $\ref{lem2.1.5}$.	
	 
\end{proposition}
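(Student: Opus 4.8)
The plan is to follow closely the proof of Proposition 6.2 in \cite{MR2141852}, isolating the two features special to $\rp{3}$: the marked circle may carry the four-dimensional space $W = \widehat{HF}(\rp{3}\#\rp{3})$ rather than a copy of $V$, and bifurcations may now absorb an $S^1\times S^2$-factor into this marked factor. First I would fix, for each state $s$, the connected-sum decomposition of $\Sigma(\rp{3},L_s,C_P)$ produced in the proof of Lemma \ref{lem2.1.3}, ordering the circles of $L_s$ so that the marked circle (the one carrying $M$) is attached first. In that decomposition the marked circle contributes the whole summand $\rp{3}\#\rp{3}$ when $e_s(P)=0$, or a single $S^1\times S^2$ when $e_s(P)=1$, while each of the remaining $k_s-1$ circles contributes an $S^1\times S^2$. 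Applying the K\"unneth formula for connected sums to $\widehat{HF}$, together with the basepoint-dependent identification of $\widehat{HF}(\#(S^1\times S^2))$ with tensor powers of $V$ from Sections 5--6 of \cite{MR2141852} (the identification whose naturality is recorded by the map $\eta_s$ of Proposition \ref{prop1.3.5}), I would define $\Psi_s$ to send the marked factor to $W$ or to $\overline{V}$ and each remaining factor to $V$. That $\Psi_s$ is an isomorphism is immediate from K\"unneth; that it respects the gradings follows from the standard computations of $\widehat{HF}(\rp{3}\#\rp{3})$ and of $\widehat{HF}(S^1\times S^2)$ with their absolute gradings (see \cite{MR1957829}), keeping in mind that the quantum grading is twice the Maslov grading and the degree shifts built into \eqref{eq:1}.

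Next I would treat the $2\rightarrow1$ and $1\rightarrow2$ bifurcations using the local analysis of Lemma \ref{lem2.1.5}: the surgery cobordism relating $\Sigma(\rp{3},L_s,C_P)$ and $\Sigma(\rp{3},L_{s'},C_P)$ is supported in the preimage of a ball about the crossing, hence is a boundary connected sum of a product cobordism on the complement with a standard local cobordism, and the induced map is the connected sum of an identity with a standard ``merge'' or ``split'' map. When the marked circle is not involved, this is verbatim the computation in \cite{MR2141852}: on the two (resp.\ one) relevant $S^1\times S^2$-factors the map is the multiplication $m\colon V\otimes V\to V$ (resp.\ the comultiplication $\Delta\colon V\to V\otimes V$), and the marked factor is untouched, so the square commutes.

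When the marked circle is involved, I would invoke the connected-sum formula for cobordism maps to reduce to the behaviour of the elementary cobordisms $S^1\times S^2\leftrightarrow S^3$, tensored (when $e_s(P)=0$) with the identity of $\widehat{HF}(\rp{3}\#\rp{3})$: the merge of the marked circle with another circle deletes one $S^1\times S^2$-factor and induces $\mathrm{id}\otimes\epsilon$ on the remaining marked factor, and the split of the marked circle creates an $S^1\times S^2$-factor and induces $\mathrm{id}\otimes\delta$, where $\epsilon$ and $\delta$ are the cobordism maps for capping off and creating an $S^1\times S^2$. A careful matching of the grading conventions --- this is precisely where the notational distinction between $\overline{V}$ and $V$ is used, and, ultimately, why it is the Khovanov-type homology of the mirror $m(L)$ that appears in the final theorem --- identifies $\mathrm{id}\otimes\epsilon$ with the trivial module multiplication $m\colon V_i\otimes V\to V_i$ and $\mathrm{id}\otimes\delta$ with the trivial comodule comultiplication $\Delta\colon V_i\to V_i\otimes V$ of Section \ref{sec1.2}, again making the square commute.

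The main obstacle is this last step: pinning down the elementary cobordism maps $\epsilon$ and $\delta$, and, when $e_s(P)=0$, the cobordism maps into and out of $\widehat{HF}(\rp{3}\#\rp{3})$, with enough control on the absolute gradings and on the basepoint-dependent identification $\Psi_s$ to see that one obtains the specific trivial module and comodule structures prescribed in Section \ref{sec1.2} and not their duals. This uses the explicit description of $\widehat{HF}(\rp{3}\#\rp{3})$ with its grading from \cite{MR1957829} and the compatibility of $\Psi_s$ with the basepoint conventions of \cite{MR2141852}; everything else is a routine adaptation of \cite[\S6]{MR2141852}.
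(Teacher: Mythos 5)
Your proposal is correct and is, like the paper's argument, an adaptation of Section 6 of \cite{MR2141852}; the difference lies in how the commutativity of the square is actually verified. The paper identifies both $\widetilde{\mathit{CKh}}^{\alpha_{\textit{HF}}}_{s}(L)$ and $\widehat{HF}(\Sigma(\rp{3},L_s,C_P))$ as free rank-one modules over the algebra $V^{\otimes (k_s-1)}\cong \wedge^*H_1((S^1\times S^2)^{\#(k_s-1)},\mathbb{Z}_2)$ generated by $V_{e_s(P)}$ ($=W$ or $\overline{V}$), fixing the algebra isomorphism $\xi_s$ by sending the class $[\gamma_i]$ of the double cover of an arc from the marked circle to circle $i$ to the generator $S_i$; in that formalism a merge is ``quotient by $S_i=S_j$'' and a split is ``wedge with $S_i+S_{k_s}$'' on both sides (with the convention $S_0=0$ for the marked circle), so the marked and unmarked cases are dispatched uniformly and commutativity is immediate from Proposition 6.1 of \cite{MR2141852}. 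You instead decompose via K\"unneth for connected sums and compute elementary merge/split cobordism maps $\epsilon,\delta$ separately when the marked circle is involved; this does work and yields the same trivial (co)module structures, but the normalization of $\epsilon$ and $\delta$ that you correctly flag as the main remaining obstacle is precisely what the paper's choice of basis $[\gamma_i]\mapsto S_i$, anchored at the marked circle, makes automatic (and what makes the change-of-marked-point map $\eta_s$ of Proposition \ref{prop1.3.5} natural). Two small corrections to asides in your write-up: the distinction between $\overline{V}$ and $V$ is there to separate the generator module $V_1=\widehat{HF}(S^1\times S^2)$ from the copies of $V$ forming the exterior algebra, and the mirror $m(L)$ enters through the correspondence $\alpha\mapsto 1$, $\beta\mapsto 0$ between surgery labels and smoothings, not through the grading of the elementary cobordism maps.
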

\begin{proof}
	As in Section 5 in \cite{MR2141852}, we have described an algebra structure on $V^{\otimes (k_s-1)}$ as the quotient of the polynomial algebra over $\mathbb{F}_2$ generated by $S_i$, divided out by the relations $S_i^2=0$ for $i=1,...,k_s-1$, where $S_i=\un\otimes \un\otimes...\otimes \coun \otimes \un\otimes...\otimes \un$ with $\coun$ at the $i$th component in the proof of Proposition \ref{prop1.3.5}. So $\widetilde{\mathit{CKh}}^{\alpha_{\textit{HF}}}_{s}(L)$ is a free module of this algebra generated by elements in $V_i$, where $V_i = W$ if $e_s(P)=0$ and $V_i = \overline{V}$ if $e_s(P)=1$. On the other hand, $ \widehat{HF}(\Sigma(\rp{3},L_s,C_P) )$ is a also free module over the algebra \[ \wedge^*H_1((S^1\times S^2) ^{\#(k_s-1)},\mathbb{Z}_2) \cong V^{\otimes (k_s-1)},\] generated by elements in $V_i$, where again $V_i = W = \widehat{HF}(\rp{3}\#\rp{3})$ if $e_s(P)=0$, and $V_i=\overline{V} = \widehat{HF}(S^1\times S^2)$ if $e_s(P)=1$. Furthermore, we can identify generators of $H_1((S^1\times S^2) ^{\#(k_s-1)},\mathbb{Z}_2)$ with circles in $L_s$ as follows. Recall we have picked a point $M$ on the link projection $L$ when defining the differential. Label the circles in the smoothing $L_s$ from $0$ to $k_s-1$, such that the circle with $M$ is labeled $0$. Let $\gamma_i$ be the branched double cover of an arc from the circle $0$ to the circle $i$ in $\rp{2}$ avoiding other circles. Then $\{[\gamma_i]\}_{i=1}^{k_s-1}$ is a basis of $H_1((S^1\times S^2) ^{\#(k_s-1)},\mathbb{Z}_2)$. We define \[\xi_s:\wedge^*H_1((S^1\times S^2) ^{\#(k_s-1)},\mathbb{Z}_2) \longmapsto V^{\otimes (k_s-1)}\]  as the algebra automorphism, such that $\xi_s([\gamma_i]) = S_i$, and $\Psi_s$ the corresponding module isomorphism, which is identity on $V_i$. 
	
	Now we show commutativity of the diagram for $1\rightarrow2$ bifurcations and $2\rightarrow1$ bifurcations. Note that $d_{CKh}$ acts trivially on the $V_i$ component in this case. On the  $V^{\otimes (k_s-1)}$ component, we have describes the behavior $d_{CKh}$ in this formalism in the proof of Proposition $\ref{prop1.3.5}$ as follows, which is from Section 5 of \cite{MR2141852}: \begin{equation*}
		d_{CKh} =
		\begin{cases}
			\text{quotienting out } S_i=S_j &\text{ if } s\rightarrow s' \text{ merges circle }i \text{ with circle }j,\\
			\text{wedging with } S_i+S_{k_s}  & \text{ if } s\rightarrow s' \text{ splits circle }i,
		\end{cases}       
	\end{equation*} 
where $k_s$ is the label of the newly created circle in the splitting case, and we denote $0=S_0$ for the convenience of the notation. 
On the Heegaard Floer homology side, we have exactly the same map via identifying $\widehat{HF}(\Sigma(\rp{3},L_s,C_P))$ as  the free module over $V^{\otimes (k_s-1)}$ under the isomorphism $\xi_s$. See Proposition 6.1 in \cite{MR2141852} for a more detailed discussion of $d_{HF}$. Hence the square commutes.
\end{proof}

\begin{remark}
	With the map $\xi_s$, it is quite natural to write down the change of variable map in Proposition $\ref{prop1.3.5}$ for changing the position of the marked point.
\end{remark}
Now we study the $1\rightarrow1$ bifurcation. We will draw specific Kirby diagrams for the $1$-surgery, and compute the induced map on $\widehat{HF}$ using a proposition in \cite{MR1957829}.

If $s\rightarrow s'$  if a $1\rightarrow1$ bifurcation, then $e_{s'}(P) = e_s(p) +1$ mod $2$, and $k_s = k_{s'}$, so we switch from one branched double cover of $\rp{3}$ over an unlink of $k_s$ components to the other, while no change happens to the common $(S^1\times S^2)^{\#(k_s-1)}$ component, nor the identification of generators of $H_1((S^1\times S^2) ^{\#(k_s-1)},\mathbb{Z}_2)$ with the circles in the smoothing. So it is enough to consider the case when $k_s=1$.

\begin{lemma}
	The Kirby diagrams of the $1$-surgery corresponding to the $1\rightarrow1$ bifurcation are shown in Figure $\ref{fig:kirby}$, where $(a)$ is the case when $e_s(P)=0$ and $(b)$ is the case when $e_s(P)=1$, and the red curve is the one we are doing surgery on. 
\end{lemma}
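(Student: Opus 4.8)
The plan is to make everything explicit near the changed crossing and then push the resulting picture through the branched double cover construction of Section \ref{sec2.1}. As already observed, a $1\rightarrow1$ bifurcation changes neither the number of circles nor the $(S^1\times S^2)^{\#(k_s-1)}$ summand (nor the chosen identification of a basis of its first homology with the circles of the smoothing), so it suffices to treat the case $k_s=k_{s'}=1$: a one-crossing diagram in $\rp{2}$ whose two resolutions $c_0,c_1$ are each a single null homologous circle, with the disk side and the M\"{o}bius-band side of $\rp{2}$ interchanged. First I would fix a standard local model: a coordinate ball $B^3\subset\rp{3}$ meeting the link in the usual crossing tangle (four points on $\partial B^3$, two resolving arcs inside) together with the vertical arc $\tau\subset B^3$ joining the two double points. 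By Lemma \ref{lem2.1.5}, the surgery curve $l$ is the preimage of $\tau$ (pushed to $\partial B^3$) in the branched double cover, equipped with the framing prescribed there, so the whole task reduces to drawing $\Sigma(\rp{3},c_0,C_P)$ (resp.\ $\Sigma(\rp{3},c_1,C_P)$) together with $l$.

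To draw it, I would use the genus-one Heegaard splitting $\rp{3}=U_1\cup_f U_2$ from Section \ref{sec2.1} and isotope the circle, the ball $B^3$, and the arc $\tau$ into $U_2=A\times I$, viewed as the trivial $I$-bundle over an annulus $A$ exactly as in the proof of the computation of $H_1(\rp{3}\setminus K,\mathbb{Z})$. Since the circle is null homologous in $\rp{2}$ it bounds a disk there, so in this picture the branching locus is unknotted relative to the annulus. Taking the double cover of $U_2$ minus the circle determined by the relevant $h$, filling in the branch locus, and assembling the pieces as in the proof of Lemma \ref{lem2.1.3}, one recovers $\rp{3}\#\rp{3}$ when $e_s(P)=0$ and $S^1\times S^2$ when $e_s(P)=1$; reading a handle presentation off this construction while carrying along the lift $l$ of $\tau$ produces the two Kirby diagrams of Figure \ref{fig:kirby}. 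As a built-in consistency check, performing the $1$-surgery on $l$ must convert diagram $(a)$ into $S^1\times S^2$ and diagram $(b)$ into $\rp{3}\#\rp{3}$, because the bifurcation yields $e_{s'}(P)=e_s(P)+1$ with $k_{s'}=1$; verifying this by Kirby moves confirms the figure.

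I expect the main obstacle to lie not in identifying the ambient $3$-manifolds --- that is precisely Lemma \ref{lem2.1.3} --- but in pinning down the exact isotopy class and framing of $l$ inside them. The feature that makes the bifurcation a $1\rightarrow1$ move rather than a $2\rightarrow1$ or $1\rightarrow2$ move is the ``non-planar'' routing of the two arcs of the circle outside $B^3$, and the delicate step will be to track how this routing together with the arc $\tau$ lifts to the double cover, in particular to compute the geometric linking of $l$ with the $\pm 2$-framed unknot(s) presenting the $\rp{3}$ summand(s) and to check the surgery framing. Once this explicit diagram is fixed it is exactly the input needed for the ensuing computation of the map induced on $\widehat{HF}$ via the proposition of \cite{MR1957829}.
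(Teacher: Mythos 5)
Your plan is correct and follows essentially the same route as the paper: reduce to $k_s=1$, identify the surgery curve as the lift of the vertical arc $\tau$ in an explicit Kirby presentation of $\Sigma(\rp{3},L_s,C_P)$ built from the genus-one Heegaard splitting, and then pin down the framing indirectly. The only (minor) divergence is in case $(b)$: you propose to fix the framing by the Kirby-calculus consistency check that the surgery must return $\rp{3}\#\rp{3}$ (which does determine it, via a slam-dunk/continued-fraction computation, just as the paper's lens-space computation $L(4n-4,2n-1)$ handles case $(a)$), whereas the paper instead reads the framing off by attaching the two $2$-handles of the composite cobordism $\rp{3}\#\rp{3}\to\rp{3}\#\rp{3}$ in the opposite order, where the red curve visibly has framing $0$.
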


\begin{figure}[t]
	\[
	{
		\fontsize{8pt}{10pt}\selectfont
		\def\svgwidth{4.5in}
		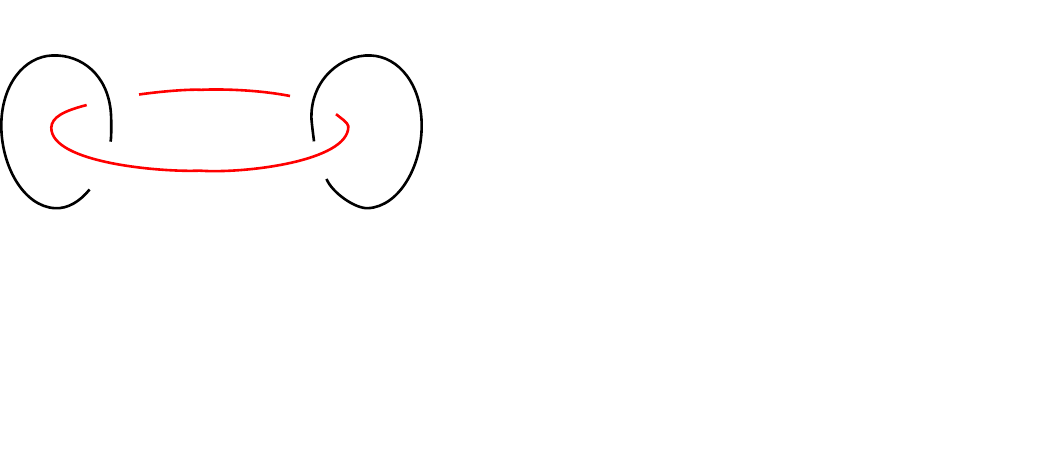
	}
	\]
	\caption{Kirby diagrams of  $1$-surgeries associated to $1\rightarrow1$ bifurcations}
	\label{fig:kirby}
\end{figure}

\begin{figure}[t]
	\[
	{
		\fontsize{8pt}{10pt}\selectfont
		\def\svgwidth{4.5in}
		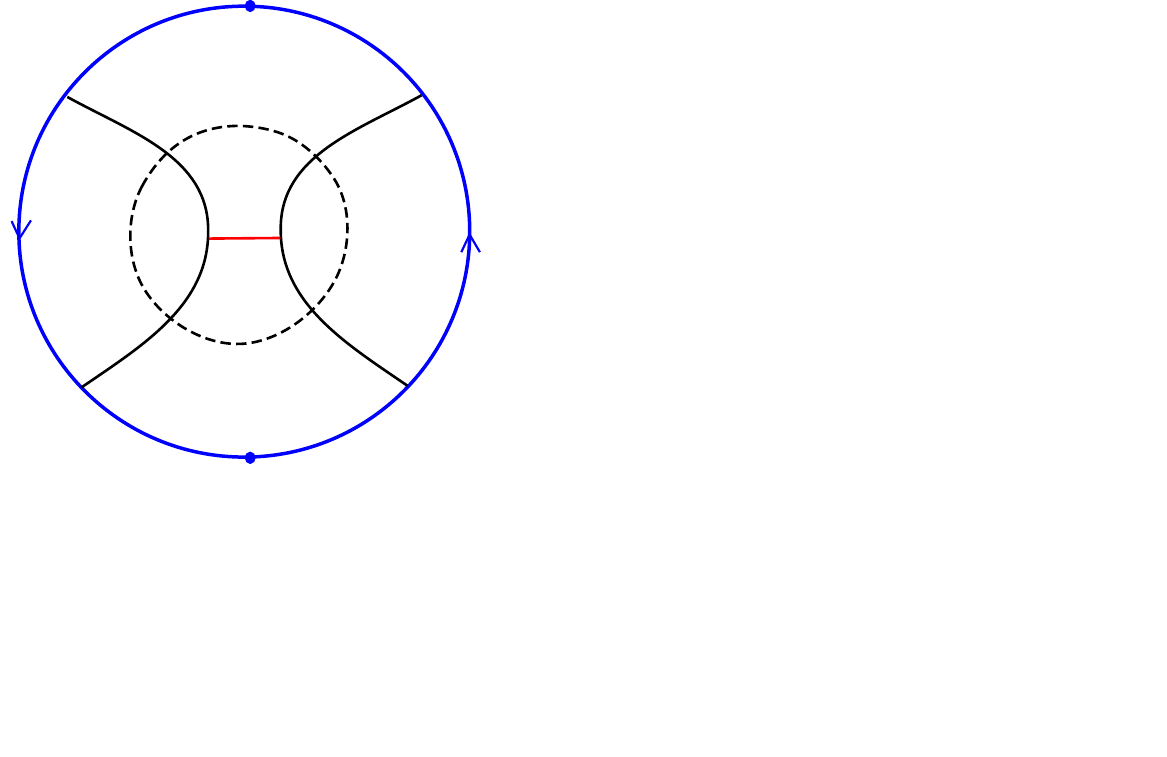
	}
	\]
	\caption{Explanation of Kirby diagram $(a)$}
	\label{fig:cobordism-a}
\end{figure}

\begin{figure}[t]
	\[
	{
		\fontsize{8pt}{10pt}\selectfont
		\def\svgwidth{4.5in}
		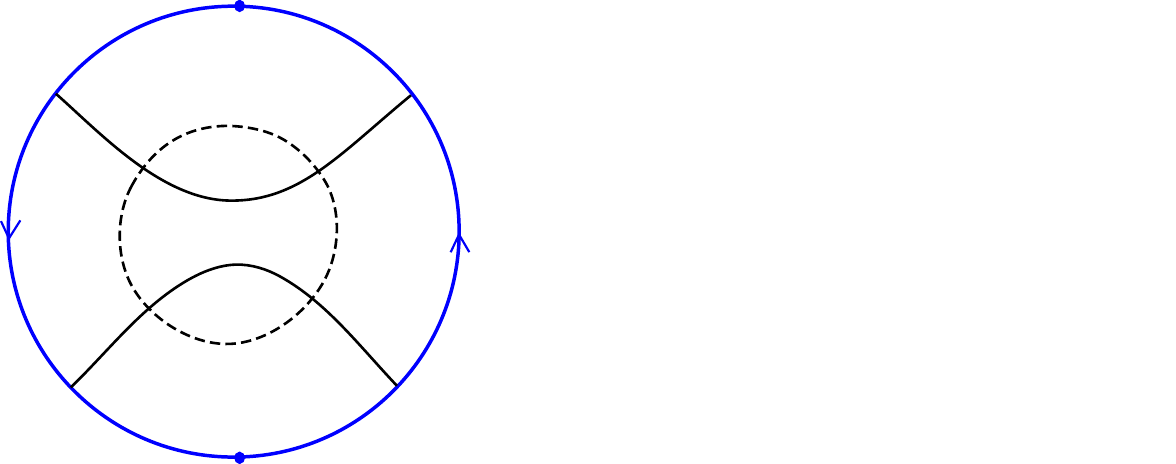
	}
	\]
	\caption{Explanation of Kirby diagram (b)}
	\label{fig:cobordism-b}
\end{figure}

\begin{proof}
	Suppose $e_s(P)=0$, then $e_{s'}(P)=1$, and the $1$-surgery changes the branched double cover from $\rp{3} \# \rp{3}$ to $S^1 \times S^2$. The link we are doing surgery along is the branched double cover of the red arc in Figure $\ref{fig:cobordism-a}$, which could be seen as the red circle in the Kirby diagram of $\rp{3} \# \rp{3}$. To determine the framing, we can look at the  $3$-manifold when it is with framing $-n$. Such obtained $3$-manifold is a lens space $L(p,q)$ with $p = 4n-4$, $q=2n-1$, where $p/q$ equals to the continued fraction $[2,n,2]$. We want to get $S^1\times S^2$, so $p=0$ and $n=1$. 
		
	Now if $e_s(P)=1$, then $e_{s'}(P)=0$ and the $1$-surgery changes the branched double cover from $S^1\times S^2$ to $\rp{3}\#\rp{3}$. Note that the red arc in Figure $\ref{fig:cobordism-b}$ is isotopic to a pushoff of the arc $ab$ to the boundary of $B^3$, so the branched double cover of the red arc is isotopic to a meridian of the green curve, which is the knot we are doing surgery along to get $S^1\times S^2$ from $\rp{3}\#\rp{3}$. To determine the framing, let's look at the following cobordism in Figure \ref{fig:framing-of-cobordism-b}. The compositions of the top and bottom two arrows both represent the cobordism from $\rp{3}\#\rp{3}$ to itself by doing surgeries along the branched double cover of the red and green arcs. The difference is that we add the two $2$-handles in different order. It is clear that in the bottom left cobordism from $\rp{3}\#\rp{3}$ to $\rp{3}\#\rp{3}\#(S^1\times S^2)$, the framing of the red circle is $0$. Therefore, the framing of the red circle is $0$ in the top right cobordism as well, which is the cobordism we are describing in Figure \ref{fig:cobordism-b}.
\end{proof}

\begin{figure}[t]
	\[
	{
		\fontsize{8pt}{10pt}\selectfont
		\def\svgwidth{4.5in}
		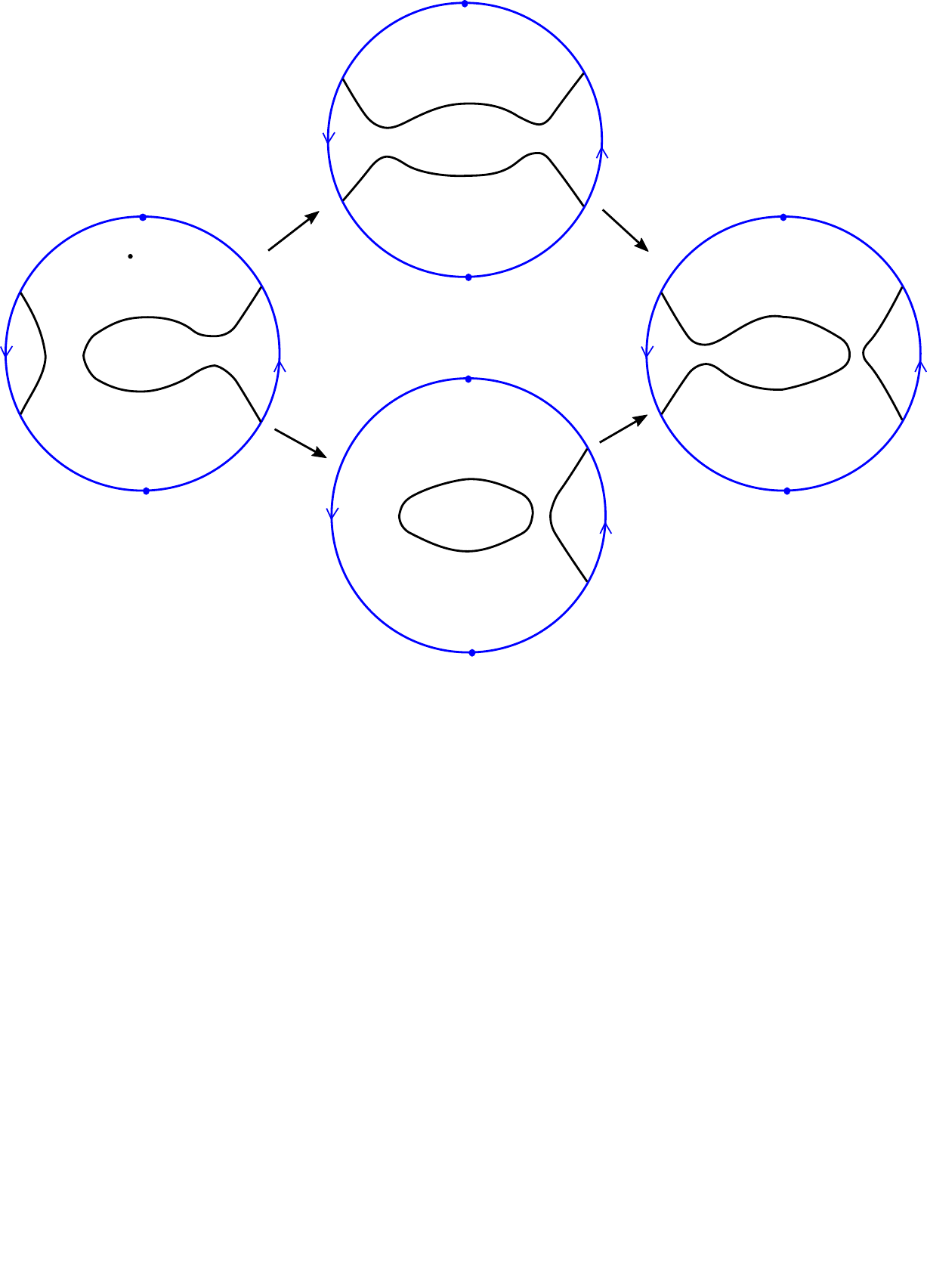
	}
	\]
	\caption{Two different decomposition of the cobordism from $\rp{3}\#\rp{3}$ to itself}
	\label{fig:framing-of-cobordism-b}
\end{figure}

Now to compute the induced map on $\widehat{HF}$ by the $1$-surgery, we quote the following proposition from \cite{MR1957829}.

\begin{proposition}
	\label{prop2.3.3}
	(Proposition 9.3 in \cite{MR1957829}) Let $Y$ be a closed oriented $3$-manifold, and $K \subset Y$ a framed knot, such that the cobordism $Z(K)$ of the $1$-surgery along $K$ has $b_2^{-}(Z(K)) =0 = b_2^{+}(Z(K))$. Let $\mathfrak{s}$ be a $\text{Spin}^c$ structure on $Z(K)$ whose restriction  $\mathfrak{t}$ and $\mathfrak{p}$ to the boundary components $Y$ and $Y(K)$ respectively are torsion. 
	
	\begin{enumerate}
		\item If $K$ represents a non-torsion class in $H_1(Y)$, and if $\widehat{HF}(Y,\mathfrak{t})$ is standard, then the induced map\[F_{Z(K),\mathfrak{s}}:\widehat{HF}(Y,\mathfrak{t})\longmapsto\widehat{HF}(Y(K),\mathfrak{p})\] vanishes on the kernel of the action by $[K]$, inducing an isomorphism \[\widehat{HF}(Y,\mathfrak{t})/\text{Ker}[K]\cong \widehat{HF}(Y(K),\mathfrak{p}).\]
		\item
		If $K$ represents a torsion class in $H_1(Y)$ and $\widehat{HF}(Y(K),\mathfrak{p})$ is standard, then the map \[\widehat{HF}(Y,\mathfrak{t}) \longmapsto \widehat{HF}(Y(K),\mathfrak{p})\] induces an isomorphism \[ \widehat{HF}(Y,\mathfrak{t}) \cong \text{Ker}[L],\] where $[L]\in H_1(Y(K))$ is represented by the core of the glued-in solid torus. 
	\end{enumerate}
\end{proposition}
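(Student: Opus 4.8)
This proposition is not new: as stated, it is verbatim Proposition 9.3 of \cite{MR1957829}, so the plan is simply to cite that reference, and the ``proof'' in the paper consists of nothing more than this pointer. For the reader's orientation I sketch the idea behind it, which is all that will be needed, since in our application the proposition is only invoked when $Y$ and $Y(K)$ lie in $\{\rp{3}\#\rp{3},\,S^1\times S^2\}$ (and their connected sums with further $S^1\times S^2$ factors), where every group in sight is explicitly computable.

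The engine is the surgery exact triangle together with grading data for the associated cobordism maps. The framed knot $K\subset Y$, with the framing giving the $1$-surgery $Y(K)$, fits into a triad $(Y,Y(K),Y'(K))$, and hence into a long exact sequence
\[
\cdots \longrightarrow \widehat{HF}(Y) \xrightarrow{\ F_{Z(K)}\ } \widehat{HF}(Y(K)) \longrightarrow \widehat{HF}(Y'(K)) \longrightarrow \cdots
\]
refining over $\text{Spin}^c$ structures, in which $F_{Z(K)}$ is the map appearing in the statement. Two standard inputs then close the argument. First, the maps adjacent to $F_{Z(K)}$ in the sequence are controlled by the $H_1$-actions by the surgery curve $[K]$ and by the core $[L]$ of the reglued solid torus, so that their kernels and images are forced. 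Second, the hypothesis that the appropriate group is \emph{standard} --- isomorphic, as a graded $\wedge^*H_1$-module, to the Heegaard Floer homology of a connected sum of copies of $S^1\times S^2$ --- pins down $\widehat{HF}(Y'(K))$ in the relevant $\text{Spin}^c$ structures; combined with exactness this forces $F_{Z(K)}$ either to have kernel exactly $\mathrm{Ker}[K]$ and be surjective (when $[K]$ is non-torsion), or to be injective with image exactly $\mathrm{Ker}[L]$ (when $[K]$ is torsion). Keeping track of absolute gradings along $Z(K)$, using $b_2^\pm(Z(K))=0$, is what makes these identifications hold on the nose rather than merely up to abstract isomorphism.

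The only genuine content, and the main obstacle, is the $\text{Spin}^c$ and grading bookkeeping: one must verify that the $\text{Spin}^c$ structures on $Y$, $Y(K)$, and $Y'(K)$ line up along the cobordisms so that $F_{Z(K)}$ really occupies the asserted slot in the triangle, and that the ``standard'' hypothesis propagates as required. In our setting this is painless: $\rp{3}$ and $\rp{3}\#\rp{3}$ are $L$-spaces, $S^1\times S^2$ is Floer-standard, and the requisite absolute gradings are recorded in \cite{MR1957829}, so we may apply the proposition directly to each $1\rightarrow1$ bifurcation without re-deriving anything.
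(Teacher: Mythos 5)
Your overall strategy coincides with the paper's: this proposition is not proved but quoted from \cite{MR1957829}, and your sketch of the mechanism (surgery triangle, $H_1$-actions of $[K]$ and of the core $[L]$, grading control via $b_2^\pm(Z(K))=0$) is a reasonable orientation for the reader. However, there is one genuine gap in treating this as a \emph{verbatim} citation: Proposition 9.3 of \cite{MR1957829} is stated for $HF^{\infty}$, not for $\widehat{HF}$, whereas the statement you are asked to justify is about $\widehat{HF}$. A bare pointer to the reference therefore does not close the argument; you need a bridge between the two flavors.

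The paper supplies exactly this bridge, and it is where the ``standard'' hypothesis earns its keep beyond what you describe: when $\widehat{HF}(Y,\mathfrak{t})$ (respectively $\widehat{HF}(Y(K),\mathfrak{p})$) is standard, one has $HF^{\infty}(Y,\mathfrak{t}) \cong \widehat{HF}(Y,\mathfrak{t})\otimes\mathbb{Z}[U,U^{-1}]$, and the cobordism map $F_{Z(K),\mathfrak{s}}$ is $U$-equivariant, so the $HF^{\infty}$ statement of Ozsv\'ath--Szab\'o descends to the stated $\widehat{HF}$ statement. You should add this step (or an equivalent comparison via the long exact sequences relating $\widehat{HF}$, $HF^{+}$, and $HF^{\infty}$); without it, the proposition as written is not literally the cited result, and the subsequent computations of $f$ and $g$ in Proposition \ref{prop2.3.4}, which are carried out at the level of $\widehat{HF}$, would not be justified.
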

Here, $\widehat{HF}(Y,\mathfrak{t})$ is $\textbf{standard}$ means that $\widehat{HF}(Y,\mathfrak{t}) \cong (\Lambda^bH^1(Y,\mathbb{Z}))\otimes \mathbb{F}_2$, where $b = b_1(Y)$. The original proposition is stated for $HF^{\infty}$, but it holds for $\widehat{HF}$ as well. When $\widehat{HF}(Y,\mathfrak{t})$ is standard, we have $HF^{\infty}(Y,\mathfrak{t}) = \widehat{HF}(Y,\mathfrak{t}) \otimes \mathbb{Z}[U,U^{-1}]$, and the cobordism map is $U$-equivariant, so the statement for $HF^{\infty}$ implies the one for $\widehat{HF}$.

 Note that $\widehat{HF}(Y,\mathfrak{t})$ is standard for $Y = \rp{3}\#\rp{3}$ and $S^1\times S^2$ and all torsion $\text{Spin}^c$ structures of them. Now we compute the induced map on $\widehat{HF}$ of the surgery corresponding to $1\rightarrow1$ bifurcations. Recall we have used the notation  \[\widehat{HF}(\rp{3}\#\rp{3}) = W =\langle a,b,c,d \rangle,\] where $a,b,c,d$ are generators corresponding to different torsion $\text{Spin}^c$ structures on $\rp{3}\#\rp{3}$, with absolute gradings $1/2,0,0,-1/2$ respectively, and \[\widehat{HF}(S^1\times S^2) = \overline{V} =\langle \overline{v}_+,\overline{v}_- \rangle,\] where $\overline{v}_+$ and $\overline{v}_-$ are generators corresponding to the torsion $\text{Spin}^c$ structure $\mathfrak{s}_0$ on $S^1\times S^2$, with absolute gradings $1/2,-1/2$ respectively.

\begin{proposition}
	\label{prop2.3.4}
	For the cobordism $Z_a$ associated to  $(a)$ in Figure $\ref{fig:kirby}$, the induced map on $\widehat{HF}$ is
	\begin{align*}
		f=F_{Z_a} :\widehat{HF}(\rp{3}\#\rp{3})& \longmapsto \widehat{HF}(S^1\times S^2)\\
		f(b)=f(c) = \overline{v}_-&, \, \, f(a)=f(d) =0.
	\end{align*} 
   For the cobordism $Z_b$ associated to $(b)$ in Figure $\ref{fig:kirby}$, the induced map on $\widehat{HF}$ is 
   	\begin{align*}
   	g=F_{Z_b}:\widehat{HF}(S^1\times S^2)& \longmapsto \widehat{HF}(\rp{3}\times \rp{3})\\
   	g(\overline{v}_+) = b+c&, \, \,  g(\overline{v}_-) = 0.
   \end{align*} 
Here we are summing over all $\text{Spin}^c$ structures on the cobordism.
\end{proposition}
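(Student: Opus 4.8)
The plan is to reduce, as the paper already does, to the case $k_s=1$, and then apply Proposition \ref{prop2.3.3} to the two cobordisms $Z_a$ and $Z_b$, using the absolutely graded structure to pin the maps down. First I would record that both $Z_a$ and $Z_b$ are cobordisms obtained by attaching a single two-handle, namely along the red framed knot in the Kirby diagrams of Figure \ref{fig:kirby} (these diagrams and the Kirby moves of Figures \ref{fig:cobordism-a} and \ref{fig:cobordism-b} identify the cobordisms unambiguously). In particular $\chi(Z_a)=\chi(Z_b)=1$, and I would compute the relative intersection forms directly from the linking matrices of the two diagrams — each is a negative-definite chain together with the extra surgery component — to check that $b_2^{+}=b_2^{-}=0$ for both, which is the hypothesis of Proposition \ref{prop2.3.3}. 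I would also note that $\widehat{HF}$ is standard for both $\rp{3}\#\rp{3}$ and $S^1\times S^2$ in every torsion $\mathrm{Spin}^c$ structure, so both parts of that proposition are available.

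For $Z_a$, going from $\rp{3}\#\rp{3}$ to $S^1\times S^2$: since $H_1(\rp{3}\#\rp{3})$ is entirely torsion, the red surgery curve represents a torsion class, so part (2) of Proposition \ref{prop2.3.3} applies. Tracing the Kirby move that reduces the diagram to the standard $0$-framed unknot, the core $[L]$ of the glued-in solid torus is a generator of $H_1(S^1\times S^2)\cong\mathbb{Z}$, and this generator acts on $\widehat{HF}(S^1\times S^2)=\overline{V}=\langle\overline{v}_+,\overline{v}_-\rangle$ by $\overline{v}_+\mapsto\overline{v}_-$, $\overline{v}_-\mapsto 0$, so $\mathrm{Ker}[L]=\langle\overline{v}_-\rangle$. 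Proposition \ref{prop2.3.3}(2) then says that for each $\mathrm{Spin}^c$ structure $\mathfrak{s}$ on $Z_a$ restricting to torsion structures on both ends, $F_{Z_a,\mathfrak{s}}$ carries the one-dimensional group $\widehat{HF}(\rp{3}\#\rp{3},\mathfrak{t})$ isomorphically onto $\langle\overline{v}_-\rangle$, while all other $F_{Z_a,\mathfrak{s}}$ vanish. To see which of the generators $a,b,c,d$ occur as such a $\mathfrak{t}$, I would use the grading shift formula $\tfrac14\bigl(c_1(\mathfrak{s})^2-2\chi(Z_a)-3\sigma(Z_a)\bigr)$ together with the absolute gradings $1/2,0,0,-1/2$ of $a,b,c,d$ and the grading $-1/2$ of $\overline{v}_-$: the possible values of $c_1(\mathfrak{s})^2$ over the $\mathrm{Spin}^c$ structures on $Z_a$ restricting to $\mathfrak{s}_0$ on $S^1\times S^2$ should force the domain to be exactly the two grading-zero structures $b$ and $c$ (equivalently, computing $H^2(Z_a;\mathbb{Z})$ and its restriction maps, the structures extending $\mathfrak{s}_0$ restrict on $\rp{3}\#\rp{3}$ only to $b,c$). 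Summing over $\mathrm{Spin}^c$ structures gives $f(b)=f(c)=\overline{v}_-$ and $f(a)=f(d)=0$.

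For $Z_b$, going from $S^1\times S^2$ to $\rp{3}\#\rp{3}$: by the description in the previous lemma the red surgery curve is a meridian of the $0$-framed component, hence a generator — a non-torsion class — of $H_1(S^1\times S^2)$, so part (1) of Proposition \ref{prop2.3.3} applies. The action of this class on $\overline{V}$ has kernel $\langle\overline{v}_-\rangle$, so each $F_{Z_b,\mathfrak{s}}$ kills $\overline{v}_-$ and factors through an isomorphism $\overline{V}/\langle\overline{v}_-\rangle\cong\widehat{HF}(\rp{3}\#\rp{3},\mathfrak{p})$ onto its torsion target; thus $g(\overline{v}_-)=0$. Using the grading shift formula again, with $\overline{v}_+$ in grading $1/2$, the only torsion targets that can receive $\overline{v}_+$ are the grading-zero structures $b$ and $c$, and both are hit; hence $g(\overline{v}_+)=b+c$. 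The cleanest way to see that $b$ and $c$ occur with equal (hence both nonzero) coefficient, rather than, say, only $b$, is the $\mathbb{Z}/2$ symmetry of the Kirby diagram exchanging the two $\rp{3}$ summands: it acts trivially on $\widehat{HF}(S^1\times S^2)$, exchanges $b\leftrightarrow c$ and fixes $a,d$, and commutes with both $f$ and $g$, so $g(\overline{v}_+)$ is a symmetric element of grading $0$, therefore $0$ or $b+c$, and it is nonzero by the isomorphism statement of Proposition \ref{prop2.3.3}(1).

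The main obstacle I expect is exactly this $\mathrm{Spin}^c$-and-grading bookkeeping: computing $H^2$ of the two cobordisms, the restriction maps to the two boundary components, and the values of $c_1(\mathfrak{s})^2$, so as to be certain that precisely the grading-zero structures $b,c$ appear (and with equal multiplicity) while $a,d$ do not. Everything else — the intersection-form and Euler-characteristic bookkeeping, the identification of $[L]$ as a generator, and the qualitative statements about kernels and quotients — is a formal consequence of Proposition \ref{prop2.3.3} once the diagrams of Figures \ref{fig:kirby}, \ref{fig:cobordism-a} and \ref{fig:cobordism-b} are in hand.
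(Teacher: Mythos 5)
Your proposal is essentially the paper's own proof: reduce to $k_s=1$, read off the intersection form from the linking matrices to get $b_2^{+}=b_2^{-}=0$ and hence a uniform degree shift of $-\tfrac12$, use the absolute gradings to see that only the grading-zero generators $b,c$ can be involved, and then apply Proposition \ref{prop2.3.3}, part (2) for $Z_a$ and part (1) for $Z_b$, with $\mathrm{Ker}[L]=\langle\overline{v}_-\rangle$. The only genuine deviation is at the last step for $Z_b$: the paper simply applies part (1) once for the $\mathrm{Spin}^c$ structure restricting to $b$ and once for the one restricting to $c$ and sums, whereas you invoke the $\mathbb{Z}/2$ symmetry exchanging the two $\rp{3}$ summands to argue $g(\overline{v}_+)\in\{0,b+c\}$ and then appeal to the isomorphism statement for nonvanishing; this is fine, but note that the nonvanishing of the total map still rests on the same per-$\mathrm{Spin}^c$ application (summed with odd multiplicity) that the paper makes explicit, so the symmetry buys little. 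One small factual slip: the red curve in diagram $(b)$ is a meridian of the $-1$-framed (green) component, and in $H_1(S^1\times S^2)\cong\mathbb{Z}$ its class is \emph{twice} the generator (as the chain linking matrix shows), not a generator; this is harmless, since the class is still non-torsion, part (1) still applies, and the kernel of its (integral) action is still $\langle\overline{v}_-\rangle$.
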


\begin{proof}
	Let's start with $Z_a$. Let's compute the degree shift of the cobordism map $F_{Z_a,\mathfrak{s}}$ on $\widehat{HF}$ first. We will use the results of Section $11$ in \cite{manolescu2010heegaard}, where homology classes and $\text{Spin}^c$ structures of the cobordism are described in terms of linking matrices.
	
	 Let \begin{equation*}
		\Lambda = 
		\begin{pmatrix}
			-2& 0 &1 \\
			0 & -2 & 1 \\
			1 & 1 & -1
		\end{pmatrix}
	\end{equation*}
denote the linking matrix for $(a)$ in Figure $\ref{fig:kirby}$, and denote the $i$th column of $ \Lambda$ by $\Lambda_i$. Then we have \[H_2(Z_a,\mathbb{Z}) = \{ u \in \mathbb{Z}^3 \vert \Lambda_1\cdot u = \Lambda_2\cdot u =0 \} = \langle (1,1,2) \rangle \cong \mathbb{Z},\] and the intersection form $H_2(Z_a,\mathbb{Z}) \times H_2(Z_a,\mathbb{Z}) \longmapsto \mathbb{Z}$ is represented by $\Lambda$ restricted to the span of $(1,1,2)$, which is $0$. Therefore, we have $b_2(Z_a) =1, $ and $b_2^+(Z_a) = b_2^-(Z_a)=0$. Then, for any $\text{Spin}^c$ structure $s$ on $Z_a$, we have \[deg(F_{Z_a},\mathfrak{s}) = \frac{c_1(\mathfrak{s})^2-2\chi(Z_a)-3\sigma(Z_a)}{4} = \dfrac{0-2-0}{4}=-\frac{1}{2}.\] So  non-trivial maps only happen from $\widehat{HF}(\rp{3}\#\rp{3},\mathfrak{t})$ to $\widehat{HF}(S^1\times S^2,\mathfrak{s}_0)$, where $\mathfrak{s}_0$ is the torsion $\text{Spin}^c$ structure on $S^1\times S^2$, and $\mathfrak{t}$ is a $\text{Spin}^c$ structure on $\rp{3}\#\rp{3}$ such that the corresponding generator has absolute grading $0$, $i.e.$ generators $b$ and $c$. 

Consider the $\text{Spin}^c$ structure $\mathfrak{t}$ corresponding to the generator $b$ first, and let $\mathfrak{s}$ be the $\text{Spin}^c$ structure on $Z_a$ which restricts to $\mathfrak{t}$ and $\mathfrak{s}_0$ on each boundary component. Then the pair $(Z_a,\mathfrak{s})$ satisfies the condition of Proposition $\ref{prop2.3.3}$, with $Y = \rp{3}\#\rp{3}$, and $K$ the red curve in $(a)$ of Figure $\ref{fig:kirby}$. The knot $K$ represents a torsion class in $H_1(\rp{3}\#\rp{3}) = \mathbb{Z}_2\oplus \mathbb{Z}_2$. Let $L$ be the core of the glued-in solid torus in $Y(K) = S^1\times S^2$, then $[L]$ is a generator of $H_1(S^1\times S^2)$, and the action of $[L]$ on $\widehat{HF}(S^1\times S^2,\mathfrak{s}_0) = \langle\overline{v}_+,\overline{v} \rangle $ is such that \[[L]\cdot \overline{v}_+ = \overline{v}_-, \,\,\, [L]\cdot \overline{v}_-=0.\] So $\text{Ker}([L]) = \langle \overline{v}_-\rangle$, and $F_{Z_a,\mathfrak{s}}: \widehat{HF}(\rp{3}\#\rp{3},\mathfrak{t}) \longmapsto \text{Ker}[L]$ is an isomorphism sending $b$ to $\overline{v}_-$ by Proposition $\ref{prop2.3.3}$. The case when $\mathfrak{t}$ corresponds to the generator $c$ is similar, which is an isomorphism sending $c$ to $\overline{v}_-$. Therefore by summing up these two maps, we get the map \begin{align*}
	f=F_{Z_a} :\widehat{HF}(\rp{3}\#\rp{3})& \longmapsto \widehat{HF}(S^1\times S^2)\\
	f(b)=f(c) = \overline{v}_-&,\,\, f(a)=f(d) =0.
\end{align*}  as stated in the proposition.

Now we consider the cobordism $Z_b$ as in $(b)$ of Figure $\ref{fig:kirby}$. This time the linking matrix is 
\begin{equation*}
	\Lambda = 
	\begin{pmatrix}
		-2& 0 &1 &0\\
		0 & -2 & 1 &0 \\
		1 & 1 & -1 &1 \\
		0 & 0 & 1 &0
	\end{pmatrix}.
\end{equation*}
 Let $\Lambda_i$ be the $i$th column of $\Lambda$. Then we have 
 \[H_2(Z_b,\mathbb{Z}) = \{ u \in \mathbb{Z}^4 \vert \Lambda_1\cdot u = \Lambda_2\cdot u  = \Lambda_3\cdot u=0 \} = \langle (1,1,2,0) \rangle \cong \mathbb{Z},\] and again the intersection form is given by $\Lambda$ restricted to the span of $(1,1,2,0)$, which again vanishes. So we get $b_2(Z_b) =1$ and $b_2^+(Z_b) = b_2^-(Z_b)=0$. For any $\text{Spin}^c$ structure $\mathfrak{s}$ on $Z_b$, we again have $deg(F_{Z_b,\mathfrak{s}}) = -1/2$. Therefore, non-trivial maps only happen from $\widehat{HF}(S^1\times S^2,\mathfrak{s}_0)$ to $ \widehat{HF}(\rp{3}\#\rp{3},\mathfrak{t})$, where $\mathfrak{t}$ is again a $\text{Spin}^c$ structure on $\rp{3}\#\rp{3}$ corresponds to the generator $b$ or $c$ of absolute grading $0$.
 
Consider the $\text{Spin}^c$ structure $\mathfrak{t}$ corresponding to the generator $b$ first, and let $\mathfrak{s}$ be the $\text{Spin}^c$ structure on $Z_b$ which restricts to $\mathfrak{s}_0$ and $\mathfrak{t}$ on each boundary component. The pair $(Z_b,\mathfrak{s})$ satisfies the condition of Proposition $\ref{prop2.3.3}$ with $Y = S^1\times S^2$ and $K$ is the red curve in $(b)$ of Figure $\ref{fig:kirby}$. This time $[K]$ is twice the generator of $H_1(S^1\times S^2) = \mathbb{Z}$, which is non-torsion. According to 
Proposition $\ref{prop2.3.3}$, we have an isomorphism \[F_{Z_b,\mathfrak{s}}: \widehat{HF}(S^1\times S^2,\mathfrak{s}_0)/\text{Ker}[K] \longmapsto \widehat{HF}(\rp{3}\#\rp{3}, \mathfrak{t}),\]  where $\widehat{HF}(S^1\times S^2,\mathfrak{s}_0)/\text{Ker}[K] = \langle \overline{v}_+\rangle$. So $F_{Z_b,\mathfrak{s}}$ sends $\overline{v}_+$ to $b$. The case when $\mathfrak{t}$ corresponds to the generator $c$ is similar. By summing up these two maps, we get \begin{align*}
	g=F_{Z_b}:\widehat{HF}(S^1\otimes S^2)& \longmapsto \widehat{HF}(\rp{3}\times \rp{3})\\
	g(\overline{v}_+) = b+c&,\,\, g(\overline{v}_-) = 0.
\end{align*} 
\end{proof}

Combining Proposition $\ref{prop 2.3.1}$ and Proposition $\ref{prop2.3.4}$ and the description of the link spectral sequence of $\widehat{HF}$ relating to the cube of resolutions at the end of Section $\ref{sec2.2}$, we get the following:

\begin{theorem}
	Let $L$ be a null homologous link in $\rp{3}$. There is a spectral sequence whose $E^2$ terms consists of the Khovanov-type homology $\widetilde{\mathit{Kh}}^{\alpha_{\textit{HF}}}(m(L))$ of the mirror of $L$ with the dyad $\alpha_{HF} = (W,\overline{V},f,g)$ introduced in part ($\ref{ahf}$) of Section 1.6, which converges to the Heegaard Floer homology $\widehat{HF}(\Sigma_0(\rp{3},L))$ of the even branched double cover $\Sigma_0(\rp{3},L)$ of $\rp{3}$. 
\end{theorem}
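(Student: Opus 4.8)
The plan is to run the Ozsv\'ath--Szab\'o link surgery spectral sequence of Theorem \ref{th2.2.3} for the closed $3$-manifold $Y=\Sigma_0(\rp{3},L)$, with respect to the framed link whose components $l_i$ are the preimages in $Y$ of the vertical arcs joining the two double points at the $i$th crossing of the projection $L$, each framed so that the $1$-surgery on $l_i$ yields the branched double cover of the $0$-smoothed diagram at that crossing, exactly as in Lemma \ref{lem2.1.5}. Theorem \ref{th2.2.3} then produces a spectral sequence with $E^1=\bigoplus_{I\in\{\alpha,\beta\}^n}\widehat{HF}(Y(I))$ converging to $\widehat{HF}(Y)$, whose $d_1$ is the sum of the two-handle cobordism maps $\widehat f_*$ over immediate successors. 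Convergence is thus automatic; the work is in identifying the $E^2$ page.

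First I would identify, for each multi-framing $I\in\{\alpha,\beta\}^n$ and its associated state $s=\phi(I)$ (where $\phi$ sends $\alpha\mapsto1$ and $\beta\mapsto0$), the manifold $Y(I)$ with $\Sigma(\rp{3},L_s,C_P)$ for a base point $P\in R_0$, using the local model of Lemma \ref{lem2.1.5} inside each crossing ball. By Lemma \ref{lem2.1.3} this is $(\rp{3}\#\rp{3})\#(S^1\times S^2)^{\#(k_s-1)}$ when $e_s(P)=0$ and $(S^1\times S^2)^{\#k_s}$ when $e_s(P)=1$, so $\widehat{HF}(Y(I))$ is precisely the vector space $\widetilde{\mathit{CKh}}^{\alpha_{\textit{HF}}}_{\phi(I)}(m(L))$ attached to that state in our cube of resolutions; the mirror $m(L)$ appears because $\phi$ turns $0$-smoothings into $1$-smoothings. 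Bundling these gives an isomorphism of $E^1$ with the underlying chain groups of $\widetilde{\mathit{CKh}}^{\alpha_{\textit{HF}}}_{\bullet}(m(L))$.

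Next I would check that $d_1$ matches the Khovanov-type differential edge by edge. For an edge corresponding to a $2\to1$ or $1\to2$ bifurcation, this is Proposition \ref{prop 2.3.1}: the isomorphisms $\Psi_s$ built from the algebra map $\xi_s$ intertwine the surgery cobordism map with the multiplication/comultiplication $m,\Delta$ on the trivial $V$-(co)module $V_{e_s(P)}$. For an edge corresponding to a $1\to1$ bifurcation, the reduction to $k_s=1$ at the beginning of Section \ref{sec2.3}, together with the Kirby calculus of Figures \ref{fig:cobordism-a}, \ref{fig:cobordism-b} and \ref{fig:framing-of-cobordism-b}, presents the relevant cobordism as $Z_a$ or $Z_b$, and Proposition \ref{prop2.3.4} computes the induced map on $\widehat{HF}$ to be exactly $f$ or $g$ of the dyad $\alpha_{\textit{HF}}$, tensored with the identity on the unchanged $(S^1\times S^2)^{\#(k_s-1)}$ factor. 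Matching all three cases shows the identification of $E^1$ with the chain groups carries $d_1$ to the differential of $\widetilde{\mathit{CKh}}^{\alpha_{\textit{HF}}}_{\bullet}(m(L))$, hence $E^2\cong\widetilde{\mathit{Kh}}^{\alpha_{\textit{HF}}}(m(L))$; working over $\mathbb{F}_2$ removes all sign bookkeeping.

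I expect the main obstacle to be the $1\to1$ case. One must be careful that the framings in Figure \ref{fig:kirby} are correct (the continued-fraction and cobordism-composition arguments in the proofs of the Kirby-diagram lemmas), that the $\mathrm{Spin}^c$-degree-shift computation indeed forces the cobordism map to act only between the absolute-grading-$0$ generators $b,c$ of $\widehat{HF}(\rp{3}\#\rp{3})$ and the torsion summand of $\widehat{HF}(S^1\times S^2)$, and that summing over $\mathrm{Spin}^c$ structures on the cobordism genuinely reproduces $f$ and $g$; this is where Proposition \ref{prop2.3.3} does the heavy lifting. A secondary point is verifying that the choice of the even branched cover at the all-$\beta$ state, together with the $e_s(P)$-dependent choice of cover $\Sigma_0$ or $\Sigma_1$ at every intermediate state, is compatible with the surgery triads of Lemma \ref{lem2.1.5}, so that the Ozsv\'ath--Szab\'o machinery applies verbatim; since this is local and governed by how $e_s(P)$ changes under bifurcations, it follows from the analysis already carried out in Section \ref{sec1.2}.
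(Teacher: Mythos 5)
Your proposal is correct and follows essentially the same route as the paper's own proof: run the Ozsv\'ath--Szab\'o link surgery spectral sequence on the preimages of the crossing arcs in $\Sigma_0(\rp{3},L)$, identify the $E^1$ page with the chain groups of $\widetilde{\mathit{CKh}}^{\alpha_{\textit{HF}}}_{\bullet}(m(L))$ via Lemma \ref{lem2.1.3}, and match $d_1$ with the Khovanov-type differential using Proposition \ref{prop 2.3.1} for the $2\to1$ and $1\to2$ bifurcations and Proposition \ref{prop2.3.4} for the $1\to1$ bifurcation. Your identification of where the real work lies (the Kirby-calculus framing checks and the $\mathrm{Spin}^c$ degree-shift argument behind Proposition \ref{prop2.3.4}) is also where the paper concentrates its effort.
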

\begin{proof}
	 The proof is similar to that of Theorem \ref{theorem2.2.4}. See the discussion at the end of Section \ref{sec2.2}. For each double point in a link projection of $L$ to $\rp{2}$, we associate an knot $K_i$ in $\Sigma(\rp{3}, L,C_P)$ to it, which is the branched double cover of a vertical arc connecting the double points. Consider the link spectral sequence of $\widehat{HF}$ of the link in $\Sigma(\rp{3}, L,C_P)$ consisting of all the $K_i$. For each smoothing $L_s$ of the link projection of $L$, we have the  branched double cover $\Sigma(\rp{3},L_s,C_P)$ equal to $(\rp{3}\#\rp{3})\# (S^1\times S^2)^{\#(k_s-1)}$ if $e_s(P)=0$, and equal to $(S^1\times S^2)^{\#k_s}$ if $e_s(P)=1$. Hence, the $E^1$ terms in the link surgery spectral sequence, which are the Heegaard Floer homology of the corresponding branched double covers, are the same as $\widetilde{CKh}_{\alpha_{HF}}(m(L))$ as vector spaces. The differential $d_1$ in the spectral sequence has been computed in Proposition \ref{prop 2.3.1} and Proposition \ref{prop2.3.4}, which is the same as the differential $d$ in the chain complex $\widetilde{CKh}_{\alpha_{HF}}(m(L))$. Therefore, $E^2$ terms of the link spectral sequence are the Khovanov-type homology $\widetilde{Kh}^{\alpha_{HF}}(m(L))$. 
\end{proof}
\begin{remark}
	There is another spectral sequence converging to the Heegaard Floer homology of the odd branched double cover $\widehat{HF}(\Sigma_1(\rp{3},L))$, whose $E^2$ terms consists of $\widetilde{\mathit{Kh}}^{\alpha^*_{\textit{HF}}}(m(L))$, where $\alpha^*_{\textit{HF}} = (\overline{V},W,g,f)$ is the dual dyad of $\alpha$. 
\end{remark}
\bibliographystyle{amsalpha}
\bibliography{biblio}

\end{document}